  \numberwithin{equation}{section}
\tikzstyle{e} = [ellipse, minimum width=1cm, minimum height=0.5cm,text centered, draw=black]
\tikzstyle{arrow} = [thick,-,>=stealth]
\tikzstyle{connection}=[inner sep=0,outer sep=0]
\tikzset{
    vertl/.style={anchor=south, rotate=90, inner sep=.4mm, pos=0.4}
}
\tikzset{
    vertr/.style={anchor=north, rotate=90, inner sep=.7mm, pos=0.4}
}
\tikzset{
    diag/.style={anchor=south, rotate=30, inner sep=.5mm}
}
\tikzset{
    ddiag/.style={anchor=south, rotate=-16, inner sep=0mm}
}
    \author[J. Baudin]{Jefferson Baudin}
  \address{\'Ecole Polytechnique F\'ed\'erale de Lausanne, Chair of Algebraic Geometry \newline 
    \indent MA C3 575 (Bâtiment MA), Station 8, CH-1015 Lausanne}
  \email{jefferson.baudin@epfl.ch}
  \author[Zs. Patakfalvi]{Zsolt Patakfalvi}
  \address{\'Ecole Polytechnique F\'ed\'erale de Lausanne, Chair of Algebraic Geometry \newline 
    \indent MA C3 635 (Bâtiment MA), Station 8, CH-1015 Lausanne}
  \email{zsolt.patakfalvi@epfl.ch}
   \author[L. Rösler]{Linus Rösler}
  \address{\'Ecole Polytechnique F\'ed\'erale de Lausanne, Chair of Algebraic Geometry \newline 
    \indent MA C3 615 (Bâtiment MA), Station 8, CH-1015 Lausanne}
  \email{linus.rosler@epfl.ch}
  \author[M. Zdanowicz]{Maciej Zdanowicz}
  \email{m.zdanowicz@gmail.com}
  \title[On Gorenstein $\bQ_p$--rational threefold and fourfold singularities]{On Gorenstein $\bQ_p$--rational threefold and fourfold singularities}
  \subjclass[2020]{Primary 14G17, 14B05, 14F30, Secondary 14J30, 14J35} 
  \keywords{
    rational singularities, dual complexes, $p$-adic cohomology theories}
   \def\MR#1{}
\begin{document}

\begin{abstract}
    We prove that for $n \leq 4$ and $p > 5$, quasi--Gorenstein $F$--pure and $\bQ_p$--rational $n$--fold singularities are canonical. This is analogous to the usual fact that rational Gorenstein singularities are canonical. The proof is based on a careful analysis of the dual complex of a dlt modification of a log canonical singularity. The result for $n = 4$ is contingent upon the existence of log resolutions.
\end{abstract} 

\maketitle


\tableofcontents

\section{Introduction}

Recall that a normal variety $X$ over a field $k$ is said to have \emph{rational singularities} if it is Cohen--Macaulay and it admits a resolution of singularities $\pi \colon Y \to X$ such that for all $i > 0$, $R^i\pi_*\cO_Y = 0$.

In characteristic zero, an important theorem of \cite{Elkik_Rationalite_des_singularites_canoniques} says that Kawamata log terminal (klt, for short) singularities are rational, which in turn has many applications. The converse of this statement fails, as shown by taking a cone over an Enriques surface. Nevertheless, there is a well--known partial converse: rational and Gorenstein normal singularities are canonical (see  \autoref{main_thm:classical_statement}). In fact, this holds in any characteristic.

An important issue is that in positive characteristic, klt singularities need not be rational, or even Cohen--Macaulay (see e.g. \cite{Yasuda_Discrepancies_of_p-cyclic_quotient_varieties, Hacon_Witaszek_On_the_rationality_of_klt_sings_in_pos_char, Totaro_The_failure_of_Kodaira_vanishing_for_Fano_varieties__and_terminal___singularities_that_are_not_Cohen-Macaulay, Arvidsson_Bernasconi_Lacini_On_the_Kawamata-Viehweg_vanishing_theorem_for_log_del_Pezzo_surfaces_in_positive_characteristic, Totaro_Terminal_3folds_that_are_not_CM, Posva_Pathological_MMP_singularities_as_alpha_p_quotients}). Luckily, there exist natural weakenings of the notion of rational singularities: $W\cO$--rational (\cite[Definition 4.4.4]{Rulling_Chatzistamatiou_Hodge_Witt_and_Witt_rational}), $\bQ_p$--rational (\cite[Section 3.4]{Patakfalvi_Zdanowicz_Ordinary}) or $\bF_p$--rational (\cite[Definition 3.4]{Baudin_Bernasconi_Kawakami_Frobenius_GR_fails}) singularities. The idea is that the role of $\cO_Y$ in the definition of rational singularities is replaced by $W\cO_{Y, \bQ}$, $\bQ_{p, Y}$ or $\bF_{p, Y}$. Interestingly, these weaker notions are known to hold for klt singularities up to dimension $4$ (\cite{Gongyo_Nakamura_Tanaka_Rational_points_on_log_Fano_threefolds_over_a_finite_field,Hacon_Witaszek_On_the_relative_MMP_for_4folds_in_pos_and_mixed_char, Baudin_Bernasconi_Kawakami_Frobenius_GR_fails}). 

This led us to search for partial converses of the latter implications. Note that since klt singularities may not be Cohen--Macaulay, it seems excessive to impose Cohen--Macaulayness assumptions for this converse (see \autoref{cor:main_thm_under_F_p-CM}). Hence, we relax the Gorenstein hypothesis by imposing that our variety $X$ is quasi--Gorenstein (i.e.\ the Weil divisor $K_X$ is Cartier).

\begin{questionintro}\label{quest:WO-rational_canonical}
    Are $W\cO$--rational (resp.\ $\bF_p$--rational), quasi--Gorenstein normal singularities canonical?
\end{questionintro}

Unfortunately, the answer is negative for both cases of this question, as shown by taking a cone over a supersingular K3 surface. Therefore, some ordinarity condition has to be added, for which we chose $F$--purity:

\begin{questionintro}\label{quest:WO-rational_F-pure_canonical}
    Are $W\cO$--rational (resp.\ $\bF_p$--rational), quasi--Gorenstein normal \textbf{and \textit{F}--pure} singularities canonical?
\end{questionintro}

For $\bF_p$--rationality, we have an immediate positive answer in every dimension:

\begin{propletter}[{\autoref{main_thm_for_F_p-rational}}]\label{prop_intro}
    Let $X$ be a normal, $F$--pure, quasi--Gorenstein variety with $\bF_p$--rational singularities. Then $X$ has canonical singularities.
\end{propletter}

Let us turn now to the question about $W\cO$--rational singularities. It turns out that with the $F$--pure assumption, we only need $\bQ_p$--rational singularities to make our proof work (at least up to dimension $4$). Recall the following diagram relating these different types of singularities:
\begin{align*}
\fbox{$\bF_p$--rational}  \Rightarrow \fbox{$\bQ_p$--rational}
\Leftarrow \fbox{$W\cO$--rational} 
\end{align*}

Let us point out that both converses of the implications above fail (see also \cite[Examples 3.14 and 3.15]{Baudin_Bernasconi_Kawakami_Frobenius_GR_fails}): 
\begin{itemize}
    \item a cone over a singular Enriques surface in characteristic $2$ has $\bQ_p$--rational singularities but not $\bF_p$--rational singularities;
    \item a cone over a supersingular elliptic curve has $\bQ_p$--rational singularities but not $W\cO$--rational singularities.\medskip
\end{itemize}

Our main results are the following affirmative answers up to dimension four:

\begin{thmletter}[{\autoref{main_thm_threefolds}}]\label{Theorem_B}
    Let $X$ be a normal, quasi--Gorenstein, $F$--pure threefold with $\bQ_p$--rational singularities over an $F$--finite field of characteristic $p \neq 2$. Then $X$ has canonical singularities.
\end{thmletter}

\begin{thmletter}[{\autoref{main_thm_foufolds}}]\label{Theorem_C}
    Let $X$ be a normal, quasi--Gorenstein $F$--pure fourfold with $\bQ_p$--rational singularities over a perfect field of characteristic $p > 5$. Assume furthermore that $X$ satisfies \autoref{hyp}. Then $X$ has canonical singularities.
\end{thmletter}

\begin{remintro}\label{remintro}
    \begin{itemize}
        \item \autoref{hyp} is a hypothesis about existence of log resolutions.
        \item The assumption $p \neq 2$ in \autoref{Theorem_B} is optimal, as the result fails for $p = 2$ (see \autoref{example:singular_enriques_surfaces}).
        \item It could be that in the case of $W\cO$--rational singularities, we could weaken the $F$--pure assumption to a quasi--$F$--pure assumption. This would make a lot of sense because $F$--splitting tends to give non--vanishing of $\bQ_p$--cohomology for Calabi--Yau varieties, i.e.\ the slope zero part of $W\cO$-cohomology (see e.g. \autoref{lem:wo_o_cohomology_ordinary}). On the other hand, in the case of $W\cO$--rational singularities, having non--vanishing of $W\cO$--cohomology would be enough. It turns out that quasi--$F$--splitting tends to ensure this (\cite{Yobuko_Quasi_F_splitting_and_lifting_of_CY}), which is why we believe this would be a more natural setup to work with.
        \item In the case of $\bF_p$--rational singularities, quasi--$F$--purity is really not enough, as shown by taking a cone over a supersingular elliptic curve.
     \end{itemize}
\end{remintro}

With the above remark in mind, it would be interesting to have an answer to the following question:

\begin{questionintro}\label{quest:WO-rational_quasi-F-pure_canonical}
    Let $X$ be a normal, quasi--$F$--pure threefold with $W\cO$--rational singularities over a perfect field of characteristic $p \neq 2$. Does $X$ have canonical singularities?
\end{questionintro}

Here is a diagram summarizing the whole picture in positive characteristic:

\[\begin{tikzcd}[arrows=Rightarrow]
	&&& {\fbox{$\bF_p$--rational}} \\
	{\fbox{canonical}} && {\fbox{klt}} && {\fbox{$\bQ_p$--rational}} \\
	&&& {\fbox{$W\cO$--rational}}
	\arrow["(n\leq 4)", from=2-3, to=1-4]
	\arrow[from=1-4, to=2-5]
	\arrow["{(n\leq 4)}"', from=2-3, to=3-4]
	\arrow[from=3-4, to=2-5]
	\arrow["{\left(\substack{\text{\autoref{prop_intro}:}\\\text{quasi--Gor.}\\F\text{--pure}}\right)}"'{pos=0.15}, bend right=25, from=1-4, to=2-1]
	\arrow["{\left(\substack{\text{\autoref{quest:WO-rational_quasi-F-pure_canonical}:}\\\text{quasi--Gor.}\\\text{quasi--}F\text{--pure}\\n\leq3,\ p \neq 2}\right)}"{pos=0.17}, "\color{red}{\ ???\ }"' {ddiag,description,font=\large}, bend left=20, dashed, from=3-4, to=2-1]
	\arrow["{\left(\substack{\text{\autoref{Theorem_B}, \autoref{Theorem_C}:}\\\text{quasi--Gor, $F$--pure}\\n\leq 4,\ p>5,\ \text{Hyp.\ref{hyp}}}\right)}"', bend right=75, from=2-5, to=2-1]
	\arrow[from=2-1, to=2-3]
	\arrow["{(\text{quasi--Gor.})}"'{pos=0.57}, bend right=20, from=2-3, to=2-1]
    \arrow[bend left=20, "\times"{anchor=center,sloped,description,font=\large}, from=2-5, to=1-4]
    \arrow[bend right=20, "\times"{anchor=center,sloped,description,font=\large}, from=2-5, to=3-4]
\end{tikzcd}\]

\subsection{Strategy of proofs}

Let us discuss the ideas involved in the proofs of the main theorems. Denote by $\cP_n$ the assertion that every $n$--fold $X$ which is normal, quasi--Gorenstein, $F$--pure, with $\bQ_p$--rational singularities, and defined over an $F$--finite field, has canonical singularities. Showing $\cP_n$ is naturally inductive in $n$: if $\cP_{n-1}$ is established, then by localization, proving $\cP_n$ reduces to excluding isolated, strictly log canonical $n$--fold singularities $x\in X$ which satisfy the given hypotheses.

It is then natural to consider some kind of resolution $\pi\colon Y\to X$ and study the exceptional divisor $E$ over $x$. For surfaces, this will be the \emph{minimal resolution} (to keep $E$ as simple as possible). A higher--dimensional analogue is the \emph{dlt modification} (see \autoref{existence_dlt_modifications}). An important property of these resolutions will be that they satisfy $K_Y+E\sim\pi^{*}K_X$. If we assume for simplicity that $X$ is defined over an algebraically closed field, then $\bQ_p$--rationality is equivalent to $E$ having vanishing higher $\bQ_p$--cohomology. Hence, if we can show that $E$ must have some non--vanishing higher $\bQ_p$--cohomology, then we arrive at the desired contradiction.

A rough idea is the following: by adjunction, $E$ has trivial canonical bundle, so in particular we have $H^{\dim E}(E,\cO_E)\neq 0$. We want to somehow descend this non--vanishing to $\bQ_p$--cohomology, and for this we need $E$ to be arithmetically well--behaved, i.e.\ that the Frobenius acts nicely. This is where the assumption of $F$--purity comes into play: by $F$--adjunction, it will ensure that $E$ is globally $F$--split (or more precisely this is the case of its weak normalization, but let us ignore this issue during the introduction). When $X$ is a surface (i.e.\ $n = 2$), this is already enough to conclude the proof. 

Let us now discuss what happens for $n=3,4$. Interestingly, the threefold and fourfold cases are of very different flavour.\medskip

\emph{The threefold case:} For $X$ a threefold, $E=\bigcup_i E_i$ is a union of integral surfaces. The $\bQ_p$--cohomology of $E$ is then made up of two things: the $\bQ_p$--cohomology of the individual components $E_i$, and the singular cohomology of the dual complex $\cD(E)$. The latter is a topological space encoding how the $E_i$'s are glued together to form $E$. By carefully describing the individual components using adjunction and classical surface arguments (\autoref{lem:dlt_blowup_gorenstein_comps_of_exc} and \autoref{lem:classification_dlt_surfaces}), and then deducing restrictions on the global structure of the dual complex (\autoref{prop:dlt_blowup_gorenstein_dual_complex}), we obtain a list of four cases which need to be checked: $\cD(E)$ is a point, a cycle, a path graph, or a $2$--dimensional compact connected topological manifold. Looking at $\bQ_p$--cohomology then automatically eliminates the first three cases (provided that $p \neq 2$), and yields that the compact topological surface in the last case must be the real projective plane $\bR\bP^2$. However, it turns out that $\bQ_p$--cohomology alone is not enough to finish the proof. Luckily, through the global $F$--splitness of $E$ we also have a lot of information about its $\bF_p$--cohomology, and we know in particular that $H_{\et}^2(E,\bF_p)\neq 0$. This allows us to conclude that the real projective plane case cannot happen.\medskip

\emph{The fourfold case:} Note that $E$ is an odd--dimensional scheme with trivial canonical bundle. We can then show that $E$ is Cohen--Macaulay using results of \cite{Bernasconi_Kollar_Vanishing_theorems_for_threefolds_in_char_p}, implying that $\chi(E,\cO_E)=0$ by Serre duality. On the other hand, by $\bQ_p$--rationality we have in particular that $\chi_{\et}(E,\bQ_p)=1$. The key trick is to use then that $\chi_{\et}(E,\bQ_p)=\chi_{\et}(E,\bF_p)$ (see \autoref{lem:euler_char}). By the global $F$--splitness of $E$, the latter is actually equal to the usual Euler characteristic $\chi(E,\cO_E)$, so we obtain a contradiction.  

\subsection{Notation and basic definitions}

\begin{itemize}
    \item For $n \geq 1$ an integer, we write $[n] \coloneqq \{1, \dots, n\}$.

    \item An $\bF_p$--scheme $X$ is $F$--finite if its absolute Frobenius morphism is finite.

    \item A variety is a scheme $X$, separated and essentially of finite type over an $F$--finite field of characteristic $p>0$.

    \item We say that a variety is a curve (resp.\ surface, resp.\ $n$--fold) if it has dimension one (resp.\ two, resp.\ $n$).

    \item If $X$ is a variety over a field $k$ with structural morphism $f \colon X \to \Spec k$, we define $\omega_X^{\bullet} \coloneqq f^!\cO_{\Spec k}$ (\cite[5.2]{Nayak_Compactification_for_essentially_finite_type_maps}). We also define the canonical sheaf $\omega_X$ as the first non--zero cohomology sheaf of $\omega_X^{\bullet}$. If $X$ is normal, we fix a Weil divisor $K_X$, such that $\cO_X(K_X) \cong \omega_X$. Such a divisor is called a \emph{canonical divisor}.
    
    \item A variety is \emph{quasi--Gorenstein} if its canonical module $\omega_X$ is a line bundle.

    \item Given a scheme $X$, its reduction is denoted $X_{\red}$. If $X$ is integral, then $X^{\nu}$ denotes its normalization.

    \item If $X$ is an excellent Noetherian scheme, we denote its weak normalization (see \cite[Definition I.7.2.1]{Kollar_Rational_curves_on_algebraic_varieties}) by $\wn \colon X^{\wn} \to X$.

    \item A universal homeomorphism is a morphism of schemes which is a homeomorphism after arbitrary base change. Important examples are the reduction map, a purely inseparable morphism of normal schemes or the weak normalization (see \autoref{lem:weak_nor_univ_homeo}).

    \item Let $X$ be a normal $F$--finite $\bF_p$--scheme. Then $X$ is globally $F$--split (resp.\ $F$--pure) if the $p$--power map $F \colon \cO_X \to F_*\cO_X$ splits (resp.\ locally splits).

    More generally, given an effective Weil divisor $\Delta$ on $X$, we say that the pair $(X, \Delta)$ is globally sharply $F$--split (resp.\ sharply $F$--pure) if there exists $e > 0$ such that the composition  morphism  
    \[\begin{tikzcd}
        \cO_X \arrow[rr, "F^e"] &  & F^e_*\cO_X \arrow[rr, "\inc"] &  & F^e_*\cO_X((p^e - 1)\Delta)
    \end{tikzcd}\] splits (resp.\ locally splits).

    \item On an $\bF_p$--scheme $X$, we define the perfection of $\mathcal{O}_X$ as 
    \[ \cO_X^{1/p^{\infty}} \coloneqq \varinjlim \: (\cO_X \xrightarrow{F} F_*\cO_X \xrightarrow{F} F^2_*\cO_X \xrightarrow{F} \dots) \] (see \cite{Bhatt_Lurie_A_Riemann-Hilbert_correspondence_in_positive_characteristic}). 

    \item Given a scheme $X$ in characteristic $p > 0$ and $n \geq 1$, the sheaf $W_n\cO_X$ denotes the usual $n$-truncated Witt vectors over $\cO_X$ (see e.g. \cite[Section 2.6]{Serre_Local_fields}).
    
\end{itemize}


\subsection{Acknowledgements}

We would like to thank Fabio Bernasconi, Stefano Filipazzi, Léo Navarro Chafloque and Jakub Witaszek for useful discussions or comments on the content of this article. We would also like to thank the anonymous referee for their careful reading and thoughtful comments to improve the exposition of the article.

JB and LR were partly supported by the  project grant \#200020B/192035 from the Swiss National Science
Foundation (FNS), as well as by the ERC starting grant \#804334. MZ was partially supported by the FNS project grant \#200021/169639.
ZsP was partially supported by FNS project grants \#200020B/192035 and \#200021/169639, as well as by the ERC starting grant \#804334.



\section{Preliminaries}

\subsection{Minimal model program, dlt singularities and weak normalizations}\label{subsection:MMP}

Let $X$ be a normal variety, and let $\Delta$ be an effective $\bQ$--divisor with coefficients $\leq 1$ such that $K_X + \Delta$ is $\bQ$--Cartier. We recall the definition of singularities for the pair $(X, \Delta)$ according to the MMP.
Given a proper birational morphism $\pi \colon Y \to X$ with $Y$ normal, we choose a canonical divisor $K_Y$ such that $\pi_*K_Y=K_X$, and we write
$$ K_Y + \pi_*^{-1}\Delta = \pi^*(K_X+\Delta)+\sum_i a(E_i, X, \Delta) E_i, $$
where $E_i$ run through the prime exceptional divisors and $\pi_*^{-1}\Delta $ is the strict transform of $\Delta$. 
The coefficient $a(E, X, \Delta)$ is called the \emph{discrepancy} of $(X, \Delta)$ along $E$, and it depends exclusively on the divisorial valutation associated to $E$.

\begin{definition}
    We say that $(X, \Delta)$ is \emph{canonical} (resp.\ klt, resp.\ log canonical) if for every $E$ as above, $a(E, X, \Delta) \geq 0$ (resp.\ $a(E, X, \Delta) > -1$ and $\lfloor \Delta \rfloor = 0$, resp.\ $a(E, X, \Delta) \geq -1$). 
    
    We say that $(X, \Delta)$ is \emph{strictly log canonical} if it is log canonical and not klt.
\end{definition}

Let us now go to dlt singularities. Given a log canonical pair $(X, \Delta)$, we say that a divisor $E$ on a birational model of $X$ is a \emph{log canonical place} if it is a divisorial valuation of discrepancy $-1$. The image of $E$ on $X$ is called a \emph{log canonical center}, and denoted $\cent_X(E)$.

\begin{definition}
    We say that $(X, \Delta)$ is \emph{dlt} if $(X, \Delta)$ is log canonical, and for every log canonical place $E$, the generic point of $\cent_X(E)$ is contained in the snc locus of $(X, \Delta)$.
\end{definition} 

Dlt singularities will show up because we want to take \emph{dlt modifications} (see \autoref{existence_dlt_modifications}), which is roughly a higher--dimensional analogue of minimal resolutions of surfaces. When dealing with fourfolds, we will need the following hypothesis on the existence of resolutions of singularities.

\begin{hypothesis}
\label{hyp}
Following \cite{Hacon_Witaszek_On_the_relative_MMP_for_4folds_in_pos_and_mixed_char}, we assume that, given an integral fourfold $X$ over a perfect field $k$, log resolutions of all log pairs with the underlying variety being birational to $X$ exist and are given by a sequence of blowups along the non--snc locus.
\end{hypothesis}

\begin{thm}\label{existence_dlt_modifications}
    Let $n \leq 4$, and let $X$ be a log canonical $n$--fold. If $n = 4$, assume that $X$ is of finite type over a perfect field $k$ of characteristic $p > 5$ and satisfies \autoref{hyp}.

    Then there exists a dlt pair $(Y, E = \sum_i E_i)$, and a birational morphism $\pi \colon Y \to X$, such that 
    \[ K_Y + E \sim_{\bQ} \pi^*K_X ,  \] where all $E_i$'s are $\bQ$--Cartier prime divisors, and $E = \bigcup_i E_i$ is the exceptional locus of $\pi$.
\end{thm}

\begin{remark}
    \begin{itemize}
        \item The pair $(Y, E)$ above is called a \emph{dlt modification} of $X$.
        \item As will be clear from the proof, if $X$ was already quasi--Gorenstein to begin with (i.e.\ $K_X$ is Cartier), then we actually have $K_Y + E \sim \pi^*K_X$.
    \end{itemize}
\end{remark}

\begin{proof}
    Note that since we did not assume that $X$ was $\bQ$--factorial, we will need to use the non--$\bQ$--factorial MMP of Koll\'ar's, where we do not only contract extremal rays, but also extremal faces (see \cite{Kollar_MMP_without_Q_factoriality}).

    We will only prove the result for $n = 4$. The cases $n \leq 2$ are standard, and for $n = 3$, replace in the proof below the use of \cite[Theorem 2.20]{Baudin_Bernasconi_Kawakami_Frobenius_GR_fails} by \cite[Theorem 9]{Kollar_MMP_without_Q_factoriality}.

    Let $f \colon Z \to X$ be a log resolution, with exceptional divisor $E' = \Exc(f) = \bigcup_i E'_i$. By \cite{Kollar_Witaszek_resolution_with_ample_divisor}, we may assume that $E'$ supports an $f$--ample divisor. Hence, we can find a divisor $H$ satisfying the assumptions (iii), (iv) and (v) of \cite[Theorem 2.20]{Baudin_Bernasconi_Kawakami_Frobenius_GR_fails} with the pair $(Z, E')$. Since (i) and (ii) are satisfied by our hypotheses, we can apply \emph{loc.\ cit.} to run a non--$\bQ$--factorial $(K_Z + E')$--MMP over $X$. We then end up with a dlt pair $(Y, E)$ and a birational projective morphism $\pi \colon Y \to X$. Let us list the important properties of our dlt modification, all of which follow from \emph{loc.\ cit.}:
    
    \begin{enumerate}
        \item\label{exc_supports_ample} $\Exc(\pi)$ supports an anti--effective exceptional $\pi$--ample divisor $H$;
        \item \label{div_part_of_exc_is_E} the divisorial part of $\Exc(\pi)$ is exactly $E$ (this follows by definition of the MMP--steps);
        \item each irreducible component of $E$ is $\bQ$--Cartier.
    \end{enumerate}

    It follows from \autoref{exc_supports_ample} and \autoref{div_part_of_exc_is_E} that in fact, $E = \Exc(\pi)$. Indeed, if $C$ is a contracted irreducible curve, then $H\cdot C > 0$, so by \autoref{exc_supports_ample} there is some irreducible component $E_i$ of $E$ such that $E_i \cdot C < 0$, whence $C \subseteq E_i$.

    Since $X$ is log canonical and $E = \Exc(\pi)$, we know that \[ K_Y + E \sim_{\bQ} \pi^*K_X + F, \] with $F$ an effective exceptional $\bQ$--divisor. Since it must be $\pi$--nef by definition of a relative minimal model, we conclude that $F = 0$ by the negativity lemma. 
\end{proof}

\begin{remark}
    We cited \cite{Baudin_Bernasconi_Kawakami_Frobenius_GR_fails} (see also \cite{Kollar_MMP_without_Q_factoriality}) because we want to work with varieties which are not necessarily $\bQ$--factorial. If we stick to $\bQ$--factorial ones, then the right references become \cite{Hacon_Witaszek_On_the_relative_MMP_for_threefolds_in_low_char} for threefolds, and \cite{Hacon_Witaszek_On_the_relative_MMP_for_4folds_in_pos_and_mixed_char} for fourfolds.
\end{remark}

Let us now study dlt pairs in greater detail.  In the proofs of the main theorems, we will be interested in the $\bQ_p$-- and $\bF_p$--cohomology of $E$ where $(Y,E)$ is a dlt modification of $X$. Note that log canonical centers of dlt pairs may not be normal (\cite{Cascini_Tanaka_Purely_log_terminal_3folds_with_non_normal_centers_in_char_2, Bernasconi_Non-normal_purely_log_terminal_centres}), which at first sight might seem problematic. However, since $\bQ_p$-- and $\bF_p$--cohomology is topological in nature by \autoref{lem:topological_invariance_of_etale_cohomology}, we may replace $E$ by any scheme universally homeomorphic to $E$. A useful gadget for this purpose is the weak normalization $\wn\colon E^{\wn}\to E$  as it is a universal homeomorphism. We quickly restate the proof of this well--known fact, and then collect useful properties of $E^{\wn}$ (see \autoref{cor:strata_of_dlt_locus_is_S_2_up_to_univ_homeo}).

\begin{lemma}\label{lem:weak_nor_univ_homeo}
    Let $X$ be an excellent Noetherian scheme. Then its weak normalization $\wn \colon X^{\wn} \to X$ is a universal homeomorphism.
\end{lemma}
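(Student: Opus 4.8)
The plan is to reduce to the affine case, recognise $\wn$ as a morphism that is integral, surjective and universally injective, and then invoke the classical criterion (see EGA~IV${}_4$, 18.12.11, or \stacksproj{04DF}) that every such morphism is a universal homeomorphism. Since being a universal homeomorphism is local on the target, and the formation of the weak normalization is compatible with restriction to affine opens of $X$ (\cite[Definition~I.7.2.1]{Kollar_Rational_curves_on_algebraic_varieties}), we may assume $X = \Spec A$. As the reduction morphism $X_{\red} \to X$ is a universal homeomorphism, a composition of universal homeomorphisms is again one, and $X^{\wn}$ is reduced (so that $\wn$ factors through $X_{\red}$), we may further assume that $A$ is reduced. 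Then $A$, being excellent, is a Nagata ring, so the integral closure $\bar A$ of $A$ in its total ring of fractions is a finite $A$--module; hence the normalization $\nu \colon \bar X = \Spec \bar A \to X$ is finite and, by \cite[loc.\ cit.]{Kollar_Rational_curves_on_algebraic_varieties}, factors as $\bar X \to X^{\wn} \xrightarrow{\wn} X$ with $A \subseteq {}^{\wn}A := \Gamma(X^{\wn},\cO_{X^{\wn}}) \subseteq \bar A$.

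The one fact I would extract from Kollár's construction is the defining property of ${}^{\wn}A$: it is the largest subring $B$ with $A \subseteq B \subseteq \bar A$ such that, for every $x \in X$, the fibre of $\Spec B \to \Spec A$ over $x$ is a single point whose residue field is purely inseparable over $\kappa(x)$. Granting this, the three required properties are immediate. \emph{Integral}: ${}^{\wn}A$ is an $A$--submodule of the finite $A$--module $\bar A$, hence a finite $A$--module since $A$ is Noetherian, hence a finite — in particular integral — $A$--algebra, so $\wn$ is integral. \emph{Surjective}: $\nu$ is surjective and factors through $\wn$. \emph{Universally injective}: the fibre description above is exactly the characterisation of radicial morphisms (see \stacksproj{01S4}). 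The criterion quoted above now shows that $\wn$ is a universal homeomorphism. The same three--line check also settles the other examples mentioned in the excerpt: the reduction map and purely inseparable morphisms of normal schemes are visibly integral, surjective and radicial.

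The only genuine content — hence the step I expect to be the main obstacle — is isolating and justifying that defining property of ${}^{\wn}A$ from Kollár's definition, namely that its elements are precisely those sections of $\nu_*\cO_{\bar X}$ which are ``continuous'' on $X$ with values in purely inseparable extensions of the residue fields. Once that is in hand, the passage to ``universal homeomorphism'' is a purely formal invocation of the standard criterion and involves no computation.
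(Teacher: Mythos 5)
Your proposal is correct and follows essentially the same route as the paper's proof: both read off from Kollár's Definition I.7.2.1 that the weak normalization is integral (indeed finite, via excellence/Nagata), surjective and radicial, and then invoke the standard criterion \stacksproj{04DF} (together with \stacksproj{01S4}) that such a morphism is a universal homeomorphism. You simply spell out in more detail the steps the paper compresses into one line.
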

\begin{proof}
    Combining properties $(7.2.1.1.1)$ and $(7.2.1.1.2^{\mathrm{weak}})$ in \cite[Definition I.7.2.1]{Kollar_Rational_curves_on_algebraic_varieties} and \stacksproj{01S3}, the weak normalization map is finite, radicial and bijective. We then conclude by \stacksproj{01S4} and \stacksproj{04DF}.
\end{proof}

We state the next lemma for local complete intersection as well, although only the case of a $\bQ$--Cartier divisor will be needed. Due to its generality, it is rather surprising to the authors.

\begin{lemma}[\protect{\cite[Lemma 2.1]{Hacon_Witaszek_On_the_relative_MMP_for_4folds_in_pos_and_mixed_char}}]\label{lem:hypersurface_of_normal_var_is_S_2_up_to_nilp}
    Let $X$ be an excellent Noetherian normal scheme and let $D\subseteq X$ be a $\bQ$--Cartier divisor. Then the weak normalization $D^{\wn}$ of $D$ is $S_2$. The same conclusion holds if $D$ is a local complete intersection.
\end{lemma}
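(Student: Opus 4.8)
\emph{The approach.} The statement is local and the formation of the weak normalization commutes with localization, so we may assume $X = \Spec A$ with $A$ a normal excellent local domain; completing (which preserves normality, $A$ being excellent, and which detects $S_2$) we may take $A$ complete. The weak normalization only sees the underlying reduced scheme, hence only the topological space of $D$. In the $\bQ$--Cartier case $\cO_X(-mD) = (f)$ for some nonzerodivisor $f \in A$ and some $m \ge 1$, so $V(f) = mD$ has the same support --- and thus the same weak normalization --- as $D$; hence this case is the codimension-one instance of the local complete intersection case. It therefore suffices to prove the local complete intersection case: if $B = A/(f_1,\dots,f_c)$ for a regular sequence $f_1,\dots,f_c$ in a normal complete local ring $A$, then $B^{\wn}$ is $S_2$. (One could instead first reduce the $\bQ$--Cartier case to the Cartier case via the degree-$m$ cyclic cover $\Spec\bigoplus_{i=0}^{m-1}\cO_X(-iD) \to X$, which is normal and over which $D$ becomes Cartier, but this does not simplify matters.)

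\emph{The codimension $\le 1$ part.} Since a normal ring is Cohen--Macaulay at every point of codimension $\le 2$, the ring $B$, being cut out of such a ring by a regular sequence, is Cohen--Macaulay at every point of $\Spec B$ of codimension $\le 1$; the same then holds for $B_{\red}$ and for $B^{\wn}$, because reduction and weak normalization preserve Cohen--Macaulayness in dimension $\le 1$. Hence $B^{\wn}$ is $S_2$ (indeed Cohen--Macaulay) away from a closed subset of codimension $\ge 2$, and it is moreover reduced, Gorenstein in codimension one, and nodal in codimension one. What remains is to prove $\depth \cO_{B^{\wn},y} \ge 2$ at every point $y$ of codimension $\ge 2$.

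\emph{The codimension $\ge 2$ part (the crux).} Let $\nu\colon \widetilde B \to B^{\wn}$ be the normalization, with conductor $\mathfrak c$. The sequence $0 \to \cO_{B^{\wn}} \to \nu_*\cO_{\widetilde B} \to \cQ \to 0$ together with the normality (hence $S_2$-ness) of $\widetilde B$ reduces the desired depth bound to showing that $V(\mathfrak c)$ is \emph{pure of codimension one} in $\widetilde B$ (and without embedded points, and correspondingly on $B^{\wn}$): granting this, $B^{\wn}$ is a seminormal gluing of the $S_2$ scheme $\widetilde B$ along a pure codimension-one conductor along which it is nodal, and the gluing theory for demi-normal schemes (à la Koll\'ar) forces $B^{\wn}$ to be $S_2$. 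This purity is where one genuinely uses that $D$ is cut out by a regular sequence in the \emph{normal} scheme $X$ --- the statement is false for a general Weil divisor, since the union of two planes through a point is weakly normal but not $S_2$. One reduces, by cutting $X$ with general hyperplane sections (which preserves normality by Flenner's local Bertini theorem, while depth along the sections is controlled by a general regular sequence), to the case where $B^{\wn}$ is a surface, where purity of $\mathfrak c$ amounts to saying that the non-normal locus of $B^{\wn}$ has no isolated point. An isolated point $y$ there would make the punctured spectrum of $\cO_{B^{\wn},y}$, hence --- by universal homeomorphism --- that of $\cO_{B,y}$, disconnected; but $\Spec \cO_{B,y} \setminus \{y\}$ is obtained from the connected punctured spectrum of the normal local ring $\cO_{X,y}$ by imposing a regular sequence, and Grothendieck's connectedness theorem (this is exactly the ``complete intersection and connectedness'' setting) keeps such a punctured spectrum connected in this dimension range, a contradiction.

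\emph{The main obstacle} is precisely the purity of the conductor in codimension $\ge 2$. Weak normalization is a purely topological operation, so it supplies no depth on its own; all of the $S_2$-ness must be imported from the ambient normal $X$ and the complete-intersection nature of $D$, via connectedness and the gluing theory of $S_2$ schemes. Making the dimension reduction and the accompanying depth bookkeeping precise is the delicate point.
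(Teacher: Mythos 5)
Your overall strategy is in the right spirit---the paper also makes the reduction $\bQ$--Cartier $\Rightarrow$ Cartier $\Rightarrow$ lci and also derives everything from a connectedness theorem of Grothendieck--Faltings type---but the crux of your argument has a genuine gap. You claim that an isolated codimension--$\geq 2$ point $y$ of the non--normal locus of the weakly normal surface $B^{\wn}$ would disconnect the punctured spectrum of $\cO_{B^{\wn},y}$. This is false: connectedness of the punctured spectrum of the local ring itself is not enough, and weak normality plus a bijective normalization does not force normality. For instance, $A=\RR+\mathfrak{m}\subseteq\CC[[x,y]]$ (with $\mathfrak{m}$ the maximal ideal of $\CC[[x,y]]$) is an excellent, complete, two--dimensional, \emph{weakly normal} local ring whose normalization $\CC[[x,y]]$ is bijective over it and whose punctured spectrum is connected, yet $A$ is not $S_2$ (one has $H^1_{\mathfrak{m}}(A)\cong\CC/\RR\neq 0$). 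The disconnection you want only appears after passing to the \emph{strict henselization}: in this example $\Spec A^{\sh}\setminus\{\text{pt}\}$ does become disconnected because $\CC\otimes_{\RR}\CC\cong\CC\times\CC$. This is exactly why the paper works with $D^{\sh}\setminus\{x^{\sh}\}$ (via Koll\'ar's notion of topological normality and his Theorem 41), applies Faltings' connectedness theorem to the completion of the strict henselization, and only then combines topological normality with weak normality to get normality of pairs and hence $S_2$ via Koll\'ar's Corollary 8. Your argument, as written, never leaves the Zariski--local ring, so the final contradiction does not materialize.

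Two further steps are asserted but not proved. First, your reduction of $S_2$--ness to the conductor requires not only that the non--normal locus be pure of codimension one but also that the quotient $\cQ=\nu_*\cO_{\widetilde B}/\cO_{B^{\wn}}$ have no embedded associated points of codimension $\geq 2$ (since $H^1_y(\cO_{B^{\wn}})\cong H^0_y(\cQ)$); this is the whole difficulty and is dismissed in a parenthesis. (Invoking ``gluing theory for demi-normal schemes'' is circular here, since $S_2$ is part of the definition of demi-normal.) Second, the reduction to surfaces by general hyperplane sections needs the hyperplane section of $B^{\wn}$ to still be (or behave like) a weak normalization, i.e.\ a Bertini theorem for weak normality---which is known to be problematic in positive characteristic, the only setting this paper cares about. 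The paper's route avoids both issues entirely: the cohomological dimension of an ideal generated by $c$ elements is at most $c$, while $\dim\tX\geq c+2$, so Faltings' theorem applies directly in every codimension $\geq 2$ with no conductor analysis and no hyperplane cuts. I would encourage you to rework the codimension--$\geq 2$ step along those lines.
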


\begin{proof}
    To start, if $D$ is $\bQ$--Cartier, then as $D$ has the same reduction as any Cartier multiple and the weak normalization factors through the reduction, we may assume that $D$ itself is Cartier. Hence it suffices to treat the case where $D$ is an lci.

    Throughout, we will use the terminology and results from \cite{Kollar_variants_of_normality_for_Noetherian_schemes}. By \cite[Corollary 8]{Kollar_variants_of_normality_for_Noetherian_schemes}, it suffices to show that every pair $Z\subseteq D^{\wn}$ with $\codim_{D^{\wn}}(Z)\geq 2$ is normal. Given that these pairs are weakly normal by \cite[Proposition 5]{Kollar_variants_of_normality_for_Noetherian_schemes}, it is enough to show that they are topologically normal. Furthermore, as $\wn\colon D^{\wn}\to D$ is a universal homeomorphism by \autoref{lem:weak_nor_univ_homeo}, it is enough to show that every pair $Z \inc D$ with $\codim_D(Z) \geq 2$ is topologically normal (we are using that strong topological normality and topological normality are equivalent, see \cite[Theorem 31]{Kollar_variants_of_normality_for_Noetherian_schemes}).
    
    By \cite[Theorem 41]{Kollar_variants_of_normality_for_Noetherian_schemes}, we need to prove that for any $x\in D$ with $\codim_D x\geq 2$, the scheme $D^{\sh}\setminus\{x^{\sh}\}$ is connected, where $(D^{\sh}, x^{\sh})$ denotes the strict henselization of $D$ at $x$. The strategy will be to use a version of Faltings' connectedness theorem (namely \stacksproj{0ECR}, see also \cite[Theorem XIII.2.1]{Grothendieck_SGA_2}), which however only applies in the complete case. Hence, we further pass to the completion $(\tD, \tx)$ of $D^{\sh}$ at $x^{\sh}$. It then suffices to prove that $\tD\setminus\{\tx\}$ is connected, since the map $\tD\to D^{\sh}$ is faithfully flat (hence surjective) and the unique preimage of $x^{\sh}$ is $\tx$.
     
    Denote by $X^{\sh}$ the strict henselization of $X$ at $x$ and by $\tX$ the completion of $X^{\sh}$ at $x^{\sh}$. Also, denote by $c$ the codimension at $x$ of $D$ in $X$. We now verify that $c$, $\tx$, $\tD$ and $\tX$ satisfy the hypotheses of the aformentioned connectedness theorem.

    First of all, let us show that $\tX$ is Noetherian normal of dimension $\dim\tX\geq c+2$. By \stacksproj{06LJ} and \stacksproj{06DI}, $X^{\sh}$ is Noetherian normal. By point (3) of \stacksproj{04GP} and point (2) of \cite[Theorem 1.3]{Bhatt_Scholze_The_pro-etale_topology_for_schemes}, the natural map $X^{\sh}\to \Spec\cO_{X,x}$ is weakly \'etale, so $X^{\sh}$ is also excellent (see \cite[Theorem 8.1]{Raynaud_Anneaux_excellents}). By \stacksproj{0C23}, we obtain that $\tX$ is normal, and by \stacksproj{06LJ} it is also Noetherian. By \stacksproj{07NV} and \stacksproj{06LK}, we have
    \begin{align*}
        \dim\tX = \dim X^{\sh} = \dim \cO_{X,x}\expl{=}{$X$ is catenary by excellence}c+\codim_D x\geq c+2.
    \end{align*}

    Second of all, we describe the ideal of $\tD$ in $\tX$. As $D$ is an lci of codimension $c$ at $x$, the ideal of $D$ in $\cO_{X,x}$ can be generated by $c$ elements $f_1,\ldots,f_c\in \fm_{X,x}$. Applying \stacksproj{08HV} to the quotient map $\cO_{X,x}\to\cO_{X,x}/(f_1,\ldots,f_c)$ and the respective maximal ideals, we obtain a natural isomorphism
        \begin{align*}
            (\cO_{X,x}/(f_1,\ldots,f_c))^{\sh}\cong (\cO_{X,x}^{\sh}/(f_1,\ldots,f_c))^{\sh},
        \end{align*}
    where on the right we take the strict henselization with respect to the maximal ideal (the image of $f_i$ in $\cO_{X,x}^{\sh}$ is still denoted $f_i$). But as henselianity is preserved under quotients (if solutions lift to the ring then they certainly lift to a quotient) and as $\cO_{X,x}^{\sh}/(f_1,\ldots,f_c)$ and $\cO_{X,x}^{\sh}$ have the same residue field, $\cO_{X,x}^{\sh}/(f_1,\ldots,f_c)$ is already strictly henselian. Hence $(f_1,\ldots,f_c)$ is the ideal of $D^{\sh}$ in $X^{\sh}$. Finally, as $\cO_{X,x}^{\sh}$ is Noetherian local, completion with respect to $\fm^{\sh}_{X,x}$ is exact. Hence we obtain that the ideal $I_{\tD}$ of $\tD$ in $\tX$ is also given by $(f_1,\ldots,f_c)$.
    
    We are now in place to prove that $\tD\setminus\{\tx\}$ is connected. By \stacksproj{0ECR}, we must show that the cohomological dimension of $I_{\tD}$ in $\cO_{\tX,\tx}$ is at most $\dim \tX-2$ (as $\tX$ is normal local and hence irreducible). But $\dim \tX\geq c+2$, and by \stacksproj{0DX9}, the cohomological dimension of $I_{\tD}$ in $\cO_{\tX,\tx}$ is smaller than or equal to the number of generators of $I_{\tD}$, which is $c$. Hence the hypotheses of \stacksproj{0ECR} are verified, and we conclude that $\tD\setminus\{\tx\}$ is connected.
\end{proof}

\begin{corollary}[{\cite[Lemma 2.1]{Hacon_Witaszek_On_the_relative_MMP_for_4folds_in_pos_and_mixed_char}}]\label{cor:strata_of_dlt_locus_is_S_2_up_to_univ_homeo}
    Let $(X, D + \Delta)$ be a dlt pair with $D$ prime and $\bQ$--Cartier. Then $D^{\wn}$ is normal, i.e. the normalization $D^{\nu} \to D$ is a universal homeomorphism.
\end{corollary}

\begin{proof}
    By \autoref{lem:hypersurface_of_normal_var_is_S_2_up_to_nilp}, it is enough to show that $D$ is weakly normal at codimension one points. This is immediate by the dlt assumption.
\end{proof}

We now have all the ingredients to prove the following general fact about arbitrary dimensional dlt pairs. The characteristic zero version is well--known, see \cite[Theorem 4.16]{Kollar_Singularities_of_the_minimal_model_program}.

\begin{proposition}\label{prop:dlt_structure}
Let $(Y,E)$ be a dlt pair, Write $E = \sum_{i = 1}^r E_i$ with each $E_i$ a prime divisor. Assume furthermore that each $E_i$ is $\bQ$--Cartier. The following assertions are true:
\begin{enumerate}
        \item \label{prop:dlt_struct_geometry}
            Each intersection $E_{i_1} \cap \dots \cap E_{i_s}$  with $1 \leq i_1 < \dots < i_s \leq r$ has pure codimension $s$, and is normal up to universal homeomorphism. In particular, it is the disjoint union of its connected components.
	\item \label{prop:dlt_structure_it1}
		the $s$--codimensional log canonical centers of $(Y,E)$ are exactly the irreducible components of intersections 
	\[
	E_{i_1} \cap \ldots \cap E_{i_s},
	\] for some choice of indices $1 \leq i_1<\cdots<i_s \leq n$.
    \item\label{prop:dlt_structure_adjunction} For all $i$, we have $\Diff(E-E_i)=(E-E_i)|_{E_i^{\nu}}$ and the pair $(E_i^{\nu},(E-E_i)|_{E_i^{\nu}})$ is dlt.
	\item \label{prop:dlt_structure_it3}
		$Y$ is regular along the generic points of $E_i \cap E_j$, for $i \neq j$.
\end{enumerate}
If $K_Y+E$ is Cartier then additionally:
\begin{enumerate}[resume]
\item \label{prop:dlt_structure_it4} 
	$Y$ is regular in codimension two along $E$,
\item \label{prop:dlt_structure_it5}
	the pairs $(E_i^{\nu},(E-E_i)|_{E_i^{\nu}})$ are dlt, and every $E_i^{\nu}$ is regular at its codimension two points in $(E - E_i)|_{E_i^{\nu}}$. In particular, each $E_i^{\nu}$ has canonical singularities at its codimension two points.
\end{enumerate}
\end{proposition}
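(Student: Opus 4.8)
The statement is the counterpart over $F$--finite fields of \cite[Theorem 4.16]{Kollar_Singularities_of_the_minimal_model_program}, and the plan is to run that proof while substituting, for every use of Kawamata--Viehweg-type vanishing (which in characteristic zero produces normality of log canonical centers), the results \autoref{cor:strata_of_dlt_locus_is_S_2_up_to_univ_homeo} and \autoref{lem:hypersurface_of_normal_var_is_S_2_up_to_nilp}. All hypotheses and conclusions are local on $Y$, stable under localization, and compatible with passing to a birational model, so one reduces to a local assertion and argues by induction on $\dim Y$ (equivalently on $r$), the inductive step being adjunction along a single component $E_1$. I would also record the standing consequence of the definition of dlt that there is a dense open $U \subseteq Y$ with $(U, E|_U)$ log smooth and such that the generic point of every log canonical center of $(Y,E)$ lies in $U$. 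The main difficulty is the bookkeeping forced by working only up to universal homeomorphism --- adjunction itself behaves well, but intersections of three or more components need not be normal, so one must carry the weak normalizations consistently through the induction --- together with the surface-theoretic input needed for the last two items.

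\emph{Step 1 (adjunction; item \autoref{prop:dlt_structure_adjunction}, and the case $s=1$ of items \autoref{prop:dlt_struct_geometry} and \autoref{prop:dlt_structure_it1}).} Each $E_i$ is a prime $\bQ$--Cartier divisor in the dlt pair $(Y,E)$, so the final assertion of \autoref{cor:strata_of_dlt_locus_is_S_2_up_to_univ_homeo} gives that $E_i^{\wn}$ is normal; hence $E_i^{\wn} = E_i^{\nu}$ and $E_i^{\nu} \to E_i$ is a universal homeomorphism, which is the case $s = 1$ of \autoref{prop:dlt_struct_geometry}, while \autoref{prop:dlt_structure_it1} for $s = 1$ is immediate since $E_i$ is a component of $\lfloor E \rfloor$. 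By (generalised) adjunction there is a canonically defined effective $\bQ$--divisor $\Diff_{E_i^{\nu}}(E - E_i)$ with
\[ K_{E_i^{\nu}} + \Diff_{E_i^{\nu}}(E - E_i) \sim_{\bQ} (K_Y + E)|_{E_i^{\nu}}, \]
and inversion of adjunction shows the pair $\big(E_i^{\nu}, \Diff_{E_i^{\nu}}(E - E_i)\big)$ is log canonical. One computes this different at each codimension-one point $v$ of $E_i^{\nu}$: its image $y$ in $Y$ is either the generic point of $E_i$ (where $Y$ is regular and locally $E = E_i$, giving coefficient $0$, in agreement with $(E - E_i)|_{E_i^{\nu}}$) or a codimension-two point of $Y$ on $E_i$; in the latter case, if $y$ also lies on some $E_j$ with $j \neq i$ then $\overline{\{y\}}$ is a log canonical center by \autoref{prop:dlt_structure_it1} (proved in Step 2), so $y \in U$, the pair $(Y,E)$ is snc at $y$, and the coefficient of the different at $v$ equals $1$, contributed by $E_j$, while if $y$ lies on no other component the coefficient is $0$ (this uses that $E_i^{\nu} \to E_i$ is an isomorphism in codimension one and, when $K_Y + E$ is Cartier near $y$, that $K_Y + E_i$ then has trivial Cartier index there). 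This yields $\Diff_{E_i^{\nu}}(E - E_i) = (E - E_i)|_{E_i^{\nu}}$. Finally $\big(E_i^{\nu}, (E - E_i)|_{E_i^{\nu}}\big)$ is dlt: its log smooth locus contains the dense open $\wn^{-1}(U \cap E_i)$, and under the correspondence of log canonical places along adjunction the generic points of its log canonical centers are mapped to generic points of log canonical centers of $(Y,E)$, hence lie in $U$.

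\emph{Step 2 (items \autoref{prop:dlt_struct_geometry}, \autoref{prop:dlt_structure_it1}, \autoref{prop:dlt_structure_it3} in general).} Induct on $s$, using Step 1 to decrease $\dim Y$. A component $W$ of $E_{i_1} \cap \dots \cap E_{i_s}$ is contained in a component of $E_{i_1} \cap \dots \cap E_{i_{s-1}}$; passing to the dlt pair $\big(E_{i_1}^{\nu}, (E - E_{i_1})|_{E_{i_1}^{\nu}}\big)$ from Step 1, whose boundary components are the prime components of the divisors $(E_j)|_{E_{i_1}^{\nu}}$ for $j \neq i_1$, the intersections $E_{i_1} \cap E_{i_2} \cap \dots$ and the log canonical centers of $(Y,E)$ contained in $E_{i_1}$ correspond to their analogues on $E_{i_1}^{\nu}$. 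By the inductive hypothesis applied to this lower-dimensional pair, $W$ (viewed on $E_{i_1}^{\nu}$) is one of its codimension-$(s-1)$ log canonical centers and is normal up to universal homeomorphism; transported back to $Y$, $W$ is a log canonical center of $(Y,E)$ of codimension $s$ which is normal up to universal homeomorphism. Conversely, any codimension-$s$ log canonical center $Z$ has $\eta_Z \in U$, near which $(Y,E)$ is snc, so $Z$ is the closure of a stratum $E_{i_1} \cap \dots \cap E_{i_s}$; this proves \autoref{prop:dlt_structure_it1}. The ``disjoint union of connected components'' clause of \autoref{prop:dlt_struct_geometry} then follows since the weak normalization of the intersection is a disjoint union of normal varieties and is a homeomorphism onto the intersection, and \autoref{prop:dlt_structure_it3} follows because a component of $E_i \cap E_j$ is a codimension-two log canonical center, so its generic point lies in $U$, where $Y$ is regular.

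\emph{Step 3 (items \autoref{prop:dlt_structure_it4} and \autoref{prop:dlt_structure_it5}, assuming $K_Y + E$ Cartier).} Let $y$ be a codimension-two point of $Y$ lying on some $E_i$. If $y$ lies on a second component, then $\overline{\{y\}}$ has codimension two and $E_i \cap E_j$ has pure codimension two by \autoref{prop:dlt_struct_geometry}, so $\overline{\{y\}}$ is a component of $E_i \cap E_j$, hence a log canonical center, so $y \in U$ and $Y$ is regular at $y$. If $y$ lies on $E_i$ only, localize at $y$ to obtain an excellent normal surface germ, with $(Y,E) = (Y,E_i)$ log canonical there and $K_Y + E_i$ Cartier; if some exceptional divisor $F$ over $y$ satisfies $a(F; Y, E_i) = -1$, then $\overline{\{y\}}$ is a log canonical center, so $y \in U$ and $Y$ is regular at $y$; otherwise, $K_Y + E_i$ being Cartier forces all exceptional discrepancies over $y$ to be integers $> -1$, hence $\geq 0$, so $(Y, E_i)$ is canonical at $y$, and comparison with the minimal resolution (whose exceptional discrepancies over a normal surface singularity are $\leq 0$ by the negativity lemma, while the pullback of $E_i$ would have strictly positive coefficient along every exceptional curve if $Y$ were singular at $y \in E_i$) forces $Y$ to be regular at $y$. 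This proves \autoref{prop:dlt_structure_it4}. For \autoref{prop:dlt_structure_it5}: by Step 1 the pair $\big(E_i^{\nu}, (E - E_i)|_{E_i^{\nu}}\big)$ is dlt, its boundary is $\bQ$--Cartier, and $K_{E_i^{\nu}} + (E - E_i)|_{E_i^{\nu}} = (K_Y + E)|_{E_i^{\nu}}$ is Cartier; applying \autoref{prop:dlt_structure_it4} (just proved) to it shows $E_i^{\nu}$ is regular at its codimension-two points lying on $(E - E_i)|_{E_i^{\nu}}$, and since a normal variety regular in codimension two is canonical there, the final statement follows.
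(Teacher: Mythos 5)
Your overall strategy coincides with the paper's: run Koll\'ar's proof of \cite[Theorem 4.16]{Kollar_Singularities_of_the_minimal_model_program}, substituting ``normal up to universal homeomorphism'' via \autoref{cor:strata_of_dlt_locus_is_S_2_up_to_univ_homeo} wherever characteristic--zero vanishing is used, and your Step 3 is essentially the paper's surface--germ discrepancy computation for \autoref{prop:dlt_structure_it4} and \autoref{prop:dlt_structure_it5}. However, Step 2 has a genuine gap. To apply the inductive hypothesis to the adjoint pair $\bigl(E_{i_1}^{\nu},(E-E_{i_1})|_{E_{i_1}^{\nu}}\bigr)$ you must verify the standing hypothesis of the proposition for that pair, namely that each \emph{prime component} of its boundary is $\bQ$--Cartier. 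The restriction $E_j|_{E_{i_1}^{\nu}}$ is $\bQ$--Cartier as a whole, but its individual irreducible components need not be unless they are pairwise disjoint, i.e.\ unless $E_{i_1}\cap E_j$ is the disjoint union of its irreducible components. You derive this disjointness only at the end of Step 2, as a consequence of the induction, and moreover via the claim that ``the weak normalization of the intersection is a disjoint union of normal varieties and is a homeomorphism onto the intersection.'' That claim is false in general: the weak normalization is a universal homeomorphism (\autoref{lem:weak_nor_univ_homeo}), so unlike the normalization it does \emph{not} separate irreducible components that meet; it cannot be simultaneously a homeomorphism and a disjoint union of the components unless the components were already disjoint --- which is exactly what has to be proven. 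The paper closes this gap \emph{before} starting the induction: since $(E_i^{\nu}, E_j|_{E_i^{\nu}})$ is dlt, the weak normalization of $E_j|_{E_i^{\nu}}$ is $S_2$ by \autoref{cor:strata_of_dlt_locus_is_S_2_up_to_univ_homeo}, so by connectedness in codimension one any two of its irreducible components that meet must meet along a codimension--one subset; localizing $Y$ at such a point produces a dlt pair in which this crossing forces a non--snc log canonical center, a contradiction. Only after this is the $\bQ$--Cartier hypothesis available for the lower--dimensional pair and the induction of Koll\'ar's argument can proceed.

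Two smaller points. First, in Step 1 you invoke item \autoref{prop:dlt_structure_it1} ``proved in Step 2'' to compute the different, while Step 2 depends on Step 1; this circularity is avoidable (a codimension--two point of $Y$ lying on two boundary components is the generic point of a log canonical center by a direct surface localization), but as written the logical order is broken. Second, in Step 3 your justification of the final clause of \autoref{prop:dlt_structure_it5} only covers codimension--two points of $E_i^{\nu}$ lying on $(E-E_i)|_{E_i^{\nu}}$; for codimension--two points off the boundary one must argue separately, as the paper does, that $E_i^{\nu}$ is there dlt with $K_{E_i^{\nu}}$ Cartier, hence canonical.
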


\begin{proof}
    Let us first show the statements which do not require $K_Y + E$ to be Cartier. The idea is to mimic the inductional argument in \cite[Theorem 4.16]{Kollar_Singularities_of_the_minimal_model_program}.

    Let us first show that every irreducible component of each $E_i|_{E_j^{\nu}}$ is $\bQ$--Cartier. This would follow from our $\bQ$--Cartier assumption if we could show that $E_i \cap E_j$ was the disjoint union of its irreducible components. First of all, since $E_i$ and $E_j$ are $\bQ$--Cartier, their intersection $E_i \cap E_j$ must be of pure codimension two. Given that $(X, E_i + E_j)$ is dlt, we know by adjunction (see \cite[Claim 4.16.4]{Kollar_Singularities_of_the_minimal_model_program}) that $(E_i^{\nu}, E_j|_{E_i^{\nu}})$ is also dlt. We then deduce by \autoref{cor:strata_of_dlt_locus_is_S_2_up_to_univ_homeo} that the weak normalization of $E_j|_{E_i^{\nu}}$ is $S_2$, so its irreducible components must either not meet, or meet at codimension one points of $E_j|_{E_i^{\nu}}$. Since $E_i^{\nu} \to E_i$ is a universal homeomorphism (again by \autoref{cor:strata_of_dlt_locus_is_S_2_up_to_univ_homeo}), we deduce that two irreducible components of $E_i \cap E_j$ must meet at a codimension two point of $Y$. Localizing at this point and hence reducing to a dlt surface pair, we see that this is impossible (this intersection would give rise to a non--snc log canonical center).

    We are now in place to apply exactly the same argument as in the proof of \cite[Theorem 4.16]{Kollar_Singularities_of_the_minimal_model_program}, replacing each instance of ``normal'' with ``normal up to universal homeomorphism'' by \autoref{cor:strata_of_dlt_locus_is_S_2_up_to_univ_homeo}. This immediately concludes the proof of parts \autoref{prop:dlt_struct_geometry}, \autoref{prop:dlt_structure_it1} and \autoref{prop:dlt_structure_adjunction}. Furthermore, since $E_i \cap E_j$ has pure codimension two, the pair $(Y, E)$ is snc along its generic points. Hence, part \autoref{prop:dlt_structure_it3} follows.

    Let us now assume that $K_Y + E$ is Cartier. In order to prove \autoref{prop:dlt_structure_it4}, we localize $(Y, E)$ at the codimension two point. This way we obtain a dlt surface pair $(T, D)$ with $K_T + D$ Cartier, and a distinguished point $t \in D$. We need to show that $T$ is regular at $t$, which we will show by contradiction. We first claim that the discrepancies of the minimal resolution over the point $t$ are negative. Indeed, they are at most zero because $T$ is singular at $t$ and consequently negative since $D$ passes through $t$. Since $K_T + D$ is Cartier, the discrepancies are then equal to $-1$. However, by definition of dlt, this means that $t$ is an snc point of $(T,D)$ which gives a contradiction. 

    The argument that $\Diff(E-E_i)=(E - E_i)|_{E_i^{\nu}}$ and that the pair $\left(E_i^{\nu},(E - E_i)|_{E_i^{\nu}}\right)$ is dlt is exactly as in the proof of \cite[Claim 4.16.4]{Kollar_Singularities_of_the_minimal_model_program}. Note also that by adjunction, $K_{E_i^{\nu}} + (E - E_i)|_{E^{\nu}}$ is Cartier. The claim that $E_i^{\nu}$ is regular at every codimension two point in $(E - E_i)|_{E_i^{\nu}}$ follows from the same argument as in the previous part. It is then immediate that $E_i^{\nu}$ has canonical singularities in codimension two, since outside of $(E - E_i)|_{E_i^{\nu}}$, the variety $E_i^{\nu}$ is dlt and $K_{E_i^{\nu}}$ is Cartier.
\end{proof}

The situation in the following corollary will exactly be what will happen in our main theorems in dimension $3$ and $4$.

\begin{corollary}\label{cor:full_exc_div_GFS_and_K_trivial}
    Let $X$ be a normal, quasi--Gorenstein, $F$--pure variety. Let $\pi \colon Y \to X$ be proper birational morphism. Assume that $\pi$ is an isomorphism away from a finite number of points on $X$, and that we can write \[ K_Y + E \sim \pi^*K_X, \] where $(Y, E)$ is dlt and every component of $E$ is $\bQ$--Cartier. Then the following holds:
    \begin{enumerate}
        \item\label{F_adj_total} $E^{\wn}$ is globally $F$--split, and $\omega_{E^{\wn}} \cong \cO_{E^{\wn}}$;
        \item\label{F_adj_cpnts} for any component $E_i$ of $E$, the pair $(E_i^{\nu}, \Delta_i \coloneqq (E - E_i)|_{E_i^{\nu}})$ is dlt, globally sharply $F$--split, and satisfies $K_{E_i^{\nu}} + \Delta_i \sim 0$.
    \end{enumerate}
\end{corollary}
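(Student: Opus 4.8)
The plan is to derive both assertions from adjunction for dlt pairs (\autoref{prop:dlt_structure}) together with $F$-adjunction, the decisive inputs being that $X$ is $F$-pure and that $K_X$ is Cartier. I begin with two harmless reductions. Since $K_X$ is Cartier, $\pi^*K_X$ is a line bundle and hence $K_Y+E\sim\pi^*K_X$ is Cartier, so we are in the situation of the second half of \autoref{prop:dlt_structure}. Moreover $E$ is $\pi$-exceptional: pushing $K_Y+E\sim\pi^*K_X$ forward and using $\pi_*K_Y=K_X$ gives $\pi_*E=0$. Finally, as every assertion is local on $X$ and in the situation of interest $\pi(E)$ is a point, we may assume $\cO_X(K_X)\cong\cO_X$; then the restriction of $\cO_Y(K_Y+E)=\pi^*\cO_X(K_X)$ to any subscheme of $E$ is trivial.

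\emph{Proof of \autoref{F_adj_total}, the statement on $\omega_{E^\wn}$.} By \autoref{prop:dlt_structure}\autoref{prop:dlt_structure_it4}, $Y$ is regular in codimension two along $E$, so $E$ is a Cartier divisor (in fact snc) at each of its codimension-one points, and ordinary adjunction gives $\omega_E\cong\cO_Y(K_Y+E)|_E$ away from a closed subset of $E$ of codimension $\ge 2$. By \autoref{cor:strata_of_dlt_locus_is_S_2_up_to_univ_homeo} the scheme $E^\wn$ is $S_2$ and $\wn$ is an isomorphism in codimension one; hence the two $S_2$-sheaves $\omega_{E^\wn}$ and $\wn^*\!\big(\cO_Y(K_Y+E)|_E\big)$, which agree in codimension one, are isomorphic. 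Together with the triviality noted above this yields $\omega_{E^\wn}\cong\cO_{E^\wn}$.

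\emph{The $F$-split assertions.} Since $X$ is $F$-pure and $K_X\sim 0$, the Frobenius $\cO_X\to F_*\cO_X$ admits a \emph{global} splitting $\phi_X$. Because $K_{Y/X}=K_Y-\pi^*K_X\sim -E$, Grothendieck duality for $\pi$ (combined with $\pi_*\cO_Y=\cO_X$, valid as $Y$ is normal) identifies $\Hom_{\cO_X}(F_*\cO_X,\cO_X)$ with $H^0\!\big(Y,\cHom_{\cO_Y}(F_*\cO_Y((p-1)E),\cO_Y)\big)$, compatibly with evaluation at $1$; transporting $\phi_X$ through this identification produces a global splitting $\phi_Y\colon F_*\cO_Y((p-1)E)\to\cO_Y$ of the pair $(Y,E)$, so that $(Y,E)$ is globally sharply $F$-split. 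Now I apply $F$-adjunction. The divisor $E$ is $\phi_Y$-compatibly split; restricting $\phi_Y$ to $E$ and using $\omega_{E^\wn}\cong\cO_{E^\wn}$ (so that the twist by $(p-1)E$ "restricts to nothing") yields a global splitting of $\cO_{E^\wn}\to F_*\cO_{E^\wn}$, proving \autoref{F_adj_total}. For \autoref{F_adj_cpnts}: \autoref{prop:dlt_structure}\autoref{prop:dlt_structure_adjunction} and \autoref{prop:dlt_structure_it5} give that $(E_i^\nu,\Delta_i)$ is dlt with $K_{E_i^\nu}+\Delta_i$ Cartier, and adjunction together with the triviality above gives $K_{E_i^\nu}+\Delta_i\sim(K_Y+E)|_{E_i^\nu}\sim 0$; the global sharp $F$-splitness follows from the same mechanism applied to the component $E_i$, restricting $\phi_Y$ and passing to the normalisation $E_i^\nu$, where $F$-adjunction produces exactly the different $\Delta_i=(E-E_i)|_{E_i^\nu}$ as the boundary (this is where the precise adjunction formula of \autoref{prop:dlt_structure}, with no correction terms, and the $\bQ$-Cartier hypothesis on the $E_i$, are used).

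The main obstacle is the $F$-adjunction step: one must verify carefully that the global $F$-splitting of $X$ really descends through $\pi$ to a splitting of the pair $(Y,E)$, and then restricts \emph{compatibly} — with the correct twist and the correct boundary divisor — to the possibly non-normal $E^\wn$ and to each normalisation $E_i^\nu$. This is exactly where weak normalisations and the structure results of \autoref{prop:dlt_structure} (purity of the intersections, $\Diff(E-E_i)=(E-E_i)|_{E_i^\nu}$, Cartierness of $K_{E_i^\nu}+\Delta_i$) are needed, and where the non-normality of log canonical centres has to be handled with care.
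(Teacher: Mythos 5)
Your proposal follows essentially the same route as the paper's proof: Cartierness of $K_Y+E$ together with \autoref{prop:dlt_structure}.\autoref{prop:dlt_structure_it4}--\autoref{prop:dlt_structure_it5} for the structure of $Y$ along $E$, adjunction in codimension one plus $S_2$--ness of $E^{\wn}$ (\autoref{cor:strata_of_dlt_locus_is_S_2_up_to_univ_homeo}) to get $\omega_{E^{\wn}}\cong\cO_{E^{\wn}}$, Grothendieck duality to lift the splitting of $X$ to a global sharp $F$--splitting of the pair $(Y,E)$ (this is exactly the cited Gongyo--Takagi lemma), and then $F$--adjunction plus reflexive extension for \autoref{F_adj_total} and \autoref{F_adj_cpnts}. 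The one step you explicitly flag but do not carry out --- making the restricted splitting live on $E^{\wn}$ rather than on the possibly non--weakly--normal $E$ --- is supplied in the paper by showing that $U=Y_{\reg}\cap E$ is weakly normal (snc at codimension--one points by dltness, and an $S_2$ divisor in a regular scheme at deeper points, so Koll\'ar's criterion applies), so that $E^{\wn}\to E$ is an isomorphism over $U$, $F$--adjunction applies on $U$ where the ambient space is regular, and the resulting splitting extends from $U$ to all of $E^{\wn}$ by $S_2$--ness.
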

\begin{proof}
    Since $K_X$ is Cartier, so is $K_Y + E$. We then deduce by \autoref{prop:dlt_structure}.\autoref{prop:dlt_structure_it4} that $Y$ is regular in codimension two along $E$. In particular, $E$ is Cartier along its codimension one points. Furthermore, since $X$ is $F$--pure, we can lift the $F$--splitting section to $Y$ and deduce that $(Y, E)$ is globally sharply $F$--split (see \cite[Lemma 3.3]{Gongyo_Takagi_Surfaces_of_gloally_F_regular_and_F_split_type}).
    
    Let $U \coloneqq Y_{\reg} \cap E$, and let us show that $U$ is weakly normal, by showing that for $y \in U$ such that $\overline{y}$ is nowhere dense, the pair $(y, U_y)$ is weakly normal (by \cite[Proposition 10]{Kollar_variants_of_normality_for_Noetherian_schemes}, this will conclude). If $y$ has codimension $1$, then $(Y, E)$ is snc at $y$ by the dlt assumption, so $(y, U_y)$ is weakly normal. If $y$ has codimension at least $2$, then $(y, U_y)$ is also weakly normal by \cite[Corollary 8]{Kollar_variants_of_normality_for_Noetherian_schemes} since $U_y$ is $S_2$ (it is a divisor in a regular scheme). Thus, $U$ is indeed weakly normal. In other words, the following holds:
    \begin{enumerate}[(a)]
        \item $U$ contains all codimension one points of $E$ (see \autoref{prop:dlt_structure}.\autoref{prop:dlt_structure_it4}); 
        \item $E^{\wn} \to E$ is an isomorphism over $U$ (by our work above and \cite[Proposition 11]{Kollar_variants_of_normality_for_Noetherian_schemes});
        \item $\omega_U \cong \cO_U$ by adjunction and the fact that $\pi$ is an isomorphism away from a finite number of points on $X$; 
        \item $U$ is globally $F$--split by $F$--adjunction (see \autoref{rem:F_adj}).
    \end{enumerate}

    Since $E^{\wn}$ is $S_2$ by \autoref{lem:hypersurface_of_normal_var_is_S_2_up_to_nilp}, we automatically deduce the result for $E^{\wn}$ (i.e.\ \autoref{F_adj_total} holds). The proof of \autoref{F_adj_cpnts} is identical (use \autoref{prop:dlt_structure}.\autoref{prop:dlt_structure_adjunction} to obtain the dlt statement).
\end{proof}

\begin{remark}\label{rem:F_adj}
    By $F$--adjunction, we meant the following standard statement: let $V$ be a smooth variety, and let $D$ and $\Delta$ be Weil divisors such that $(V, D + \Delta)$ is globally sharply $F$--split. Assume either that $D$ is also smooth, or that $\Delta = 0$. Then $(D, \Delta|_D)$ is also globally sharply $F$--split. 
\end{remark}

\subsection{Positive characteristic cohomologies and variants of rational singularities}

Let $f \colon Y \to X$ be a morphism of $\bF_p$--schemes. Throughout, whenever we mention the sheaf $\bF_{p, Y}$ on $Y$, we mean it as an étale sheaf (i.e.\ on the étale site of $Y$). The étale sheaves $R^if_*\bF_{p, Y}$ are then defined by taking the usual higher direct image functor (in the étale site). We will often omit the index $Y$ in $\bF_{p, Y}$, when the context is clear.

For the definition of the sheaf $\bQ_p$ and its higher pushforwards, we refer the reader to \cite[Section 3.3]{Patakfalvi_Zdanowicz_Ordinary}.

\begin{warning}\label{warning:etale_cohomology}
    If $f \colon X \to \Spec k$ is a morphism and $k$ is not separably closed, there is no reason to have an isomorphism \[ R^if_*\bF_p \cong H^i_{\et}(X, \bF_p) \] (the problem being that the étale site of $\Spec k$ is non--trivial). However, this is the case if $k$ is separably closed, as then for any étale sheaf $\cG$ on $\Spec k$ and $i > 0$, $H^i_{\et}(\Spec k, \cG) = 0$ so the Leray spectral sequence \[ H^i_{\et}(\Spec k, R^jf_*\bF_p) \implies H^{i + j}_{\et}(X, \bF_p) \] degenerates. The same also works for $\bQ_p$.
\end{warning}

First of all, let us recall a very important fact of $\bF_p$-- and $\bQ_p$--cohomology.

\begin{lemma}\label{lem:topological_invariance_of_etale_cohomology}
    Let $f \colon Y \to X$ be a universal homeomorphism. Then for all $i \geq 0$, the induced maps $H^i_{\et}(X, \bF_p) \to H^i_{\et}(Y, \bF_p)$ and $H^i_{\et}(X, \bQ_p) \to H^i_{\et}(Y, \bQ_p)$ are isomorphisms. 
    
    If $f$ is finite, then also the natural map $\cO_X^{1/p^{\infty}} \to f_*\cO_Y^{1/p^{\infty}}$ is an isomorphism.
\end{lemma}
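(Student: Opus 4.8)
The plan is to dispatch the two assertions by separate, essentially formal, arguments. For the étale cohomology statements, the crucial input is the topological invariance of the small étale site: a universal homeomorphism is in particular integral, universally injective and surjective, so the pullback functor $X_{\et}\to Y_{\et}$, $U\mapsto U\times_X Y$, is an equivalence of categories (\stacksproj{04DY}). The corresponding equivalence of sheaf topoi is the pullback $f^{-1}$, which carries the constant sheaf $\bF_{p,X}$ to $\bF_{p,Y}$; hence $f^{-1}$ induces an isomorphism $H^i_{\et}(X,\bF_p)\xrightarrow{\sim} H^i_{\et}(Y,\bF_p)$ for every $i$, and this is exactly the natural pullback map. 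The same argument applies verbatim with $\bF_p$ replaced by $\bZ/p^n\bZ$ for each $n$, compatibly with the transition maps $\bZ/p^{n+1}\bZ\to\bZ/p^n\bZ$. Since the $\bZ_p$-- and $\bQ_p$--cohomologies of \cite[Section 3.3]{Patakfalvi_Zdanowicz_Ordinary} are built from the groups $H^i_{\et}(-,\bZ/p^n\bZ)$ by passing to the inverse limit over $n$ (and, for $\bQ_p$, inverting $p$), the statement for $\bQ_p$ follows; the $\varprojlim^1$--contributions are matched up as well, so nothing is lost in the limit.

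For the second assertion I would first reduce to a ring--theoretic claim. Working Zariski--locally and using that $f$ is finite (hence affine), we may assume $X=\Spec A$, $Y=\Spec B$ with $A\to B$ a finite map of $\bF_p$--algebras that is a universal homeomorphism on spectra. Because $f$ is affine, $f_*$ commutes with the filtered colimit defining the perfection, and $f_* F^n_{Y,*}=F^n_{X,*}f_*$ by functoriality of the Frobenius; hence $f_*\cO_Y^{1/p^\infty}$ is the quasi--coherent sheaf attached to the perfection $B^{1/p^\infty}=\varinjlim(B\xrightarrow{F}B\xrightarrow{F}\cdots)$ of the $A$--algebra $B$, and the map in question is the one induced by $A\to B$. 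Perfection kills nilpotents, so $A^{1/p^\infty}=(A_{\red})^{1/p^\infty}$ and $B^{1/p^\infty}=(B_{\red})^{1/p^\infty}$; furthermore $\ker(A\to B)$ is nilpotent since $\Spec B\to\Spec A$ is surjective, so $A_{\red}\hookrightarrow B_{\red}$, and we are reduced to the case $A\subseteq B$ with both rings reduced.

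The remaining input is the standard fact that a finite universal homeomorphism of $\bF_p$--algebras $A\subseteq B$ with $A$ reduced has the property that every $b\in B$ satisfies $b^{p^n}\in A$ for some $n\geq 0$ (equivalently: finite universal homeomorphisms of $\bF_p$--schemes become isomorphisms after perfection). This is where the characteristic--$p$ hypothesis is genuinely used; it can be extracted from the theory of perfection in \cite{Bhatt_Lurie_A_Riemann-Hilbert_correspondence_in_positive_characteristic}, or deduced directly from universal injectivity of the diagonal. Granting it, the map $A^{1/p^\infty}\to B^{1/p^\infty}$ is injective because $A\subseteq B$ are reduced, and surjective because the class of any $b\in B$ at stage $m$ of the colimit coincides with the class of $b^{p^n}\in A$ at stage $m+n$; hence it is an isomorphism, and gluing over an affine cover yields $\cO_X^{1/p^\infty}\xrightarrow{\sim} f_*\cO_Y^{1/p^\infty}$. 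I expect that locating and citing this last fact cleanly is the only genuine obstacle; the minor points to keep in mind are that the abstract equivalence of étale sites induces precisely the natural pullback on cohomology, and that on the $\bQ_p$ side the passage to $\varprojlim_n$ followed by inverting $p$ introduces no discrepancy (which reduces to the compatibility of the level--$n$ isomorphisms already noted).
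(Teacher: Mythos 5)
Your argument for the \'etale cohomology statements is the same as the paper's, which simply invokes topological invariance of the \'etale site (\stacksproj{03SI}); your extra care with the $\varprojlim_n$ and inverting $p$ for the $\bQ_p$ statement is welcome but routine. For the perfection statement, however, you take a genuinely different route. The paper's proof is a two-line deduction: $\bF_{p,X}\to f_*\bF_{p,Y}$ is an isomorphism (again by topological invariance plus exactness of $f_*$ for a finite morphism), and the Riemann--Hilbert correspondence of Bhatt--Lurie \cite[Theorem 10.2.7 and Corollary 10.5.6]{Bhatt_Lurie_A_Riemann-Hilbert_correspondence_in_positive_characteristic} transports this to the statement $\cO_X^{1/p^\infty}\isomto f_*\cO_Y^{1/p^\infty}$, since the constant sheaf $\bF_p$ corresponds to the perfected structure sheaf and the correspondence is compatible with finite pushforward. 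You instead argue directly at the level of rings: reduce to the affine, reduced case and use that a finite universal homeomorphism in characteristic $p$ is inverted by perfection. Your reductions (perfection kills nilpotents; $\ker(A\to B)$ is nilpotent by surjectivity on spectra) are correct, and the one fact you flag as needing a clean citation --- that every $b\in B$ satisfies $b^{p^n}\in A$ for some $n$, equivalently that some power of Frobenius factors through $A\to B$ --- is exactly \stacksproj{0CNF}, which this paper already cites for the same purpose in the proof of \autoref{lem:normal_geom_F_pure_is_geom_normal} (see also Bhatt--Scholze's result that perfection inverts universal homeomorphisms); so your flagged gap closes with a reference the paper has on hand. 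I would only caution that your parenthetical alternative, deducing this from universal injectivity of the diagonal alone, does not obviously work ($b\otimes 1=1\otimes b$ in $B\otimes_A B$ need not force $b$ into the image of $A$ without further input), but you do not rely on it. The trade-off: the paper's route is shorter but leans on heavy machinery already used elsewhere in the article, while yours is elementary, self-contained, and makes visible where finiteness and characteristic $p$ enter.
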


\begin{proof}
    The statement about cohomology groups is immediate from \stacksproj{03SI}. To obtain the last statement, note that it is immediate that $\bF_{p, X} \to f_*\bF_{p, Y}$ is an isomorphism. We then conclude by the Riemann--Hilbert correspondence (\cite[Theorem 10.2.7 and Corollary 10.5.6]{Bhatt_Lurie_A_Riemann-Hilbert_correspondence_in_positive_characteristic}).
\end{proof}

\begin{lemma}\label{lem:wo_o_cohomology_ordinary}
    Let $X$ be a proper variety of dimension $d$ defined over an algebraically closed field $k$ of characteristic $p>0$. Assume that the natural Frobenius action on $H^d(X,\cO_X)$ is bijective, and that $H^{d-1}(X,W_n\cO_X) \to H^{d-1}(X,W_{n-1}\cO_X)$ is surjective for every $n$. Then \[ \dim_{\bQ_p}H^d_{\et}(X, \bQ_p) = \dim_k H^d(X, \cO_X). \]
\end{lemma}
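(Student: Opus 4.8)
The plan is to feed the $p$-adic Artin--Schreier--Witt sequence into the situation and then control, separately, the two contributions it produces to $H^d_{\et}(X,\bQ_p)$: one is killed by the hypothesis on $H^{d-1}$, the other is computed by the hypothesis on $H^{d}$.

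\textbf{Setup.} Recall (e.g.\ from \cite[Section 3.3]{Patakfalvi_Zdanowicz_Ordinary}, via the $p$-adic Artin--Schreier--Witt sequence $0\to\bZ_{p}\to W\cO_X\xrightarrow{1-F}W\cO_X\to 0$) that $R\Gamma_{\et}(X,\bZ_p)$ is the fibre of $1-F$ on $R\Gamma(X,W\cO_X)$, where $F$ denotes the Witt-vector Frobenius. Since $\dim X=d$ one has $H^{d+1}(X,\cO_X)=0$, hence $H^{d+1}(X,W_n\cO_X)=0$ for all $n$ by dévissage along the Verschiebung filtration (graded pieces $\cong\cO_X$), hence $H^{d+1}(X,W\cO_{X,\bQ})=0$. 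Rationalising the long exact sequence of the fibre therefore yields
\[
0\longrightarrow\coker\!\big(1-F\mid H^{d-1}(X,W\cO_{X,\bQ})\big)\longrightarrow H^d_{\et}(X,\bQ_p)\longrightarrow\ker\!\big(1-F\mid H^d(X,W\cO_{X,\bQ})\big)\longrightarrow 0 ,
\]
and it remains to show that the left term vanishes and the right term has $\bQ_p$--dimension $h:=\dim_k H^d(X,\cO_X)$.

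\textbf{The cokernel vanishes.} By the theory of Witt-vector cohomology of (possibly singular) proper varieties of Berthelot--Bloch--Esnault (see also \cite{Rulling_Chatzistamatiou_Hodge_Witt_and_Witt_rational}), $H^{d-1}(X,W\cO_{X,\bQ})$ is a finite-dimensional vector space over $K:=\Frac(W(k))$ carrying a $\sigma$-semilinear $F$ all of whose slopes lie in $[0,1)$. By Dieudonné--Manin over the algebraically closed field $k$ it decomposes into isotypic pieces: on a slope-$0$ (unit-root) piece $F$ is, in a suitable basis, the Frobenius $\sigma$ of $K$, and $\sigma-1$ is surjective on $K$ (solve $\sigma(x)-x=y$ by successive approximation, using that $t^p-t=\overline{y}$ has a root in $k$); on a slope-$\lambda>0$ piece $F$ is topologically nilpotent, so $1-F$ is invertible with inverse $\sum_{m\ge0}F^m$. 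Hence $1-F$ is surjective on $H^{d-1}(X,W\cO_{X,\bQ})$.

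\textbf{The kernel has dimension $h$.} Here both hypotheses enter; write $M_n:=H^d(X,W_n\cO_X)$. From the sheaf sequences $0\to W_{n-1}\cO_X\xrightarrow{V}W_n\cO_X\to\cO_X\to 0$ and $0\to\cO_X\xrightarrow{V^{n-1}}W_n\cO_X\to W_{n-1}\cO_X\to 0$ together with $H^{d+1}(X,\cO_X)=0$, the surjectivity hypothesis (equivalently, after iterating, $H^{d-1}(W_n\cO_X)\twoheadrightarrow H^{d-1}(\cO_X)$) is exactly what kills the relevant connecting map, producing $F$-equivariant short exact sequences $0\to M_{n-1}\xrightarrow{V}M_n\to H^d(X,\cO_X)\to 0$ and the surjectivity of the restrictions $M_n\to M_{n-1}$. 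An induction on $n$ (base case: the natural Frobenius on $M_1=H^d(X,\cO_X)$ is bijective) and the five lemma give that $F$ is bijective on each $M_n$, while the sequences give $\ell_{W_n(k)}(M_n)=nh$. Since $p=FV=VF$ on Witt vectors and $F$ is bijective with the restriction $M_n\to M_{n-1}$ onto, $pM_n$ is the image of $M_{n-1}$, so $M_n/pM_n\cong H^d(X,\cO_X)\cong k^{h}$; as $W_n(k)=W(k)/p^n$ is a chain ring, a module of length $nh$ needing only $h$ generators is free of rank $h$. The transition maps being onto, the limit $M:=H^d(X,W\cO_X)=\varprojlim_n M_n$ is free of rank $h$ over $W(k)$ with bijective $\sigma$-semilinear $F$, i.e.\ a unit-root $F$-crystal; over the algebraically closed $k$ this crystal is trivial, so $M\otimes K\cong(K,\sigma)^{\oplus h}$ and $\ker(1-F\mid M\otimes K)=\ker(\sigma-1\mid K)^{\oplus h}=\bQ_p^{\oplus h}$. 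Together with the previous step this gives $\dim_{\bQ_p}H^d_{\et}(X,\bQ_p)=h$.

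\textbf{Main difficulty.} The substance sits in the last step: the hypothesis on $H^{d-1}$ is precisely what rules out the ``torsion'' obstructing $H^d(X,W\cO_X)$ from being free of the expected rank over $W(k)$, and bijectivity of Frobenius on $H^d(X,\cO_X)$ is precisely what makes that free module unit-root; the vanishing of the other contribution is then formal once one imports the finiteness and slope bounds for Witt-vector cohomology of the (singular) proper $X$. The fiddly points are the Witt-vector identities and $F$-equivariance of all the sequences involved, and the Mittag--Leffler/$\varprojlim$ bookkeeping needed to descend from finite level to $W\cO_{X,\bQ}$.
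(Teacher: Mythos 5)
Your proposal is correct and carries out in detail exactly the Artin--Schreier--Witt argument that the paper's one-line proof delegates to \cite[Proposition 4.1]{Patakfalvi_Zdanowicz_Ordinary}: the slope--$<1$ bound of Berthelot--Bloch--Esnault kills the $H^{d-1}$ contribution formally, while the two hypotheses make $H^d(X,W\cO_X)$ a trivial unit-root crystal of rank $h$. The only (harmless) slip is attributing the surjectivity of the restriction $M_n\to M_{n-1}$ to the hypothesis --- in top degree $d$ it is automatic from $H^{d+1}(X,\cO_X)=0$; the hypothesis is genuinely needed only for the vanishing of the connecting maps into $H^d$ and for the Mittag--Leffler condition killing $\varprojlim^1 H^{d-1}(X,W_n\cO_X)$.
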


\begin{proof}
    Although \cite[Proposition 4.1]{Patakfalvi_Zdanowicz_Ordinary} claims to only prove the case when $H^{d - 1}(X, \cO_X) = 0$, their proof also works under our weaker hypothesis.
\end{proof}

\begin{remark}\label{rem:auotmatic_hyp_wo_o_cohomology_ordinary}
    The surjectivity assumption in Witt cohomology groups in \autoref{lem:wo_o_cohomology_ordinary} is automatic when $X$ is a curve, or when $H^{d - 1}(X, \cO_X) = 0$.
\end{remark}

\begin{lemma}\label{lem:proper_base_change}
    Let \[\begin{tikzcd}
    Y \arrow[r, "g'"] \arrow[d, "f'"'] & X \arrow[d, "f"]                \\
    T \arrow[r, "g"']                   & S                
    \end{tikzcd} \] be a pullback diagram of schemes, where $f$ is proper. Then for all $i \geq 0$, there are natural isomorphisms \[ g^*R^if_*\bF_p \cong R^if'_*\bF_p. \] Furthermore, the same result holds for $\bF_p$ replaced by $\bQ_p$. 

    In particular, if $\pi \colon Y \to X$ is a birational proper morphism with exceptional locus $E \subseteq Y$, then for all $i > 0$, \[ R^i\pi_*\bF_p = R^i\pi_{E, *} \bF_p \] (and similarly for $\bQ_p$), where $\pi_E \colon E \to \pi(E)$ is the induced map.
\end{lemma}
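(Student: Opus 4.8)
The statement splits naturally into the general base-change claim and the particular case about birational morphisms, and the second follows from the first by a short manipulation, so I would organize the proof that way.

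For the general statement, the plan is to reduce to the classical proper base change theorem in étale cohomology. First I would recall that $\bF_{p,Y}$ is a torsion étale sheaf (it is $p$-torsion, and $p$ is invertible... no — here $p$ is *not* invertible, but the proper base change theorem for torsion sheaves holds for *all* torsion abelian sheaves, with no restriction on the characteristic; this is the version one must cite, e.g. \stacksproj{095T} or SGA 4). So for the Cartesian square with $f$ proper, the base change map $g^*R^if_*\bF_p \to R^if'_*\bF_p$ is an isomorphism by proper base change applied to the constant sheaf $\bF_p$ (using $g'^*\bF_{p,X} = \bF_{p,Y}$, which is automatic for constant sheaves). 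This handles $\bF_p$. For $\bQ_p$, I would invoke the construction of $R^if_*\bQ_p$ from \cite[Section 3.3]{Patakfalvi_Zdanowicz_Ordinary}: it is built as a (pro-)limit / localization of the $R^if_*\bZ/p^n$, and each of those satisfies proper base change by the same torsion statement applied to $\bZ/p^n$; since $g^*$ commutes with the relevant limits and with inverting $p$, the isomorphism passes to $\bQ_p$. The only care needed is to check compatibility of the base-change maps through the limit, which is formal.

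For the ``in particular'' clause, I would take the Cartesian square
\[\begin{tikzcd}
Y \arrow[r] \arrow[d, "\pi"'] & Y \arrow[d, "\pi"] \\
\pi(E) \arrow[r] & X
\end{tikzcd}\]
— wait, that is not quite the right square. Instead: let $j\colon \pi(E)\hookrightarrow X$ be the closed immersion of the (closed) image, let $Y' = \pi^{-1}(\pi(E))$ with its induced map $\pi'\colon Y'\to\pi(E)$, and apply the general statement to the Cartesian square with $g = j$. This gives $j^*R^i\pi_*\bF_p \cong R^i\pi'_*\bF_p$. Over the open complement $X\setminus\pi(E)$, the map $\pi$ is an isomorphism, so $R^i\pi_*\bF_p$ is supported on $\pi(E)$ for $i>0$, hence $R^i\pi_*\bF_p = j_*j^*R^i\pi_*\bF_p = j_*R^i\pi'_*\bF_p$ for $i>0$; abusing notation to identify a sheaf on $\pi(E)$ with its extension by zero, this reads $R^i\pi_*\bF_p = R^i\pi'_*\bF_p$. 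Finally I would replace $Y'$ by $E$ (with reduced structure): the inclusion $E\hookrightarrow Y'$ is a closed immersion which is a homeomorphism onto the underlying space of $Y'$ away from where $\pi$ is already an isomorphism — more precisely, $Y'$ and $E$ agree topologically over the locus where $\pi$ is not an isomorphism, and one invokes \autoref{lem:topological_invariance_of_etale_cohomology} (topological invariance of $\bF_p$-étale cohomology) to replace $Y'$ by $E$ and $\pi'$ by $\pi_E$. The same argument works verbatim for $\bQ_p$ using the $\bQ_p$ half of \autoref{lem:topological_invariance_of_etale_cohomology}.

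The main obstacle, such as it is, is purely bookkeeping: making sure the proper base change theorem is cited in the correct generality (torsion coefficients with no invertibility hypothesis — the naive ``$\ell\neq p$'' reflex is wrong here and would sink the argument), and then checking that the construction of $\bQ_p$-cohomology in \cite{Patakfalvi_Zdanowicz_Ordinary} is compatible with pullback along $g$ so that the base-change isomorphisms for $\bZ/p^n$ assemble into one for $\bQ_p$. Neither step is deep, but both must be stated carefully. Everything else — extension by zero, topological invariance, the identification $Y' \leftrightarrow E$ — is routine.
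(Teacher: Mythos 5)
Your proposal is correct and follows essentially the same route as the paper: reduce both statements to proper base change for the torsion sheaves $\bZ/p^n\bZ$ (Stacks 095T), pass to $\bQ_p$ formally via its construction from the $\bZ/p^n\bZ$, and deduce the ``in particular'' clause by base changing along the closed immersion $\pi(E)\hookrightarrow X$ together with the vanishing of $R^i\pi_*$ for $i>0$ away from $\pi(E)$. The paper's proof is just a terser version of yours; your added care about citing proper base change in the torsion (not $\ell\neq p$) form and about passing from $\pi^{-1}(\pi(E))$ to $E$ is sound.
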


\begin{proof}
    The last statement is an immediate consequence of the first statement about the base change isomorphism. Furthermore, by definition of $\bQ_p$, it is enough to show the base change isomorphism for the sheaves $\bZ/p^n\bZ$. This is a particular case of \stacksproj{095T}.
\end{proof}

Let us recall the following well--known fact.

\begin{lemma}\label{lem:baby_Riemann_Hilbert}
    Let $k$ be a separably closed field of characteristic $p > 0$, and let $X$ be a proper $k$--scheme. Then for all $i \geq 0$ there is a natural inclusion $H^i_{\et}(X, \bF_p) \inc H^i(X, \cO_X)$. If $k = H^0(X, \cO_X)$ and $X$ is globally $F$--split, then this inclusion induces an isomorphism \[ H^i_{\et}(X, \bF_p) \otimes_{\bF_p} k \cong H^i(X, \cO_X). \] In particular, $\dim_k H^i(X, \cO_X) = \dim_{\bF_p}H^i_{\et}(X, \bF_p)$.
\end{lemma}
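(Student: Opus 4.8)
The plan is to derive everything from the additive Artin--Schreier sequence of étale sheaves on $X$,
\[
0 \to \bF_{p,X} \to \cO_X \xrightarrow{\;F-1\;} \cO_X \to 0,
\]
where $F\colon \cO_X\to\cO_X$ denotes the $p$--power map $a\mapsto a^p$; this is exact on the étale site because $t^p - t = a$ is separable, hence étale--locally solvable. Taking étale cohomology, and using $H^j_{\et}(X,\cO_X) = H^j(X,\cO_X)$ for the quasi--coherent sheaf $\cO_X$, produces a long exact sequence exhibiting $H^i_{\et}(X,\bF_p)$ as an extension of $\ker\bigl(F-1\colon H^i(X,\cO_X)\to H^i(X,\cO_X)\bigr)$ by $\coker\bigl(F-1\colon H^{i-1}(X,\cO_X)\to H^{i-1}(X,\cO_X)\bigr)$. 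This is the elementary (``baby'') incarnation of the Riemann--Hilbert correspondence.

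First I would eliminate the cokernel terms. As $X$ is proper, each $V_j := H^j(X,\cO_X)$ is a finite--dimensional $k$--vector space on which $F$ acts $p$--linearly, and over the separably closed field $k$ the additive map $F - 1$ is surjective on any such $V_j$: indeed $F-1$ is an étale endomorphism of the connected vector group $V_j$ (its differential is $-\id$), hence surjective with $k$--rational fibres (they are reduced, being fibres of an étale map, and $k$ is separably closed). Consequently the long exact sequence degenerates into short exact sequences
\[
0 \to H^i_{\et}(X,\bF_p) \to H^i(X,\cO_X) \xrightarrow{\;F-1\;} H^i(X,\cO_X) \to 0,
\]
so that $H^i_{\et}(X,\bF_p) \cong H^i(X,\cO_X)^{F=1}$; this is the asserted natural inclusion, and notably this part uses only that $k$ is separably closed, not $F$--splitness.

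For the statement after tensoring with $k$, I would bring in global $F$--splitness. A splitting of the $p$--power ring map $\cO_X\to F_*\cO_X$ as $\cO_X$--modules induces, after applying $H^i(X,-)$, a retraction of the $p$--linear operator $F$ on $H^i(X,\cO_X)$, so this operator is injective. I would then reduce to the case $k = \overline{k}$: since $k$ is separably closed one has $\overline{k} = k^{1/p^{\infty}}$, so $X_{\overline{k}}\to X$ is a universal homeomorphism and by \autoref{lem:topological_invariance_of_etale_cohomology} leaves $H^i_{\et}(-,\bF_p)$ unchanged, while $H^i(X_{\overline{k}},\cO) = H^i(X,\cO_X)\otimes_k\overline{k}$, the scheme $X_{\overline{k}}$ remains proper and globally $F$--split, and $H^0(X_{\overline{k}},\cO) = \overline{k}$ when $H^0(X,\cO_X) = k$. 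Over an algebraically closed field an injective $p$--linear endomorphism of a finite--dimensional vector space is automatically bijective, and a finite--dimensional vector space carrying a bijective $p$--linear operator $F$ admits an $\bF_p$--basis of $F$--fixed vectors; hence the natural map $H^i(X,\cO_X)^{F=1}\otimes_{\bF_p}k\to H^i(X,\cO_X)$ is an isomorphism, and the two dimensions coincide (this last fact is classical; it also follows from a length/degree count for the finite étale homomorphism $F-1$ of $\bG_a^n$). Combining this with the short exact sequence above concludes the proof.

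The only delicate point is the reduction to an algebraically closed ground field: one must know that global $F$--splitness survives the purely inseparable extension $k\to\overline{k}$; equivalently, that the $p$--linear Frobenius on $H^i(X,\cO_X)$, which we only directly know to be injective over $k$, stays injective (hence becomes bijective) after $\otimes_k\overline{k}$. This is a standard stability property of Frobenius splittings; it can also be checked directly from the characterization of $F$--splitness as the surjectivity of an evaluation map on global sections of a coherent sheaf, which is compatible with the flat base change $k\to\overline{k}$. Everything else is a formal unwinding of the Artin--Schreier sequence together with elementary $p$--linear algebra.
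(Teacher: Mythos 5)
Your argument is correct, and its first half takes a genuinely different (and more classical) route than the paper. The paper obtains the identification $H^i_{\et}(X,\bF_p)\cong H^i(X,\cO_X)^{F=1}$ by invoking the Bhatt--Lurie Riemann--Hilbert correspondence for $\cO_X^{1/p^\infty}$ and then observing that the Frobenius fixed points of $H^i(X,\cO_X^{1/p^\infty})$ agree with those of $H^i(X,\cO_X)$; you instead run the Artin--Schreier long exact sequence directly, which forces you to supply the extra ingredient that the paper's reference hides, namely the surjectivity of the Lang-type map $F-1$ on each finite-dimensional cohomology group over a separably closed field. Your justification of that surjectivity (viewing $H^j(X,\cO_X)$ as a vector group, noting $F-1$ is an étale homomorphism with differential $-\id$ and finite étale kernel, so its fibres over $k$-points acquire points over $k=k^{\sep}$) is sound, and it makes the ``baby'' nature of the statement transparent at the cost of a short algebraic-groups digression. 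The second half of your proof --- injectivity of the $p$-linear Frobenius from the splitting, reduction to $\overline{k}$ via universal homeomorphism on the étale side and flat base change on the coherent side, preservation of global $F$-splitness under the purely inseparable extension $k\to\overline{k}$ (the paper cites \cite[Lemma 2.4]{Gongyo_Li_Patakfalvi_Schwede_Tanaka_Zong_On_rational_connectedness_of_globally_F_regular_threefolds} for this), and the classical fact that a bijective $p$-linear operator over an algebraically closed field admits a basis of fixed vectors (Mumford's Corollary on p.~143) --- coincides with the paper's. No gaps.
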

\begin{proof}
    By \cite[Theorem 10.5.5]{Bhatt_Lurie_A_Riemann-Hilbert_correspondence_in_positive_characteristic} and the fact that $k$ is separably closed (so its étale site is trivial), we conclude that \[ H^i_{\et}(X, \bF_p) \cong \set{x \in H^i(X, \cO_X^{1/p^{\infty}})}{\phi(x) = x}, \] where $\phi$ denotes the action induced by the Frobenius on $\cO_X$. Note that \[ \set{x \in H^i(X, \cO_X^{1/p^{\infty}})}{\phi(x) = x} = \set{x \in H^i(X, \cO_X)}{\phi(x) = x}, \] by construction of $\cO_X^{1/p^{\infty}}$ (see also \cite[Proposition 3.2.9]{Bhatt_Lurie_A_Riemann-Hilbert_correspondence_in_positive_characteristic}), so the inclusion part of the statement is proven.
    
    To prove the second part of the statement, note that we may assume that $k$ is algebraically closed. Indeed, $\bF_p$--cohomology will not change by \autoref{lem:topological_invariance_of_etale_cohomology}, and $X_{\overline{k}}$ is again globally $F$--split by \cite[Lemma 2.4]{Gongyo_Li_Patakfalvi_Schwede_Tanaka_Zong_On_rational_connectedness_of_globally_F_regular_threefolds}. 
    
    We can then argue as follows: since the action of the Frobenius on $H^i(X, \cO_X)$ is injective by the global $F$--splitness assumption, we obtain by \cite[Corollary p.143]{Mumford_Abelian_varieties} that $H^i(X, \cO_X)$ has a $k$--basis of elements fixed by $\phi$. This finishes the proof.
\end{proof}

\begin{remark}\label{we_suck_at_writing}
    Note that the proof of the fact that \[ H^i_{\et}(X, \bF_p) \cong \set{x \in H^i(X, \cO_X)}{\phi(x) = x} \inc H^i(X, \cO_X) \] did not use that $X$ was globally $F$--split. In particular, this shows that for any morphism $f \colon Y \to X$ of proper $k$--schemes, if $f$ induces an isomorphism $H^i(X, \cO_X) \to H^i(Y, \cO_Y)$ for some $i \geq 0$, then it also induces an isomorphism $H^i(X, \bF_p) \to H^i(Y, \bF_p)$. It follows formally that the same holds for $\bF_p$ replaced by $\bQ_p$.
\end{remark}

The following result is the main trick that allows us to prove the main theorem for fourfolds.
\begin{lemma}[{\cite[Proposition 8.4]{Chambert_Loir_Points_Rationnels_et_Groupes_Fondamentaux}}]\label{lem:euler_char}
    Let $X$ be a proper scheme over an algebraically closed field $k$. Then \[ \chi_{\et}(X, \bF_p) = \chi_{\et}(X, \bQ_p). \]
\end{lemma}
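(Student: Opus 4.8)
The plan is to interpolate between $\bF_p$ and $\bQ_p$ through the constant sheaves $\bZ/p^n\bZ$. For every $n \geq 1$, the short exact sequence of constructible étale sheaves
\[ 0 \longrightarrow \bZ/p\bZ \xrightarrow{p^{n-1}} \bZ/p^n\bZ \longrightarrow \bZ/p^{n-1}\bZ \longrightarrow 0 \]
induces a long exact sequence in étale cohomology, and since $X$ is proper over $k$ all the groups $H^i_{\et}(X, \bZ/p^m\bZ)$ are finite (finiteness of étale cohomology with constructible torsion coefficients on proper schemes). Hence the length-theoretic Euler characteristic $\chi^{\mathrm{len}}(X, \bZ/p^m\bZ) := \sum_i (-1)^i \length_{\bZ_p} H^i_{\et}(X, \bZ/p^m\bZ)$ is well defined and additive along short exact sequences of sheaves, and induction on $n$ gives $\chi^{\mathrm{len}}(X, \bZ/p^n\bZ) = n \cdot \chi^{\mathrm{len}}(X, \bZ/p\bZ) = n \cdot \chi_{\et}(X, \bF_p)$, the last equality because the $\bZ_p$--length of an $\bF_p$--vector space equals its $\bF_p$--dimension.

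I would then pass to the inverse limit. By the definition of $\bQ_p$--cohomology from \cite[Section 3.3]{Patakfalvi_Zdanowicz_Ordinary}, $H^i_{\et}(X, \bQ_p) = \bigl(\varprojlim_n H^i_{\et}(X, \bZ/p^n\bZ)\bigr) \otimes_{\bZ_p} \bQ_p$. The tower $\{H^i_{\et}(X, \bZ/p^n\bZ)\}_n$ has finite terms, so it is Mittag--Leffler and $\varprojlim = R\varprojlim$ on it; writing $H^i_{\et}(X, \bZ_p) := \varprojlim_n H^i_{\et}(X, \bZ/p^n\bZ) \cong \bZ_p^{\oplus b_i} \oplus T_i$ with $T_i$ finite, we get $b_i = \dim_{\bQ_p} H^i_{\et}(X, \bQ_p)$. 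The universal-coefficient short exact sequences
\[ 0 \longrightarrow H^i_{\et}(X, \bZ_p)/p^n \longrightarrow H^i_{\et}(X, \bZ/p^n\bZ) \longrightarrow H^{i+1}_{\et}(X, \bZ_p)[p^n] \longrightarrow 0 \]
(coming from $R\Gamma_{\et}(X, \bZ/p^n\bZ) \simeq R\Gamma_{\et}(X, \bZ_p) \otimes^{\bL}_{\bZ_p} \bZ/p^n\bZ$) show that, for $n$ bigger than the exponent of every $T_i$, one has $\length_{\bZ_p} H^i_{\et}(X, \bZ/p^n\bZ) = n b_i + \length T_i + \length T_{i+1}$. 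Summing with signs, the torsion contributions telescope away and $\chi^{\mathrm{len}}(X, \bZ/p^n\bZ) = n \sum_i (-1)^i b_i = n \cdot \chi_{\et}(X, \bQ_p)$ for $n \gg 0$. Comparing with the first paragraph and dividing by $n$ yields $\chi_{\et}(X, \bF_p) = \chi_{\et}(X, \bQ_p)$.

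Equivalently, and more conceptually: $R\Gamma_{\et}(X, \bZ_p) := R\varprojlim_n R\Gamma_{\et}(X, \bZ/p^n\bZ)$ is a perfect complex of $\bZ_p$--modules (each $R\Gamma_{\et}(X, \bZ/p^n\bZ)$ is perfect over $\bZ/p^n\bZ$ by the finiteness above), with $R\Gamma_{\et}(X, \bZ_p) \otimes^{\bL}_{\bZ_p} \bF_p \simeq R\Gamma_{\et}(X, \bF_p)$ and $R\Gamma_{\et}(X, \bZ_p) \otimes_{\bZ_p} \bQ_p \simeq R\Gamma_{\et}(X, \bQ_p)$; representing it by a bounded complex $P^\bullet$ of finite free $\bZ_p$--modules, both $\chi_{\et}(X, \bF_p)$ and $\chi_{\et}(X, \bQ_p)$ compute $\sum_i (-1)^i \rank_{\bZ_p} P^i$.

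The difficulty is not geometric but lies in setting up the $p$--adic formalism ($\varprojlim$ versus $R\varprojlim$, the universal-coefficient sequences, and the identity $H^i_{\et}(X, \bQ_p) = H^i_{\et}(X, \bZ_p) \otimes_{\bZ_p} \bQ_p$) compatibly with the definitions of \cite{Patakfalvi_Zdanowicz_Ordinary}, and in invoking the finiteness of $p$--torsion étale cohomology of proper schemes, which is classical but genuinely nontrivial in characteristic $p$.
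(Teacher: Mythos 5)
Your proposal is correct, and the ``more conceptual'' formulation in your last paragraph is exactly the paper's argument: finiteness of $p$--torsion étale cohomology (which the paper gets from the Artin--Schreier sequence), the identification $\RGamma_{\et}(X,\bZ_p)\otimes^L\bF_p\cong\RGamma_{\et}(X,\bF_p)$ via \stacksproj{0F0F} and \stacksproj{0CQF}, and representation by a bounded complex of finite free $\bZ_p$--modules so that both Euler characteristics equal $\sum_i(-1)^i\rank_{\bZ_p}V^i$. Your first two paragraphs are just an elementary unwinding of the same perfect--complex statement via length counting and the universal--coefficient sequences, and the telescoping there is carried out correctly.
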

\begin{proof}
    Let us explain carefully the argument in \cite{Chambert_Loir_Points_Rationnels_et_Groupes_Fondamentaux}. By the Artin--Schreier sequence, each $H^i_{\et}(X, \bF_p)$ has finite dimension and $H^i_{\et}(X, \bF_p) = 0$ for $i > \dim(X)$. Now, we know from \stacksproj{0F0F} that for all $n \geq 0$, we have a natural isomorphism \[ \RGamma_{\et}(X, \bZ/p^{n + 1}\bZ) \otimes^L \bZ/p^n\bZ \cong \RGamma_{\et}(X, \bZ/p^n\bZ). \] Thus, we deduce from \stacksproj{0CQF} that each $H^i_{\et}(X, \bZ_p)$ has finite rank, and \[ \RGamma_{\et}(X, \bZ_p) \otimes^L \bF_p \cong \RGamma_{\et}(X, \bF_p). \] Since $\bZ_p$ is a regular ring, we know by \stacksproj{066Z} that there exists a bounded complex $V^{\bullet}$ of finite free $\bZ_p$--modules which is quasi--isomorphic to $\RGamma_{\et}(X, \bZ_p)$, so \[ V^{\bullet} \otimes_{\bZ_p} \bF_p \cong \RGamma_{\et}(X, \bF_p) \mbox{ \: \: \: and \: \: \:} V^{\bullet} \otimes_{\bZ_p} \bQ_p \cong \RGamma_{\et}(X, \bQ_p). \] Thus, we deduce that \[ \chi_{\et}(X, \bF_p) = \sum_i (-1)^i\dim_{\bF_p}H^i_{\et}(X, \bF_p) = \sum_i (-1)^i\dim_{\bF_p}(V^i \otimes \bF_p) = \sum_i(-1)^i\rank_{\bZ_p}(V^i), \] and the same holds for $\chi_{\et}(X, \bQ_p)$.
\end{proof}

\begin{remark}
    Although we only stated the result for $\bF_p$ and $\bQ_p$, the same proof shows that for any constructible $\bZ_p$--sheaf $\cF$ on $X$ as above, we have $\chi_{\et}(X, \cF \otimes_{\bZ_p}^L \bF_p) = \chi_{\et}(X, \cF \otimes_{\bZ_p} \bQ_p)$.
\end{remark}

\begin{definition} \label{def: nilp_rat_sing}
    We say that a normal $\bF_p$--scheme $X$ has \emph{$\mathbb{F}_p$--rational singularities} (resp.\ \emph{$\mathbb{Q}_p$--rational singularities}) if there exists a projective resolution of singularities $\pi \colon Y \to X$ such that $R^i\pi_*\bF_p = 0$ (resp.\ $R^i\pi_*\bQ_p = 0$) for all $i > 0$.
\end{definition}

\begin{remark}
    As for $W\cO$--rational singularities, $\bQ_p$--rational singularities can be defined via regular alterations instead of resolution of singularities. However, in our case, we will need resolution of singularities for other purposes, so this is why we define it this way.
\end{remark}

\subsection{Dual complexes}
In this section, we define the dual complex $\cD(X)$ of a pure dimensional scheme $X$ (provided it satisfies $(\star)$, see \autoref{def:dual_complex_condition_star}). It is a topological space encoding the combinatorial data of how the irreducible components of $X$ intersect. As we will see, if arbitrary intersections of irreducible components have no higher cohomology, then the $\bQ_p$-- and $\bF_p$--cohomology groups of $X$ can be identified with the corresponding singular cohomology groups of the dual complex.

\begin{definition}\label{def:dual_complex_condition_star}
    Let $X = \bigcup_{i \in [n]} X_i$ be a pure dimensional scheme, with $n$ irreducible components $X_i$. For a subset $J\subseteq [n]$, denote $X_J \coloneqq \bigcap_{j \in J} X_j$.  We say that $X$ satisfies the condition $(\star)$ if the following holds: for every $\emptyset\neq J \subseteq [n]$, if $X_J$ is non empty, then every connected component of $X_J$ is irreducible and has codimension $|J| - 1$ in $X$. In this case, an irreducible closed subset $Z\subseteq X$ is called a \emph{stratum of codimension $r$}, if it is an irreducible component of $X_J$ for some $J\subseteq [n]$ with $|J|=r+1$. Note that under the given assumption, $J$ is uniquely determined by the stratum $Z$, and so we denote it by $J(Z)\coloneqq J$.
\end{definition}

An immediate consequence of $(\star)$ is that $X_J$ is the disjoint union of its irreducible components. Therefore, for any stratum $Z$ of codimension $r$ and every $j \in J(Z)$, $Z$ lies in a unique irreducible component of $X_{J(Z) \setminus \{j\}}$, i.e.\ a stratum $Z'$ of codimension $r-1$ with $J(Z')=J(Z)\setminus\{j\}$. While these incidence relations don't necessarily define an abstract simplicial complex (because two distinct strata of codimension $r$ could lie in the same set of strata of codimension $r-1$), they do in fact define a $\Delta$--set (see Section 2 in \cite{Ranicki_Weiss_On_the_algebraic_L_theory_of_Delta-sets}) and thus a $\Delta$--complex (see \cite[Chapter 2.1]{Hatcher_algebraic_topology}).

\begin{definition}[{\cite[Definition 8]{De_Fernex_Kollar_Xu_The_dual_complex_of_sings}}]\label{def:dual_complex_pure_dim}
Let $X=\bigcup_{i\in[n]} X_i$ be a pure dimensional scheme satisfying $(\star)$. The \textit{dual complex} $\cD(X)$ of $X$ is the $\Delta$--complex constructed as follows:
\begin{itemize}
    \item The $r$--dimensional simplices of $\cD(X)$ are indexed by the strata of codimension $r$ of $X$.
    \item Let $\sigma_Z\subseteq\bR^{r+1}$ be the $r$--dimensional simplex corresponding to some stratum $Z$. The vertices of $\sigma_Z$ are identified with $J(Z)\subseteq[n]$ in an order preserving way.
    \item Let $\sigma_Z$ be an $r$--dimensional simplex corresponding to some stratum $Z$ and $J(Z)=\{j_0,\ldots,j_r\}$ with $j_0<\cdots<j_r$ the corresponding index set. For $0\leq i\leq r$ arbitrary, let $Z'$ be the unique stratum of codimension $r-1$ containing $Z$ such that $J(Z')=J(Z)\setminus\{j_i\}$. We then identify the face of $\sigma_Z$ whose vertices are $J(Z)\setminus\{j_i\}$ with the $(r-1)$--dimensional simplex $\sigma_{Z'}$, so that the vertices match.
\end{itemize}
\end{definition}

\begin{remark}
\begin{itemize}
    \item Note that $\cD(X)$ is the topological realization of the abstract $\Delta$--set which is described by the above incidence relations (see \cite[Definition 2.2]{Ranicki_Weiss_On_the_algebraic_L_theory_of_Delta-sets}). As the topological space $\cD(X)$ doesn't depend on the ordering of the irreducible components, we will mostly refrain from putting an arbitrary order on them, and just work with arbitrary index sets.
    \item Note that contrary to \cite[Definition 8]{De_Fernex_Kollar_Xu_The_dual_complex_of_sings}, we don't impose the irreducible components of $X_J$ to be normal. This is to include the case of low characteristics: the dual complex which will be of interest for us is $\cD(E)$ for some threefold dlt modification $(Y,E)$ as in \autoref{existence_dlt_modifications}, and we do not know whether the components of $E$ are normal (see \cite{Cascini_Tanaka_Purely_log_terminal_3folds_with_non_normal_centers_in_char_2, Bernasconi_Non-normal_purely_log_terminal_centres}). However, \autoref{prop:dlt_structure}.\autoref{prop:dlt_struct_geometry} still ensures $(\star)$ in this case of interest, and with the weak normalization explored in \autoref{subsection:MMP} we can bypass the potential issues arising from non-normality.
\end{itemize}
\end{remark}

\begin{lemma}\label{lem:dlt_dual_complex}
Let $(Y,E)$ be a dlt pair, where $E=\sum_i E_i$ is reduced and each $E_i$ is $\bQ$--Cartier. Then $E$ satisfies $(\star)$. In particular, $\cD(E)$ is well--defined.
\end{lemma}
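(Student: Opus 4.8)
The plan is to reduce everything to \autoref{prop:dlt_structure}, specifically part \autoref{prop:dlt_struct_geometry}, which already packages the geometric input we need about intersections of the $\bQ$--Cartier components of a dlt boundary, and which crucially does \emph{not} require $K_Y+E$ to be Cartier. Our hypotheses (namely $(Y,E)$ dlt, $E=\sum_i E_i$ reduced, each $E_i$ prime and $\bQ$--Cartier) are exactly those of that proposition, so it applies verbatim.

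First I would record that $E=\bigcup_i E_i$ is a pure dimensional scheme: each $E_i$ is a prime divisor on the normal variety $Y$, hence of dimension $\dim Y-1$, so $E$ is of the shape required in \autoref{def:dual_complex_condition_star}. Next, fix a nonempty $J\subseteq[n]$ such that $E_J=\bigcap_{j\in J}E_j$ is non-empty. By \autoref{prop:dlt_structure}.\autoref{prop:dlt_struct_geometry}, $E_J$ has pure codimension $|J|$ in $Y$; since $E$ itself has pure codimension $1$ in $Y$ and $E_J\subseteq E$, this translates into $E_J$ having pure codimension $|J|-1$ in $E$, which is precisely the codimension requirement in condition $(\star)$.

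The only remaining point is that every connected component of $E_J$ is irreducible. For this I would again appeal to \autoref{prop:dlt_structure}.\autoref{prop:dlt_struct_geometry}, which asserts that $E_J$ is normal up to universal homeomorphism, i.e.\ there is a universal homeomorphism $W\to E_J$ with $W$ normal. A normal scheme is locally irreducible, hence is the disjoint union of its (irreducible) connected components; since a universal homeomorphism is in particular a homeomorphism of underlying topological spaces, the connected components of $E_J$ are exactly the images of those of $W$, each of which is irreducible as the continuous image of an irreducible space. Combining the two paragraphs, every connected component of every non-empty $E_J$ is irreducible of codimension $|J|-1$ in $E$, which is condition $(\star)$; then $\cD(E)$ is well-defined by \autoref{def:dual_complex_pure_dim}.

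I do not expect a genuine obstacle here: all the real work has been front-loaded into \autoref{prop:dlt_structure} (whose proof is the delicate part, using \autoref{cor:strata_of_dlt_locus_is_S_2_up_to_univ_homeo} together with the localization to a dlt surface pair). The only mild care needed is the bookkeeping between ``codimension in $Y$'' and ``codimension in $E$'', and the fact that we only have ``normal up to universal homeomorphism'' rather than honest normality — but since $(\star)$ is purely topological, speaking only of connected components and codimensions, replacing $E_J$ by any scheme universally homeomorphic to it changes nothing.
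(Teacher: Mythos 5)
Your proof is correct and follows exactly the route the paper takes: the paper's entire proof of this lemma is the single line ``This is \autoref{prop:dlt_structure}.\autoref{prop:dlt_struct_geometry}.'' You have merely (and correctly) unpacked the two small points the paper leaves implicit, namely the shift from codimension in $Y$ to codimension in $E$, and the fact that normality up to universal homeomorphism forces the connected components of each $E_J$ to be irreducible.
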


\begin{proof}
    This is \autoref{prop:dlt_structure}.\autoref{prop:dlt_struct_geometry}.
\end{proof}

To relate $\bQ_p$-- and $\bF_p$--cohomology of $X$ with singular cohomology of $\cD(X)$, we need to ``resolve'' a sheaf on $X$ by its restrictions to the strata. More precisely, if $\cF$ is a sheaf on $X$, we make use of the complex
\begin{align}\label{ex_seq_et_sheaves}
    0\to\cF\to\bigoplus_i \iota_*\cF|_{X_i}\to\bigoplus_{i<j}\iota_*\cF|_{X_{ij}}\to\cdots\to\bigoplus_{\substack{J\subseteq[n] \\ |J|=r}}\iota_*\cF|_{X_J}\to\cdots
\end{align}
where for $J,J'\subseteq[n]$ with $|J|=r+1$ and $|J'|=r$ the map $\iota_*\cF|_{X_{J'}}\to \iota_*\cF|_{X_{J}}$ is $0$ if $J'\not\subseteq J$ and $(-1)^{i-1}$ times the restriction map if $J=\{j_0<\cdots<j_i<\cdots<j_r\}$ and $J'=J\setminus\{j_i\}$. One easily checks that this is a complex, but for arbitrary $\cF$ and $X$ it may not be exact. However, we will often use the following.
\begin{lemma}\label{lem:exact_dual_complex}
    Let $X$ be a scheme and let $X=\bigcup_{i\in[n]}X_i$ be any cover by closed subschemes. Then the sequences of étale sheaves
    \begin{align*}
        0\to\bF_{p,X}\to\bigoplus_i \bF_{p,X_i}\to\bigoplus_{i<j}\bF_{p,X_{ij}}\to\cdots\to\bigoplus_{\substack{J\subseteq[n] \\ |J|=r}}\bF_{p,X_J}\to\cdots
    \end{align*}
    and
    \begin{align*}
        0\to\bQ_{p,X}\to\bigoplus_i \bQ_{p,X_i}\to\bigoplus_{i<j}\bQ_{p,X_{ij}}\to\cdots\to\bigoplus_{\substack{J\subseteq[n] \\ |J|=r}}\bQ_{p,X_J}\to\cdots.
    \end{align*}
    as in \autoref{ex_seq_et_sheaves} are exact.
\end{lemma}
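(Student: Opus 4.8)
The plan is to verify exactness stalk by stalk on the small étale site of $X$, which turns the problem into an elementary combinatorial statement about finite sets, and then to deduce the $\bQ_p$--case formally from its $\bZ/p^n\bZ$--analogues.

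First I would unwind the notation: in \autoref{ex_seq_et_sheaves} the symbol $\bF_{p,X_J}$ (and likewise the $\bZ/p^n\bZ$-- and $\bQ_p$--terms) really denotes the pushforward $\iota_{J,*}$ along the closed immersion $\iota_J\colon X_J\hookrightarrow X$. A complex of abelian étale sheaves is exact exactly when it is exact on stalks at every geometric point, so fix a geometric point $\ox$ of $X$ and set $S\coloneqq\{\,i\in[n]\mid \ox\in X_i\,\}$. Since the $X_i$ cover $X$, the set $S$ is nonempty, and since each $\iota_J$ is a closed immersion, $(\iota_{J,*}\bF_{p,X_J})_{\ox}=\bF_p$ when $J\subseteq S$ and $0$ otherwise. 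Thus the stalk at $\ox$ of the first complex in the statement is precisely the augmented simplicial cochain complex, with coefficients in $\bF_p$, of the full simplex on the nonempty vertex set $S$:
\[
0\to\bF_p\to\bigoplus_{i\in S}\bF_p\to\bigoplus_{\substack{J\subseteq S\\ |J|=2}}\bF_p\to\cdots .
\]

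Next I would invoke the standard fact that this complex is acyclic for every nonempty finite $S$ and every coefficient group: it computes the reduced cohomology of a contractible space, and a contracting homotopy can be written down explicitly by coning off a fixed element $i_0\in S$ (send the $J$--summand identically onto the $(J\cup\{i_0\})$--summand when $i_0\notin J$, and to zero otherwise, with the usual alternating signs). This establishes exactness of the $\bF_p$--sequence; and since the argument never used anything about the coefficient group, replacing $\bF_p$ by $\bZ/p^n\bZ$ yields exactness of the corresponding sequence for every $n\geq 1$.

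For the $\bQ_p$--sequence, by the definition of $\bQ_p$ (cf.\ the proof of \autoref{lem:proper_base_change} and \cite[Section 3.3]{Patakfalvi_Zdanowicz_Ordinary}) it suffices to know the $\bZ/p^n\bZ$--sequences are exact and compatible in $n$: the transition maps are degreewise surjective, so the relevant inverse systems are Mittag--Leffler, and passing to the limit over $n$ and then inverting $p$ --- operations exact on systems of this kind --- gives the claim. I do not expect a real obstacle: the whole content is the elementary stalk computation above, and the only step needing a little care is checking that the model of $\bQ_p$--sheaves in use is compatible with this limit-and-localize reduction.
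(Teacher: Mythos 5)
Your proof is correct in substance but follows a different route from the paper's. The paper reduces to the case $n=2$ by the inductive argument of \cite[Corollary 2.3]{Berthelot_Bloch_Esnault_On_Witt_vector_cohomology_for_singular_varieties} and only then checks exactness at stalks, whereas you handle all $n$ at once: since each $\iota_J$ is a closed immersion, the stalk of the whole complex at a geometric point $\ox$ is the augmented simplicial cochain complex of the full simplex on the nonempty set $S=\{i : \ox\in X_i\}$, which is acyclic. This is arguably cleaner and more self-contained — it makes transparent that the only input is the contractibility of a nonempty simplex and works verbatim for any constant coefficient group — while the paper's version outsources the combinatorics to an existing reference. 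Two small points. First, your explicit homotopy is written in the wrong direction: for a cochain complex with differential raising degree, the contracting homotopy must lower degree, so it should send the $J$--summand to the $(J\setminus\{i_0\})$--summand when $i_0\in J$ (and to zero otherwise), not cone upward; the map you describe is its transpose, i.e.\ the homotopy for the dual chain complex. Second, your passage to $\bQ_p$ via degreewise surjectivity of the transition maps $\bZ/p^{n+1}\bZ\to\bZ/p^n\bZ$ and exactness of Mittag--Leffler limits followed by inverting $p$ is exactly the "formal consequence" the paper alludes to, and your caveat about matching the model of $\bQ_p$--sheaves from \cite[Section 3.3]{Patakfalvi_Zdanowicz_Ordinary} is the right thing to check; there is no obstacle there.
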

\begin{proof}
    Let us first show the statement for the sheaf $\bF_p$. By the same argument as in \cite[Corollary 2.3]{Berthelot_Bloch_Esnault_On_Witt_vector_cohomology_for_singular_varieties} we can reduce to the case $n=2$. In this case the statement is clear, as it can be checked e.g. at stalks.

    The same proof works for the étale sheaves $\bZ/p^n\bZ$. The statement for $\bQ_p$ is now a formal consequence.
\end{proof}

\begin{remark}
    \begin{itemize}
        \item The fact that we work with these sheaves specifically is crucial. If we wanted this to hold the sheaf $\cO_X$, we would need further restrictions on the cover (such as an snc condition).
        \item  By \cite[Corollary 2.3]{Berthelot_Bloch_Esnault_On_Witt_vector_cohomology_for_singular_varieties}, the analogous statement holds for the sheaf $W\cO_{X, \bQ}$.
    \end{itemize}
\end{remark}

The next lemma allows to compare the $\bQ_p$-- and $\bF_p$--cohomology of $X$ with the singular cohomology of $\cD(X)$ with coefficients in $\bQ_p$ resp.\ $\bF_p$, provided the strata of $X$ have vanishing higher cohomology.

\begin{lemma}\label{lem:cohomology_dual_complex_and_coherent}
Let $X = \bigcup_{i \in I} X_i$ be a pure dimensional scheme satisfying $(\star)$, proper over a separably closed field $k$ of characteristic $p>0$. Let $L\in\{\bQ_p,\bF_p\}$, and assume that for every stratum $Z\subseteq X$ and every $r>0$ we have $H^r_{\et}(Z,L)=0$.  Then \[ H^r_{\et}(X,L) \cong H^r(\cD(X),L) \] for every $r \geq 0$, where the group on the right--hand side denotes singular cohomology with coefficients in $L$.
\end{lemma}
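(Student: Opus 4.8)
The plan is to combine the \v Cech-type resolution of \autoref{lem:exact_dual_complex} with the hypercohomology spectral sequence, and then read off the answer combinatorially from the definition of $\cD(X)$. Write $[n]$ for the (finite) index set of the irreducible components $X_i$, and for $\emptyset \neq J \subseteq [n]$ let $\iota_J \colon X_J \hookrightarrow X$ be the inclusion. By \autoref{lem:exact_dual_complex} applied to the closed cover $X = \bigcup_i X_i$, the complex \autoref{ex_seq_et_sheaves} (for the constant sheaf $L$) is exact: setting $\cC^s \coloneqq \bigoplus_{|J| = s+1} \iota_{J,*} L_{X_J}$ for $s \geq 0$, the sequence $0 \to L_X \to \cC^0 \to \cC^1 \to \cdots$ is a resolution of $L_X$ on the \'etale site of $X$. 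Since $X$ satisfies $(\star)$, each $X_J$ is the disjoint union of its irreducible components, which are precisely the strata $Z$ with $J(Z) = J$; hence $\cC^s \cong \bigoplus_{Z} \iota_{Z,*} L_Z$, where $Z$ runs over the codimension-$s$ strata of $X$, and each $\iota_{Z,*}$ is exact because $\iota_Z$ is a closed immersion.

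I would then compute $\bH^{\bullet}(X, \cC^{\bullet})$ in two ways. On the one hand, as $\cC^{\bullet}$ is a resolution of $L_X$, we have $\bH^n(X, \cC^{\bullet}) \cong H^n_{\et}(X, L)$ for all $n$. On the other hand, using finiteness of the direct sum and $H^t_{\et}(X, \iota_{Z,*}L_Z) = H^t_{\et}(Z, L)$, the spectral sequence with $E_1^{s,t} = H^t_{\et}(X, \cC^s) \Longrightarrow \bH^{s+t}(X, \cC^{\bullet})$ takes the form
\[ E_1^{s,t} = \bigoplus_{Z \colon \codim Z = s} H^t_{\et}(Z, L). \]
The hypothesis on the strata kills everything off the row $t = 0$, and since each stratum $Z$ is irreducible, hence connected, $E_1^{s,0} = \bigoplus_{\codim Z = s} H^0_{\et}(Z, L) = \bigoplus_{\codim Z = s} L$. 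Thus the spectral sequence degenerates at $E_2$, giving $H^n_{\et}(X, L) \cong E_2^{n,0} = H^n\bigl(E_1^{\bullet, 0}, d_1\bigr)$.

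It then remains to identify $(E_1^{\bullet, 0}, d_1)$ with the simplicial cochain complex $C^{\bullet}(\cD(X), L)$ computing the singular cohomology $H^{\bullet}(\cD(X), L)$. By \autoref{def:dual_complex_pure_dim} the $s$-dimensional simplices of $\cD(X)$ are indexed by the codimension-$s$ strata, so $E_1^{s,0} = \bigoplus_{\codim Z = s} L$ is canonically $C^s(\cD(X), L)$. The differential $d_1$ is induced on global sections by the signed restriction maps of \autoref{ex_seq_et_sheaves}: its component from the summand of a codimension-$s$ stratum $Z'$ to the summand of a codimension-$(s+1)$ stratum $Z$ is a fixed sign times the identity when $Z \subseteq Z'$ and $J(Z') = J(Z) \setminus \{j_i\}$, and zero otherwise. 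Unwinding the ordered-face identifications in \autoref{def:dual_complex_pure_dim}, this is exactly the transpose of the simplicial boundary operator of $\cD(X)$ --- i.e.\ the simplicial coboundary --- up to the usual choice of signs on each cochain group, which does not affect cohomology. Hence $H^n(E_1^{\bullet,0}, d_1) \cong H^n(\cD(X), L)$, and the lemma would follow.

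I expect the main obstacle to be precisely this last matching of conventions: one must check that the sign rule $(-1)^{i-1}$ built into \autoref{ex_seq_et_sheaves}, together with the face identifications of \autoref{def:dual_complex_pure_dim}, really does reproduce the standard simplicial cochain differential (or at least a complex isomorphic to it), so that $E_2^{\bullet,0}$ is genuinely the singular cohomology of $\cD(X)$ and not of some a priori different $\Delta$-complex. Everything else --- exactness of \autoref{ex_seq_et_sheaves} (already granted), the decomposition of each $X_J$ into strata coming from $(\star)$ via \autoref{lem:dlt_dual_complex} and \autoref{def:dual_complex_condition_star}, exactness of pushforward along closed immersions, and the degeneration of the spectral sequence --- is formal.
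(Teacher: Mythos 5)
Your proof is correct and follows essentially the same route as the paper: the paper likewise takes the exact complex of \autoref{lem:exact_dual_complex}, regrouped by strata via $(\star)$, observes that the vanishing hypothesis makes it an acyclic resolution of $L_X$, and identifies the resulting global-sections complex with the simplicial cochain complex of the $\Delta$--complex $\cD(X)$ (citing \cite[Lemma 3.63]{Kollar_Singularities_of_the_minimal_model_program} and Hatcher for the details you spell out, including the sign bookkeeping, which is indeed harmless since $(-1)^{i-1}$ only negates the standard coboundary).
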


\begin{proof}
    The complex
    \begin{align*}
        0\to L_{X}\to\bigoplus_{\substack{Z_0\text{ stratum}\\\codim Z_0=0}} L_{Z_0}\to\bigoplus_{\substack{Z_1\text{ stratum}\\\codim Z_1=1}} L_{Z_1}\to\cdots\to\bigoplus_{\substack{Z_r\text{ stratum}\\\codim Z_r=r}} L_{Z_r}\to\cdots,
    \end{align*}
    of \autoref{ex_seq_et_sheaves} is exact by \autoref{lem:exact_dual_complex} (recall that $X_J$ is the disjoint union of its irreducible components). We then apply the same proof as in \cite[Lemma 3.63]{Kollar_Singularities_of_the_minimal_model_program}, where the key is that the complex \[ \bigoplus_{\substack{Z_0\text{ stratum}\\\codim Z_0=0}} L_{Z_0}\to\bigoplus_{\substack{Z_1\text{ stratum}\\\codim Z_1=1}} L_{Z_1}\to\cdots\to\bigoplus_{\substack{Z_r\text{ stratum}\\\codim Z_r=r}} L_{Z_r}\to\cdots \] is both an acyclic resolution of $L_X$, and computes simplicial cohomology of the $\Delta$--complex $\cD(X)$ (see \cite[Chapter 2.1]{Hatcher_algebraic_topology}).
\end{proof}

Finally, we will need the following lemma later on, which gives a criterion for when a finite $\Delta$--complex $\cD$ is a real topological manifold of dimension $2$. To phrase it efficiently, we introduce a bit of notation: suppose that $\cD$ is of dimension $2$ (i.e.\ the maximal dimension of a simplex in $\cD$ is $2$). Then the underlying $\Delta$--set $\Sigma$ of $\cD$ can be encoded by a triple $\Sigma=(\Sigma_0,\Sigma_1,\Sigma_2)$, such that $\Sigma_0$ is the set of vertices, $\Sigma_1$ is the multiset of edges and $\Sigma_2$ is the multiset of $2$--simplices with sides in $\Sigma_1$. Note that as we are considering a $\Delta$--complex instead of a simplicial complex, there might be multiple edges between two vertices, or multiple $2$--simplices sharing the same triangle of edges (hence we work with multisets). Now to the criterion.

\begin{lemma}\label{lem:dual_complex_2manifold}
    Let $\cD$ be a finite $\Delta$--complex of dimension $2$. Denote by $\Sigma=(\Sigma_0,\Sigma_1,\Sigma_2)$ the corresponding $\Delta$--set. To any vertex $v\in \Sigma_0$, we associate a graph $G_v$ as follows: the vertex set $V(G_v)$ is given by the multiset of edges in $\Sigma_1$ containing $v$. For distinct vertices $e,f\in V(G_v)$, we draw an edge between $e$ and $f$ for every $2$--simplex $t\in \Sigma_2$ having $e$ and $f$ as sides (i.e.\ multiple such $2$--simplices give multiple such edges). 
    
    Suppose now that for every $v\in V$, $G_v$ is a cycle of length at least $2$. Then $\cD$ is a real topological manifold of dimension $2$.
\end{lemma}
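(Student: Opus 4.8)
The plan is to verify the definition of a topological $2$--manifold directly. Since $\cD$ is a finite $\Delta$--complex, its geometric realization $|\cD|$ is automatically compact, Hausdorff and second countable, so it suffices to exhibit, for every point $x\in|\cD|$, an open neighbourhood homeomorphic to $\bR^2$. I would stratify the argument according to the unique open simplex $\sigma$ of $\cD$ whose interior contains $x$, splitting into the cases $\dim\sigma = 2$, $\dim\sigma = 1$, $\dim\sigma = 0$.

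If $\dim\sigma = 2$ there is nothing to prove: a small metric ball around $x$ inside that $2$--simplex is already a copy of $\bR^2$. Suppose next that $\dim\sigma = 1$, write $e = \sigma$, and pick an endpoint $v$ of $e$. The key observation is that the $2$--simplices of $\cD$ having $e$ as a face are in bijection with the edges of $G_v$ incident to the vertex $e\in V(G_v)$: a $2$--simplex $t$ with face $e$ has exactly one further face through $v$, call it $f$, and $t$ contributes precisely the edge $\{e,f\}$ to $G_v$; conversely each edge of $G_v$ at $e$ arises this way. Here $f\neq e$, since an edge of $G_v$ from $e$ to itself would be a loop, excluded because $G_v$ is a cycle of length at least $2$. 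As $G_v$ is a cycle, its vertex $e$ has degree $2$, hence $e$ is a face of exactly two $2$--simplices $t_1,t_2$, neither of which contains $e$ as a repeated face. Therefore, for a small ball $B$ around $x$, the neighbourhood $|\cD|\cap B$ is obtained by gluing the two half--discs $t_1\cap B$ and $t_2\cap B$ along the diameter $e\cap B$, which is homeomorphic to an open disc, hence to $\bR^2$.

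Finally suppose $x = v$ is a vertex. The open star of $v$ in $|\cD|$ is the open cone on the link $\mathrm{lk}(v)$, a $1$--dimensional $\Delta$--complex whose vertices are the edges of $\cD$ through $v$ and whose edges are the $2$--simplices through $v$ (each such $2$--simplex contributing the edge joining its two faces through $v$). By construction $\mathrm{lk}(v)$ is exactly the graph $G_v$ of the statement. Since $G_v$ is a cycle of length at least $2$ it is a genuine topological circle, so $\mathrm{lk}(v)\cong S^1$, and hence the open star of $v$ is homeomorphic to the open cone on $S^1$, i.e.\ to the open unit disc, hence to $\bR^2$. This covers all points of $|\cD|$, so $\cD$ is a topological $2$--manifold.

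\textbf{Main obstacle.} The real work is the combinatorial bookkeeping at the edge and vertex strata in the generality of $\Delta$--complexes rather than simplicial complexes: one must check that the hypothesis ``$G_v$ is a cycle of length at least $2$'' genuinely rules out the degenerate configurations (a $2$--simplex two of whose faces are identified to a common edge through $v$, a ``free'' edge contained in no $2$--simplex, an isolated vertex), and that the graph $G_v$ of the statement really coincides with the honest link $\mathrm{lk}(v)$ under the incidence data of \autoref{def:dual_complex_pure_dim}. Once these identifications are made, the three local models — a ball, two half--discs glued along a diameter, and the cone on a circle — are all visibly copies of $\bR^2$.
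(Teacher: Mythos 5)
Your proposal is correct and follows essentially the same route as the paper: both verify local $2$--Euclidean--ness stratum by stratum, using that the vertex $e\in V(G_v)$ has degree $2$ in the cycle $G_v$ to get exactly two $2$--simplices along each edge, and identifying a neighbourhood of a vertex $v$ with the cone on the circle $G_v$ (the paper phrases this as being homeomorphic to an $n$--gon). Your extra bookkeeping ruling out degenerate $\Delta$--complex configurations is a harmless elaboration of the same argument.
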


\begin{proof}
    Clearly $\cD$ is Hausdorff and second countable, so we are left to prove that $\cD$ is locally $2$--Euclidean. Clearly $\cD$ is locally $2$--Euclidean around points in the interior of a triangle. Now for any edge $e$ of $\cD$, observe that $\cD$ is locally $2$--Euclidean around points in the interior of $e$ if and only if $e$ is contained in exactly two $2$--simplices. Let $v\in e$ be an endpoint of $e$. Notice that the $2$--simplices containing $e$ are precisely the edges of $G_v$ containing $e\in V(G_v)$. As $G_v$ is a cycle of length at least two, $e$ has degree $2$ in $G_v$, and hence is contained in precisely two $2$--simplices. Finally, for vertices $v\in V$, the fact that $G_v$ is a cycle of length $n\geq 2$ gives that locally around $v$, $\cD$ is homeomorphic to an $n$--gon. In particular, it is locally $2$--Euclidean around $v$.
\end{proof}

\section{The case of $\bF_p$--rational singularities}\label{sec:Fp-rat}

As stated in the introduction, the main theorem is pretty easy in the case of $\bF_p$--rational singularities. As it turns out, already $\bF_p$--pseudorational singularities (which do not need resolution of singularities to be defined) are enough for us. We point out that this section is independent to the rest of the paper.

Let us first recall what the \emph{Cartier operator} is. Let $k$ be an $F$--finite field, and fix an isomorphism $\cO_{\Spec k} \cong F^!\cO_{\Spec k}$ (note that if $k$ is perfect, there is a canonical choice since $F^! \cong F^*$ in this case). Now, for any $k$--variety $f \colon Z \to \Spec k$, we can apply $f^!$ to this fixed isomorphism and obtain an isomorphism $\omega_Z^{\bullet} \to F^!\omega_Z^{\bullet}$. By adjunction (since $F$ is finite, $F^!$ is simply the right adjoint of $F_*$), we obtain a morphism $F_*\omega_Z^{\bullet} \to \omega_Z^{\bullet}$. Taking smallest cohomology sheaves gives us the Cartier operator \[ \Tr_Z \colon F_*\omega_Z \to \omega_Z.\]

Now, recall that for any generically finite surjective morphism $f \colon Y \to X$ of $k$--varieties, we have an induced trace map $f_*\omega_Y \to \omega_X$. Indeed, we can apply Grothendieck duality to the natural map $f^* \colon \cO_X \to Rf_*\cO_Y$, to obtain $Rf_*\omega_Y^{\bullet} \to \omega_X^{\bullet}$. Taking cohomology sheaves in smallest degree gives the desired morphism $f_*\omega_X \to \omega_Y$.

Since $f$ commutes with the Frobenius, we obtain by abstract nonsense that the diagram 
\begin{equation*}
    \begin{tikzcd}
    f_*F^e_*\omega_Y = F^e_*f_*\omega_Y  \arrow[rr] \arrow[d, "f_*\Tr^e_Y"] &  & F^e_*\omega_X \arrow[d, "\Tr^e_X"] \\
f_*\omega_Y \arrow[rr]                                &  & \omega_X             
\end{tikzcd} \end{equation*} also commutes.

\begin{definition}
    Let $X$ be a normal variety. 
    \begin{itemize}
        \item We say that $X$ has \emph{weakly pseudorational singularities} if for any proper birational morphism $\pi \colon Y \to X$ with $Y$ normal, the inclusion $\pi_*\omega_Y \to \omega_X$ is an equality.
        \item We say that $X$ has \emph{$\bF_p$--pseudorational singularities} if for every proper birational morphism $\pi \colon Y \to X$ with $Y$ normal, there exists $e > 0$ such that the image of $\Tr^e_X$ lies in $\pi_*\omega_Y$.
    \end{itemize}
\end{definition}

\begin{remark}
    One may be confused, as for the appearance of the word weakly in ``weakly pseudorational singularities'' but not in ``$\bF_p$--pseudorational singularities'', while the definitions are in the same spirit. The reason is that usually, ``pseudorational singularities'' also imply Cohen--Macaulayness by definition. However, we do not need Cohen--Macaulayness for our purposes.
    
    More importantly, the main results of \cite{Baudin_Bernasconi_Kawakami_Frobenius_GR_fails} seem to show that the notion of $\bF_p$--pseudorational singularities (or $\bF_p$--rational) may be more natural than their version where we further impose some sort of Cohen--Macaulayness assumption.
\end{remark}

It is immediate that weakly pseudorational singularities are $\bF_p$--pseudorational. Here is a partial converse: 

\begin{lemma}\label{lem:F-pure_F_p-pseudorat_implies_pseudorat}
    An $F$--pure, $\bF_p$--pseudorational singularity is weakly pseudorational.
\end{lemma}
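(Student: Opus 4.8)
The plan is to reduce the statement to a single fact: $F$-purity makes the iterated Cartier operator $\Tr^e_X \colon F^e_*\omega_X \to \omega_X$ surjective. Once that is available, $\bF_p$-pseudorationality forces the equality $\pi_*\omega_Y = \omega_X$ with essentially no further work.

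\emph{Step 1: $F$-purity gives surjectivity of $\Tr^e_X$.} Since $X$ is normal and $F$-finite, Grothendieck duality for the finite morphism $F^e$ identifies $F^e_*\omega_X$ with $\sHom_{\cO_X}(F^e_*\cO_X, \omega_X)$; one checks this on the regular locus, whose complement has codimension $\geq 2$, and extends by reflexivity of both sides, being careful that $\omega_X$ is the lowest cohomology sheaf of $\omega_X^{\bullet}$ because $X$ need not be Cohen--Macaulay. Under this identification, $\Tr^e_X$ becomes (up to the fixed trivialization of $F^!\cO_{\Spec k}$) the evaluation-at-$1$ map $\psi \mapsto \psi(1)$. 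Now if $X$ is $F$-pure, then locally the $p^e$-power map $\cO_X \to F^e_*\cO_X$ splits via some $\phi$ with $\phi(1) = 1$; for a local section $s$ of $\omega_X$ the homomorphism $b \mapsto s\,\phi(b)$ lies in $\sHom_{\cO_X}(F^e_*\cO_X, \omega_X) = F^e_*\omega_X$ and evaluates to $s$ at $1$. Hence $\Tr^e_X$ is surjective as a map of sheaves, for every $e > 0$.

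\emph{Step 2: conclusion.} Fix a proper birational morphism $\pi \colon Y \to X$ with $Y$ normal. The Grothendieck trace $R\pi_*\omega_Y^{\bullet} \to \omega_X^{\bullet}$, taken in lowest cohomological degree, yields the natural map $\pi_*\omega_Y \to \omega_X$; it is generically an isomorphism and $\pi_*\omega_Y$ is torsion-free, so it is injective, and we regard $\pi_*\omega_Y$ as a subsheaf of $\omega_X$. By the definition of $\bF_p$-pseudorationality, there is some $e > 0$ with $\im(\Tr^e_X) \subseteq \pi_*\omega_Y$ inside $\omega_X$. By Step 1, $\im(\Tr^e_X) = \omega_X$, so $\omega_X \subseteq \pi_*\omega_Y \subseteq \omega_X$ and therefore $\pi_*\omega_Y = \omega_X$. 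Since $\pi$ was arbitrary, $X$ has weakly pseudorational singularities.

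The only delicate point is the duality bookkeeping in Step 1 — matching $\Tr^e_X$ with evaluation-at-$1$ on $\sHom_{\cO_X}(F^e_*\cO_X, \omega_X)$ in the possibly non-Cohen--Macaulay setting — but this is standard in the theory of $F$-singularities (reduce to the regular locus, use that $\omega_X$ and the relevant $\sHom$-sheaves are $S_2$); everything else is formal.
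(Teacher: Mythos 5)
Your proposal is correct and follows essentially the same route as the paper: both reduce the statement to the surjectivity of $\Tr^e_X$, which follows from $F$--purity by dualizing the local splitting of $\cO_X \to F^e_*\cO_X$. Your Step 1 merely unwinds the paper's one-line appeal to Grothendieck duality into the explicit evaluation-at-$1$ description, producing the same splitting $s \mapsto (b \mapsto s\,\phi(b))$ of the trace.
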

\begin{proof}
    Let $X$ be a normal variety and let $\pi \colon Y \to X$ be a proper birational map, where $Y$ is also normal. Then by definition, for all $e \gg 0$, the image of $\Tr^e_X \colon F^e_*\omega_X \to \omega_X$ lies in $\pi_*\omega_Y$. To conclude, it is then enough to show that $\Tr^e_X$ is surjective. Since by definition, the Frobenius map $\cO_X \to F^e_*\cO_X$ locally splits, we obtain from Grothendieck duality that also $F^e_*\omega_X \to \omega_X$ locally splits. In particular, it is surjective.
\end{proof}

\begin{lemma}\label{lem:F_p-rat_implies_F_p-pseudorat}
    $\bF_p$--rational singularities are $\bF_p$--pseudorational.
\end{lemma}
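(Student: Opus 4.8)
The plan is to reduce to the resolution $\pi\colon Y\to X$ witnessing $\bF_p$--rationality and show that already $\im(\Tr^e_X)\subseteq\pi_*\omega_Y$ for $e\gg0$; since for any proper birational $\pi'\colon Y'\to X$ with $Y'$ normal one has $\pi_*\omega_Y\subseteq\pi'_*\omega_{Y'}$ inside $\omega_X$, this suffices. For the latter inclusion, let $W$ be the normalization of the component of $Y\times_X Y'$ dominating $X$, with induced maps $a\colon W\to Y$ and $b\colon W\to Y'$. Since $Y$ is regular, all discrepancies of $a$ are $\geq 0$, so $\omega_W\supseteq a^*\omega_Y$ and hence $a_*\omega_W\supseteq\omega_Y$ by the projection formula; combined with the trace inclusion $a_*\omega_W\subseteq\omega_Y$ this gives $a_*\omega_W=\omega_Y$. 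On the other hand $b_*\omega_W\subseteq\omega_{Y'}$ via the trace, so $\pi_*\omega_Y=\pi_{W*}\omega_W=\pi'_*(b_*\omega_W)\subseteq\pi'_*\omega_{Y'}$.

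Next I would use the hypothesis to control the Frobenius on the coherent sheaves $M_i\coloneqq R^i\pi_*\cO_Y$ for $i\geq 1$, which, as $X$ is normal and hence regular in codimension one, are supported in codimension $\geq 2$. The identification $\bF_{p,X}\xrightarrow{\sim}R\pi_*\bF_{p,Y}$ feeds through the Riemann--Hilbert correspondence of Bhatt--Lurie (compatible with proper pushforward, with $\bF_{p,Y}$ corresponding to $\cO_Y^{1/p^\infty}$) to give $\cO_X^{1/p^\infty}\xrightarrow{\sim}R\pi_*\cO_Y^{1/p^\infty}$, i.e.\ $R^i\pi_*\cO_Y^{1/p^\infty}=(M_i)^{1/p^\infty}=0$ for $i\geq 1$. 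By coherence this means that the $e$--fold Frobenius $M_i\to F^e_*M_i$ is the zero map for $e\gg0$. (One can avoid Riemann--Hilbert here: applying $R\pi_*$ to the Artin--Schreier sequence $0\to\bF_{p,Y}\to\cO_Y\xrightarrow{x\mapsto x^p-x}\cO_Y\to0$ and using $R^i\pi_*\bF_p=0$ for $i>0$ together with surjectivity of Artin--Schreier on $X$ itself, one finds $F-1$ bijective on each $M_i$; and a coherent sheaf carrying a Frobenius for which $F-1$ is bijective is $F$--nilpotent, since its Frobenius--colimit is a perfect $\cO_X^{1/p^\infty}$--module recovered from its sheaf of $F$--fixed sections $\ker(F-1)=0$.)

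Finally I would run Grothendieck--Serre duality. Let $C'$ be the cone of $\cO_X\to R\pi_*\cO_Y$; it carries a Frobenius, with $\mathcal{H}^i(C')=M_i$ concentrated in degrees $1\leq i\leq n-1$, each $F$--nilpotent by the previous step. Applying $R\cHom(-,\omega_X^\bullet)$ identifies the cone of the trace $R\pi_*\omega_Y^\bullet\to\omega_X^\bullet$ with $R\cHom(C',\omega_X^\bullet)[1]$, the Frobenius dualizing to a Cartier operator. Taking cohomology sheaves near degree $-n$ and using that the trace $\pi_*\omega_Y\hookrightarrow\omega_X$ is injective, one gets a Cartier--equivariant injection of the cokernel $Q\coloneqq\omega_X/\pi_*\omega_Y$ into $\mathcal{H}^{-n+1}\big(R\cHom(C',\omega_X^\bullet)\big)$, the Cartier operator on $Q$ being precisely the one induced by $\Tr_X$ (note $\Tr_X$ preserves $\pi_*\omega_Y$ by the compatibility square recalled before the statement). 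The hyper--Ext spectral sequence exhibits $\mathcal{H}^{-n+1}\big(R\cHom(C',\omega_X^\bullet)\big)$ as a finite iterated extension of subquotients of the sheaves $\cExt^p_{\cO_X}(M_i,\omega_X^\bullet)$, and vanishing of the $e$--fold Frobenius on each $M_i$ forces vanishing of the $e$--fold Cartier operator on each such $\cExt$; hence the Cartier operator on $\mathcal{H}^{-n+1}\big(R\cHom(C',\omega_X^\bullet)\big)$, and thus $\Tr_X$ on $Q$, is nilpotent. Therefore $\Tr^e_X(F^e_*\omega_X)\subseteq\ker(\omega_X\to Q)=\pi_*\omega_Y$ for $e\gg0$, which is what we wanted.

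The main obstacle is this last step: making the passage from "$F$--nilpotent on $R^i\pi_*\cO_Y$" to "Cartier operator nilpotent on $\omega_X/\pi_*\omega_Y$" rigorous requires tracking Frobenius/Cartier equivariance through Grothendieck duality and the hyper--Ext spectral sequence. A cleaner organization is to phrase everything in the language of Cartier modules (the Blickle--Böckle / Bhatt--Lurie dictionary between Cartier modules up to nilpotence and constructible $\bF_p$--sheaves), where $\bF_p$--rationality of $\pi$ reads as the vanishing in the derived category of Cartier crystals of $R\cHom(C',\omega_X^\bullet)$, and one only needs the formal fact that this forces nilpotence of its cohomology sheaves and hence of their Cartier submodules.
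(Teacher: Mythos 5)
Your argument is correct. Note first that the paper itself gives no proof of this lemma: it simply cites \cite[Remark 3.5.(e)]{Baudin_Bernasconi_Kawakami_Frobenius_GR_fails}, so there is no in--paper argument to compare against; what you have written is essentially the standard Cartier--module/Riemann--Hilbert proof that the cited remark relies on. Your three steps all check out: (i) the reduction to the single resolution $\pi$ witnessing $\bF_p$--rationality is fine, since $Y$ regular gives $a_*\omega_W=\omega_Y$ by nonnegativity of discrepancies together with injectivity of the trace, whence $\pi_*\omega_Y\subseteq\pi'_*\omega_{Y'}$ for every other normal proper birational model; (ii) $R^i\pi_*\bF_p=0$ does give, via compatibility of the Bhatt--Lurie correspondence with proper pushforward (or via Artin--Schreier), that $\varinjlim_e F^e_*R^i\pi_*\cO_Y=0$, and coherence upgrades this to $F^e=0$ on each $R^i\pi_*\cO_Y$ for a single $e$; (iii) Grothendieck duality applied to the cone of $\cO_X\to R\pi_*\cO_Y$ identifies $Q=\omega_X/\pi_*\omega_Y$ with a Cartier submodule of $\mathcal H^{-n+1}\bigl(R\cHom(C',\omega_X^\bullet)\bigr)$, and since $\cExt^j(F^e_*M_i,\omega_X^\bullet)\cong F^e_*\cExt^j(M_i,\omega_X^\bullet)$ (using $F^{e!}\omega_X^\bullet\cong\omega_X^\bullet$), vanishing of $F^e$ on the $M_i$ dualizes to vanishing of the $e$--fold Cartier operator on the $E_2$--terms, hence nilpotence on the abutment and on $Q$, which is exactly $\Tr^e_X(F^e_*\omega_X)\subseteq\pi_*\omega_Y$. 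The only point genuinely requiring care is the one you flag yourself, namely that the Cartier structure on $Q$ induced by duality agrees with the one coming from $\Tr_X$ (so that nilpotence of the former yields the containment for the latter); this follows from the functoriality of the trace and the commuting square recalled at the start of Section 3 of the paper, and is handled cleanly, as you suggest, in the language of Cartier crystals.
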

\begin{proof}
    See \cite[Remark 3.5.(e)]{Baudin_Bernasconi_Kawakami_Frobenius_GR_fails}.
\end{proof}

The following is classical (see e.g. \cite[Corollary 5.24]{Kollar_Mori_Birational_geometry_of_algebraic_varieties} or \cite[Proposition 12.5]{Krah_Vial_Proper_spliters_in_positive_characteristic} in great generality), but we recall the argument for convenience of the reader.
\begin{lemma}\label{main_thm:classical_statement}
    Let $X$ be a quasi--Gorenstein normal variety with weakly pseudorational singularities. Then $X$ has canonical singularities.
\end{lemma}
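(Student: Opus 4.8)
The plan is to reduce to the classical projection-formula computation, which becomes available precisely because quasi-Gorensteinness makes the pullback $\pi^*K_X$ an honest Cartier divisor.

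Fix a prime divisor $E$ over $X$; I must show that its discrepancy $a(E,X)$ is $\geq 0$. First I would choose a proper birational morphism $\pi\colon Y\to X$ with $Y$ normal on which $E$ occurs, let $E=E_1,\dots,E_r$ be the prime exceptional divisors of $\pi$, and pick a canonical divisor $K_Y$ with $\pi_*K_Y=K_X$. Since $K_X$ is Cartier, $\pi^*K_X$ is a well-defined Cartier divisor, and we may write $K_Y-\pi^*K_X=\sum_i a_iE_i$ with $a_i=a(E_i,X)$ and $a_1=a(E,X)$.

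Next I would apply the projection formula. Because $\cO_X(-K_X)$ is a line bundle (so $\cO_Y(-\pi^*K_X)$ is too, and tensoring by it is exact), we get
\[
 \pi_*\cO_Y\Big(\textstyle\sum_i a_iE_i\Big)\;=\;\pi_*\!\big(\omega_Y\otimes\cO_Y(-\pi^*K_X)\big)\;\cong\;(\pi_*\omega_Y)\otimes\cO_X(-K_X).
\]
Viewing all these sheaves inside the constant canonical sheaf of $K(X)$, and using that $\sum_i a_iE_i\leq\sum_i\max(a_i,0)E_i$ together with the fact that $\pi_*$ of $\cO_Y$ twisted by an effective exceptional divisor is $\cO_X$ (normality of $X$), one checks that this isomorphism identifies the natural trace inclusion $\pi_*\omega_Y\hookrightarrow\omega_X$ with the natural inclusion $\pi_*\cO_Y(\sum_i a_iE_i)\hookrightarrow\cO_X$. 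Now the weakly pseudorational hypothesis says exactly that $\pi_*\omega_Y=\omega_X$; twisting this equality of subsheaves by the line bundle $\cO_X(-K_X)$ gives $\pi_*\cO_Y(\sum_i a_iE_i)=\cO_X$ inside $K(X)$.

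Finally, taking global sections, the constant function $1\in\cO_X(X)$ becomes a section of $\cO_Y(\sum_i a_iE_i)$ over $Y$, i.e.\ $\opdiv_Y(1)+\sum_i a_iE_i=\sum_i a_iE_i\geq 0$, so $a_i\geq 0$ for every $i$; in particular $a(E,X)=a_1\geq 0$. As $E$ was an arbitrary divisor over $X$, this shows $X$ is canonical. I do not expect a real obstacle here: the only slightly delicate point is verifying that the projection-formula isomorphism coincides with the inclusion of subsheaves of $K(X)$ (equivalently, that the trace map $\pi_*\omega_Y\to\omega_X$ restricts, over the locus where $\pi$ is an isomorphism, to the identity), and everything else is formal bookkeeping.
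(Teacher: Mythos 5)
Your proof is correct and is essentially the paper's argument: both rest on the projection-formula identity $\pi_*\omega_Y=\omega_X\otimes\pi_*\cO_Y\bigl(\sum_i a_iE_i\bigr)$ (available because $K_X$ is Cartier) together with the observation that $1$ is a section of $\cO_Y\bigl(\sum_i a_iE_i\bigr)$ if and only if $\sum_i a_iE_i\geq 0$. The paper merely phrases this as a contradiction (if some $a_i<0$ then $\pi_*\cO_Y(\sum_i a_iE_i)\subsetneq\cO_X$), whereas you argue directly; the content is the same.
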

\begin{proof}
    Let $\pi \colon Y \to X$ be a proper birational map, with $Y$ a normal variety. Write \[ K_Y \sim \pi^*K_X + E_+ - E_-, \] where both $E_+$ and $E_-$ are effective. Assume by contradiction that $E_- \neq 0$. We then claim that $\pi_*\cO_Y(E_+ - E_-)$ is a strict ideal of $\cO_X$. Assuming the claim, we obtain that \[ \omega_X = \pi_*\omega_Y = \omega_X \otimes \pi_*\cO_Y(E_+ - E_-) \subsetneq \omega_X, \] hence giving a contradiction.

    Let us show the claim now. First, note that we have inclusions $\cO_X = \pi_*\cO_Y \subseteq \pi_*\cO_Y(E^+) \subseteq \left(\pi_*(\cO_Y(E^+)\right)^{\vee\vee} = \cO_X$, so we must have $\pi_*\cO_Y(E^+) = \cO_X$. On the other hand, the inclusion $\pi_*\cO_Y(E^+ - E^-) \subseteq \pi_*\cO_Y(E^+)$ must be strict, since the section $1$ lives in $\pi_*\cO_Y(E^+)$ but not in $\pi_*\cO_Y(E^+ - E^-)$. Hence, the claim is proven, and the proof is finished.
\end{proof}

The following is the main statement of this section.

\begin{proposition}[\autoref{prop_intro}]\label{main_thm_for_F_p-rational}
    Let $X$ be an $F$--pure, quasi--Gorenstein, normal variety with $\bF_p$--pseudorational singularities. Then $X$ has canonical singularities.

    In particular, this also holds if we replace ``$\bF_p$--pseudorational'' by ``$\bF_p$--rational''.
\end{proposition}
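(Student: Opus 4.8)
The statement is essentially a formal consequence of the lemmas assembled earlier in this section, so the plan is simply to chain them together in the right order. First I would invoke \autoref{lem:F-pure_F_p-pseudorat_implies_pseudorat}: since $X$ is assumed $F$--pure and $\bF_p$--pseudorational, that lemma immediately gives that $X$ has weakly pseudorational singularities. At this point the quasi--Gorenstein and normality hypotheses take over, and \autoref{main_thm:classical_statement} applies verbatim to conclude that $X$ has canonical singularities. Nothing further is needed for the main assertion.

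For the ``in particular'' clause, the plan is to reduce to the case already handled: by \autoref{lem:F_p-rat_implies_F_p-pseudorat}, every $\bF_p$--rational singularity is $\bF_p$--pseudorational, so a normal, $F$--pure, quasi--Gorenstein variety with $\bF_p$--rational singularities satisfies all the hypotheses of the first part of the statement, and hence is canonical.

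There is no serious obstacle here; the only thing to be slightly careful about is that the three cited results are applied to the \emph{same} resolution-free notion (``for every proper birational $\pi\colon Y\to X$ with $Y$ normal''), which is exactly how \autoref{lem:F-pure_F_p-pseudorat_implies_pseudorat} and \autoref{main_thm:classical_statement} are phrased, so the quantifiers line up without any loss. Thus the proof is a two-line deduction, with the substantive content already contained in the preceding lemmas (in particular the surjectivity of $\Tr^e_X$ under $F$--purity, via Grothendieck duality, and the classical strict-ideal argument in \autoref{main_thm:classical_statement}).
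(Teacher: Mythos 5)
Your proposal is correct and coincides with the paper's own proof: both combine \autoref{lem:F-pure_F_p-pseudorat_implies_pseudorat} with \autoref{main_thm:classical_statement} for the main assertion, and reduce the ``in particular'' clause via \autoref{lem:F_p-rat_implies_F_p-pseudorat}. Nothing to add.
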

\begin{proof}
   To obtain the first statement, combine \autoref{main_thm:classical_statement} and \autoref{lem:F-pure_F_p-pseudorat_implies_pseudorat}. To conclude the part about $\bF_p$--rational singularities, use \autoref{lem:F_p-rat_implies_F_p-pseudorat}.
\end{proof}

\begin{remark}
    \begin{itemize}
        \item The $F$--pure hypothesis is crucial, as taking a cone over a supersingular elliptic curve shows (the vertex point is then a strictly log canonical, $\bF_p$--rational and Gorenstein singularity).
        \item Looking at the proof, one may think that we used less than $F$--pure, namely that the trace map $F_*\omega_X \to \omega_X$ is surjective. However, $\omega_X$ is a line bundle, so this surjectivity is in fact equivalent to $F$--purity.
    \end{itemize}
\end{remark}

As a corollary of \autoref{main_thm_for_F_p-rational}, we obtain an analogue of \autoref{Theorem_B} and \autoref{Theorem_C} in any dimension and characteristic, under the additional hypothesis that $X$ is Cohen--Macaulay (and therefore Gorenstein).

\begin{corollary}\label{cor:main_thm_under_F_p-CM}
    Let $X$ be an $F$--pure, Gorenstein, normal variety with $\bQ_p$--rational singularities over a perfect field. Then $X$ has canonical singularities.
\end{corollary}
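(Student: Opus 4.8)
The plan is to deduce the statement from \autoref{main_thm_for_F_p-rational}: it suffices to prove that a Cohen--Macaulay variety $X$ with $\bQ_p$--rational singularities actually has \emph{rational} singularities in the classical sense (equivalently, $\bF_p$--rational singularities, which in turn are $\bF_p$--pseudorational), since then $X$ is weakly pseudorational and \autoref{main_thm:classical_statement} --- itself also visible as an instance of \autoref{main_thm_for_F_p-rational} --- yields that $X$ is canonical. So fix a projective resolution $\pi\colon Y\to X$ with $R^i\pi_*\bQ_p=0$ for all $i>0$; such a resolution exists by the very definition of $\bQ_p$--rationality. First I would reduce to the case where $\pi$ is a log resolution: blowing up a smooth centre has fibres that are projective spaces, and $\PP^n$ is $F$--split with $H^{>0}(\PP^n,\cO)=0$, so $R^{>0}(\text{blow-up})_*\bQ_p=0$ by \autoref{lem:baby_Riemann_Hilbert}, \autoref{rem:auotmatic_hyp_wo_o_cohomology_ordinary} and proper base change (\autoref{lem:proper_base_change}); hence further blow-ups do not destroy $R^i\pi_*\bQ_p=0$. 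Thus we may assume $E\coloneqq\Exc(\pi)$ is a simple normal crossing divisor, so in particular $E$ is Cohen--Macaulay.

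The core of the argument is a Frobenius comparison between the $p$--adic étale cohomology of the resolution and its coherent cohomology. Using the Artin--Schreier exact sequence $0\to\bF_p\to\cO_Y\xrightarrow{F-1}\cO_Y\to 0$ and its Witt--vector counterpart $0\to\bQ_p\to W\cO_{Y,\bQ}\xrightarrow{F-1}W\cO_{Y,\bQ}\to 0$ of étale sheaves, the hypothesis $R^i\pi_*\bQ_p=0$ ($i>0$) is equivalent to the statement that the Frobenius $F$ on $R^i\pi_*W\cO_{Y,\bQ}$ has no nonzero fixed part for $i>0$, whereas classical rationality of $X$ is the vanishing $R^i\pi_*\cO_Y=0$ for $i>0$. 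These two sides are to be bridged by Grothendieck--Serre duality. Since $Y$ is smooth and $\pi$ is proper, Witt--vector Grauert--Riemenschneider vanishing gives $R^i\pi_*W\omega_{Y,\bQ}=0$ for $i>0$, and since $X$ is Gorenstein and Cohen--Macaulay the sheaf $W\cO_X$ is Cohen--Macaulay with invertible canonical module $W\omega_{X,\bQ}$; Ekedahl duality then identifies $R^i\pi_*W\cO_{Y,\bQ}$, for $i>0$, with an $\cExt$--sheaf $\cExt^i\!\big(\pi_*W\omega_{Y,\bQ},\,W\omega_{X,\bQ}\big)$ over $W\cO_{X,\bQ}$. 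The cokernel $\cQ$ of the (injective) trace $\pi_*W\omega_{Y,\bQ}\hookrightarrow W\omega_{X,\bQ}$ is supported in codimension $\geq 2$ because $X$ is normal and $\pi$ is an isomorphism in codimension one; if $X$ is \emph{not} canonical then $\cQ\neq 0$, and a local cohomology computation over the Cohen--Macaulay ring $W\cO_{X,\bQ}$ produces a nonzero $R^{c-1}\pi_*W\cO_{Y,\bQ}\cong\cExt^c(\cQ,W\omega_{X,\bQ})$ with $c=\codim\Supp\cQ\geq 2$.

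It then remains to contradict $\bQ_p$--rationality by exhibiting a nonzero $F$--fixed vector in $\cExt^c(\cQ,W\omega_{X,\bQ})$, and this is precisely where $F$--purity enters: $F$--purity of $X$ forces the Cartier/trace maps on $\omega_X$ (and on the Witt level) to be surjective, so the slope--zero part of $\cExt^c(\cQ,W\omega_{X,\bQ})$ is nonzero and, after base change to the algebraic closure, contains Frobenius--fixed vectors, exactly as in the ordinarity comparisons of \autoref{lem:baby_Riemann_Hilbert} and \autoref{lem:wo_o_cohomology_ordinary}. \textbf{The hard part} is this last step: making the slope--zero/ordinarity input effective at the level of the $\cExt$--sheaves on $X$ itself, rather than on a proper variety over a field, and being careful that, unlike its Witt counterpart, $R\pi_*\cO_Y$ does not satisfy ordinary Grauert--Riemenschneider vanishing, so the comparison must be routed entirely through the de Rham--Witt side. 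Once this is in place, $\cQ=0$, hence $\pi_*\omega_Y=\omega_X$; as $\pi$ is a log resolution of the Gorenstein variety $X$, all discrepancies over $X$ are then nonnegative, i.e.\ $X$ is canonical.
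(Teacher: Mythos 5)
Your overall skeleton is the paper's: reduce to \autoref{main_thm_for_F_p-rational} (equivalently, to \autoref{main_thm:classical_statement}) by showing that Cohen--Macaulayness plus $\bQ_p$--rationality yields some form of pseudorationality. But the way you set up the intermediate step contains a false claim and, by your own admission, a hole exactly where the content lies. First, classical rational singularities are \emph{not} equivalent to $\bF_p$--rational singularities: $\bF_p$--rationality is a topological condition (invariant under universal homeomorphisms, about the constant \'etale sheaf), while classical rationality concerns $R^i\pi_*\cO_Y$; the paper itself records that klt three- and fourfolds are $\bF_p$--rational in the relevant range yet need not be Cohen--Macaulay, let alone rational. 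Relatedly, your announced target ``CM $+$ $\bQ_p$--rational $\Rightarrow$ classically rational'' is far stronger than what is needed or what the available machinery delivers; the statement actually used in the paper is the much weaker ``CM $+$ $\bQ_p$--rational $\Rightarrow$ $\bF_p$--pseudorational'' (a statement about the image of the Cartier trace $\Tr^e_X$ landing in $\pi_*\omega_Y$, not about vanishing of $R^i\pi_*\cO_Y$), which is quoted from [Baudin, Prop.\ 5.2.3] of the Witt Grauert--Riemenschneider paper. (To be fair, what you ultimately extract at the end is only $\cQ=0$, i.e.\ $\pi_*\omega_Y=\omega_X$, so the ``classically rational'' framing is an overstatement rather than a fatal error.)

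The genuine gap is the step you flag as ``the hard part'': producing a nonzero Frobenius--fixed class in $\cExt^c(\cQ, W\omega_{X,\bQ})$ from $F$--purity, i.e.\ making the slope--zero analysis work at the level of $\cExt$--sheaves on $X$ rather than for proper varieties over a field. That is precisely the content of the external result the paper cites, and it is not supplied in your write-up, so the proof is incomplete as it stands. A structural suggestion: in the paper's route, $F$--purity plays no role in the duality/Witt-vector step at all --- ``CM $+$ $\bQ_p$--rational $\Rightarrow$ $\bF_p$--pseudorational'' holds without it. $F$--purity enters only afterwards, to make $\Tr^e_X\colon F^e_*\omega_X\to\omega_X$ surjective (because $\omega_X$ is a line bundle) and thereby upgrade $\bF_p$--pseudorational to weakly pseudorational via \autoref{lem:F-pure_F_p-pseudorat_implies_pseudorat}. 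Decoupling the two in this way would isolate the hard Witt-vector duality statement as a clean, $F$--purity--free lemma, which you could then either prove or cite.
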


\begin{proof}
    By \cite[Proposition 5.2.3]{Baudin_Grauert-Riemenschneider_vanishing_for_Witt_canonical_sheaves}, Cohen--Macaulayness and $\bQ_p$--rationality together imply that $X$ has $\bF_p$--pseudorational singularities. We then immediately conclude by \autoref{main_thm_for_F_p-rational}.
\end{proof}

\section{The case of surfaces}\label{sec:surfaces}

Here, we show the analogue of our main theorems in dimension two.

\begin{proposition}\label{main_thm_surfaces}
Let $X$ be a normal, quasi--Gorenstein, $F$--pure surface with $\bQ_p$--rational singularities. Then $X$ has canonical singularities.
\end{proposition}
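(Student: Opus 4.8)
The plan is to argue by contradiction, producing a nonzero class in $H^1_{\et}(E,\bQ_p)$ for $E$ the exceptional divisor of a dlt modification. \emph{Reductions.} Since canonicity is local on $X$, and the non--canonical locus of a normal surface is contained in its singular locus, which is finite, we may localize and assume that $x\in X$ is the unique non--canonical point and an isolated singularity. Base--changing to the separable (indeed algebraic) closure of the residue field — which affects neither $F$--purity, nor $\bQ_p$--rationality, nor canonicity — we may assume that $\kappa(x)=k$ is algebraically closed. A normal quasi--Gorenstein surface is Cohen--Macaulay, hence Gorenstein; a klt Gorenstein surface singularity is canonical; and an $F$--pure normal $\bQ$--Gorenstein variety is log canonical (Hara--Watanabe). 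Hence $X$ is \emph{strictly} log canonical at $x$.

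\emph{Dlt modification and $F$--adjunction.} Apply \autoref{existence_dlt_modifications} with $n=2$ to obtain a projective birational $\pi\colon Y\to X$ and a dlt pair $(Y,E=\sum_i E_i)$ with each $E_i$ a $\bQ$--Cartier prime divisor, $E=\Exc(\pi)$, and — since $K_X$ is Cartier — $K_Y+E\sim\pi^*K_X$. If $\pi$ were an isomorphism then $(X,0)$ would be dlt, forcing $X$ klt, hence canonical, a contradiction; so $E\neq\emptyset$. As every divisorial discrepancy over a smooth point of a surface is $\geq 1$, $\pi$ is an isomorphism over $X\setminus\{x\}$, so $E=\pi^{-1}(x)$ set--theoretically; in particular $E$ is connected since $\pi_*\cO_Y=\cO_X$. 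We are now in the hypotheses of \autoref{cor:full_exc_div_GFS_and_K_trivial}, so by \autoref{cor:full_exc_div_GFS_and_K_trivial}.\autoref{F_adj_total} the scheme $E^{\wn}$ is globally $F$--split with $\omega_{E^{\wn}}\cong\cO_{E^{\wn}}$; by \autoref{cor:strata_of_dlt_locus_is_S_2_up_to_univ_homeo} it is $S_2$, hence Cohen--Macaulay, being pure of dimension one; and it is connected (as $E^{\wn}\to E$ is a homeomorphism), reduced, and projective over $k$.

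\emph{Non--vanishing.} Since $E^{\wn}$ is connected, reduced and proper over the algebraically closed field $k$, we have $H^0(E^{\wn},\cO_{E^{\wn}})=k$; Serre duality on the Cohen--Macaulay projective curve $E^{\wn}$, together with $\omega_{E^{\wn}}\cong\cO_{E^{\wn}}$, then gives $\dim_k H^1(E^{\wn},\cO_{E^{\wn}})=\dim_k H^0(E^{\wn},\cO_{E^{\wn}})=1$. Global $F$--splitness makes the Frobenius act injectively, hence bijectively, on this finite--dimensional space, so \autoref{lem:wo_o_cohomology_ordinary} together with \autoref{rem:auotmatic_hyp_wo_o_cohomology_ordinary} (the Witt--cohomology surjectivity hypothesis being automatic for a curve) yields $\dim_{\bQ_p}H^1_{\et}(E^{\wn},\bQ_p)=1$, and therefore $H^1_{\et}(E,\bQ_p)\neq 0$ by \autoref{lem:topological_invariance_of_etale_cohomology}.

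\emph{Contradiction, and the main obstacle.} On the other hand, $(Y,E)$ dlt with $E\geq 0$ having $\bQ$--Cartier components forces $K_Y$ to be $\bQ$--Cartier and $(Y,0)$ to be klt, so $Y$ has $\bQ_p$--rational singularities (klt singularities of dimension $\leq 4$ do). Hence for a resolution $\rho\colon Z\to Y$ one has $R^i\rho_*\bQ_p=0$ for $i>0$, and by the Leray spectral sequence $\pi\circ\rho\colon Z\to X$ is a resolution with $R^i(\pi\rho)_*\bQ_p\cong R^i\pi_*\bQ_p$; thus $\bQ_p$--rationality of $X$ forces $R^i\pi_*\bQ_p=0$ for $i>0$, which by \autoref{lem:proper_base_change} and \autoref{warning:etale_cohomology} (using that $\kappa(x)=k$ is separably closed) means $H^i_{\et}(E,\bQ_p)=0$ for all $i>0$ — contradicting the previous paragraph. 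I expect the only genuine subtlety here to be the input \autoref{cor:full_exc_div_GFS_and_K_trivial}, namely transporting $F$--purity of $X$ to global $F$--splitness and triviality of the dualizing sheaf on the possibly non--normal curve $E$ through its weak normalization; granting that, the surface case is a short combination of Serre duality, \autoref{lem:wo_o_cohomology_ordinary}, and the bookkeeping that identifies $\bQ_p$--rationality of $X$ with the vanishing of $H^{>0}_{\et}(E,\bQ_p)$.
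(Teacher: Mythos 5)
Your proof is correct and the core mechanism is the same as the paper's: derive a contradiction between the vanishing of $H^1_{\et}(E,\bQ_p)$ forced by $\bQ_p$--rationality and the non--vanishing obtained from global $F$--splitness, $\omega_E\cong\cO_E$, Serre duality and \autoref{lem:wo_o_cohomology_ordinary}. The routing differs in two places. First, the paper takes the \emph{minimal resolution} rather than the dlt modification of \autoref{existence_dlt_modifications}; this makes $Y$ smooth, so $E$ is automatically a Gorenstein curve with $\omega_E\cong\cO_E$ by plain adjunction (no detour through $E^{\wn}$ and \autoref{cor:full_exc_div_GFS_and_K_trivial} is needed), and it removes your extra step of invoking $\bQ_p$--rationality of the klt surface $Y$ to pass from $R^i\pi_*\bQ_p$ to $R^i(\pi\rho)_*\bQ_p$. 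Second, you base--change $X$ itself to the algebraic closure of the residue field at the outset; this is the one step where you are leaning on nontrivial facts (that $F$--purity, normality and quasi--Gorensteinness are geometric properties, via \cite[Lemma 2.4]{Gongyo_Li_Patakfalvi_Schwede_Tanaka_Zong_On_rational_connectedness_of_globally_F_regular_threefolds} and \autoref{lem:normal_geom_F_pure_is_geom_normal}), whereas the paper sidesteps it entirely by never base--changing $X$: it only base--changes the proper curve $E$ over $k=H^0(E,\cO_E)$, for which global $F$--splitness is known to be preserved, and uses \autoref{lem:proper_base_change} to transport the vanishing of $R^1\pi_{E,*}\bQ_p$. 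Both of your deviations can be justified, so this is a matter of economy rather than a gap, but the minimal--resolution route is genuinely shorter in dimension two.
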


\begin{proof}
    Up to shrinking $X$, we may assume that this variety has only one singular point $x \in X$. Let $\pi \colon Y \to X$ denote a minimal resolution, with exceptional divisor $E$. Then we have \[ K_Y + E \sim \pi^*K_X, \] so given that $X$ is $F$--pure, we can lift the $F$--splitting section to $Y$ and obtain that the pair $(Y, E)$ is globally sharply $F$--split (see \cite[Lemma 3.3]{Gongyo_Takagi_Surfaces_of_gloally_F_regular_and_F_split_type}). As in the proof of \autoref{cor:full_exc_div_GFS_and_K_trivial} (i.e.\ use adjunction and $F$--adjunction), we deduce that $E$ is globally $F$--split and satisfies $\omega_E \cong \cO_E$. Let $k \coloneqq H^0(E, \cO_E)$. Then by \cite[Lemma 2.4]{Gongyo_Li_Patakfalvi_Schwede_Tanaka_Zong_On_rational_connectedness_of_globally_F_regular_threefolds}, $E_{\overline{k}}$ is also globally $F$--split and satisfies $K_{E_{\overline{k}}} \sim 0$. In particular, $H^1(E_{\overline{k}}, \cO_{E_{\overline{k}}}) \neq 0$ by Serre duality, and the Frobenius action on this cohomology group is bijective. By \autoref{lem:wo_o_cohomology_ordinary}, we deduce that 
    \begin{equation}\label{surf_non_van}
        H^1_{\et}(E_{\overline{k}}, \bQ_p) \neq 0.
    \end{equation}
    
    On the other hand, using the $\bQ_p$--rationality condition and \autoref{lem:proper_base_change}, we obtain that $R^1\pi_{E, *}\bQ_p = 0$, where $\pi_E \colon E \to \Spec k$ is the induced map. Base changing to $\overline{k}$ and again using \autoref{lem:proper_base_change}, we obtain that $R^1\pi_{E_{\overline{k}}, *}\bQ_p = 0$, so in other words $H^1_{\et}(E_{\overline{k}}, \bQ_p) = 0$ (see \autoref{warning:etale_cohomology}). This is a contradiction with \autoref{surf_non_van}, so the proof is complete.
\end{proof}



\section{The case of threefolds}

Let $X$ be a normal threefold satisfying the usual hypotheses, i.e.\ $X$ is quasi--Gorenstein, $F$--pure and $\bQ_p$--rational. Having established the surface case reduces the threefold case to excluding isolated strictly lc singularities. 

In the surface case, we took a minimal resolution in order to prove our result. From this point of view, it is then natural to consider the dlt modification $(Y,E)$ of $X$ (see \autoref{existence_dlt_modifications}). As in the surface case, it will be crucial for us to understand the $\bQ_p$--cohomology of $E$ to derive a contradiction (it turns out that for threefolds, we will also need to understand its $\bF_p$--cohomology). To do so, we have to both understand the cohomology of the components $E_i$, together with the global geometry of $\cD(E)$.

\subsection{Description of the individual components}\label{ss:log_calabi_yau_surfaces}

Let us try understand the geometry and cohomology of the individual components $E_i$. By \autoref{prop:dlt_structure}.\autoref{prop:dlt_structure_adjunction} and $F$--adjunction for $K_Y+E\sim\pi^*K_X$, we obtain that each $(E_i^{\nu},\Delta_i)$ (where $\Delta_i=(E -E_i)|_{E_i^{\nu}}$) is a globally sharply $F$--split dlt pair with $K_{E_i^{\nu}}+\Delta_i\sim 0$. In particular, $(E^{\nu}_i,\Delta_i)$ is a dlt log Calabi--Yau surface. The idea of this section is as follows: 
\begin{itemize}
    \item show that we can base change our dlt pairs above to the algebraic closure, and even take a minimal resolution without losing relevant information (\autoref{lem:dlt_and_F_pure_surf_is_geom_dlt}, \autoref{lem:dlt_blowup_gorenstein_comps_of_exc});
    \item classify smooth dlt log Calabi--Yau pairs over an algebraically closed field (\autoref{lem:classification_dlt_surfaces}).
\end{itemize}

\begin{lemma}\label{lem:dlt_and_F_pure_surf_is_geom_dlt}
    Let $(S, \Delta = \sum_i \Delta_i)$ be a proper surface pair over an $F$--finite field $k$, such that $K_S + \Delta$ is Cartier and each $\Delta_i$ has coefficient 1. Assume further that $(S, \Delta)$ is dlt and geometrically sharply $F$--pure. Then $(S, \Delta)$ is geometrically dlt.
\end{lemma}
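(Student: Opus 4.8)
The plan is to verify directly that $(S_{\overline{k}},\Delta_{\overline{k}})$ is a dlt pair. First I would extract from the hypothesis three facts. Since $(S_{\overline{k}},\Delta_{\overline{k}})$ is (sharply) $F$--pure, the scheme $S_{\overline{k}}$ is reduced and weakly normal; in particular $S$ is geometrically reduced. Moreover each $\Delta_i$ is geometrically reduced --- otherwise some component of $\Delta_{\overline{k}}$ would have coefficient $\geq 2$, contradicting that $(S_{\overline{k}},\Delta_{\overline{k}})$ is an ($F$--pure) pair. Finally, I record the standard fact (proved just as in \autoref{prop:dlt_structure}.\autoref{prop:dlt_structure_it4} in dimension two) that $S$ is regular along $\Delta$ and that $(S,\Delta)$ is snc at every point of $\Delta$.

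The first main step is geometric normality of $S$. The morphism $S_{\overline{k}}\to S$ is faithfully flat with Artinian --- hence Cohen--Macaulay --- fibres, so it preserves codimension and $S_{\overline{k}}$ is $S_2$. For $R_1$, let $\xi\in S_{\overline{k}}$ be a codimension--one point lying over $s\in S$; since $S$ is normal, $\cO_{S,s}$ is a DVR, in particular geometrically unibranch. Geometric unibranchness ascends along $S_{\overline{k}}\to S_{k^{\mathrm{sep}}}\to S$ (a purely inseparable base change --- hence a universal homeomorphism --- preceded by an ind--\'etale map), so $\cO_{S_{\overline{k}},\xi}$ is geometrically unibranch; it is also weakly normal, being $F$--pure. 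A reduced, weakly normal, geometrically unibranch one--dimensional local ring is regular (its weak normalization coincides with its normalization), so $\cO_{S_{\overline{k}},\xi}$ is regular. Hence $S_{\overline{k}}$ is $R_1$, and therefore normal.

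The second step controls the boundary and the singularities. By $F$--adjunction (\autoref{rem:F_adj}), applied near the snc locus of $(S,\Delta)$, each $\Delta_i$ (a regular curve, since $S$ is regular along $\Delta$) is geometrically $F$--pure; running the codimension--one argument above for $\Delta_i$ instead of $S$ shows $\Delta_i$ is geometrically regular, i.e.\ smooth over $k$. As $\Delta_i$ is then a smooth effective Cartier divisor in the regular scheme $S$ (along $\Delta$), it follows that $S$ is smooth over $k$ along $\Delta$, so $S_{\overline{k}}$ is regular along $\Delta_{\overline{k}}$ and $\Delta_{\overline{k}}$ is snc there (its components are smooth over $\overline{k}$ and meet transversally). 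On the other hand, away from $\Delta$ the pair $(S,0)$ is klt; since discrepancies are insensitive to extension of the ground field and $S_{\overline{k}}$ is normal, $(S_{\overline{k}},0)$ is klt away from $\Delta_{\overline{k}}$. Consequently $(S_{\overline{k}},\Delta_{\overline{k}})$ is log canonical, its boundary is snc along its support, and its log canonical centres are precisely the strata of $\Delta_{\overline{k}}$ --- all snc points. Hence $(S_{\overline{k}},\Delta_{\overline{k}})$ is dlt.

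The main obstacle is the geometric normality/regularity input: that $F$--purity prevents non--normal behaviour in codimension one (and non--regular behaviour along the boundary curves) from appearing after base change to $\overline{k}$. This is exactly where \emph{geometric} $F$--purity is indispensable --- plain $F$--purity, indeed plain regularity, of $S$ need not persist after base change --- and it is what the ``weakly normal $+$ geometrically unibranch $\Rightarrow$ regular'' mechanism is designed to handle.
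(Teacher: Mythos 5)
Your proof breaks down at the off--boundary step. The assertion that ``discrepancies are insensitive to extension of the ground field,'' from which you conclude that $(S_{\overline{k}},0)$ is klt away from $\Delta_{\overline{k}}$, is false over imperfect fields, and it is exactly the hard point of this lemma. While divisors over $S$ base--change to divisors over $S_{\overline{k}}$ with the same discrepancy (once $S_{\overline{k}}$ is known to be normal), klt--ness of $S_{\overline{k}}$ must be tested against \emph{all} divisorial valuations over $S_{\overline{k}}$, most of which do not descend to $k$. Concretely, a regular (hence canonical) surface over $k=\bF_p(t)$ such as $\{z^p=t+f(x,y)\}$ base--changes to the normal hypersurface $\{w^p=f(x,y)\}$, which for suitable $f$ is not even log canonical; so regularity, let alone klt--ness, does not survive inseparable base change. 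Even granting the sharp $F$--purity of $S_{\overline{k}}$ (which does give log canonicity), one cannot conclude klt: the cone over an ordinary elliptic curve is $F$--pure, Gorenstein and strictly lc. The actual proof closes this gap by observing that $S$ has rational, hence $\bQ_p$--rational, singularities off $\Delta$, that $\bQ_p$--rationality is a geometric notion, and then invoking \autoref{main_thm_surfaces} for the quasi--Gorenstein, $F$--pure, $\bQ_p$--rational surface $S_{\overline{k}}$ to get that it is canonical there. Your argument never uses $F$--purity (or any geometric rationality input) away from $\Delta$, which is a sign that something must be wrong.

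There is also a circularity in the boundary step: to deduce that $\Delta_{i,\overline{k}}$ is $F$--pure you invoke the $F$--adjunction of \autoref{rem:F_adj}, but that statement requires the ambient variety and the divisor to be smooth. Applying it over $k$ only yields $F$--purity of $\Delta_i$ over $k$, which says nothing about $\Delta_{i,\overline{k}}$; applying it over $\overline{k}$ presupposes regularity of $S_{\overline{k}}$ along $\Delta_{\overline{k}}$ and of $\Delta_{i,\overline{k}}$, which is what you are trying to prove. The paper sidesteps this: sharp $F$--purity of $(S_{\overline{k}},\Delta_{\overline{k}})$ gives log canonicity, hence each $\Delta_{i,\overline{k}}$ is at worst nodal by \cite[Theorem 2.31]{Kollar_Singularities_of_the_minimal_model_program}, hence $F$--pure, and then \autoref{lem:normal_geom_F_pure_is_geom_normal} (applied to the regular curve $\Delta_i$) yields smoothness. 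Your ``weakly normal $+$ geometrically unibranch $\Rightarrow$ regular'' mechanism for geometric normality of $S$ itself is a valid, if more elaborate, substitute for \autoref{lem:normal_geom_F_pure_is_geom_normal}, and your Cartier--divisor trick to upgrade smoothness of $\Delta_i$ to smoothness of $S$ along $\Delta$ is fine; but the two issues above, especially the first, leave the proof incomplete.
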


\begin{remark}
    Most likely, one can prove much more general statements than this one. We leave this for future work.
\end{remark}

\begin{proof}
    Up to base change, we may assume that $k$ is separably closed. Since $S$ is in particular geometrically reduced, we obtain that $H^0(S, \cO_S) = k$. Since $(S_{\overline{k}}, \Delta_{\overline{k}})$ is sharply $F$--pure, $S_{\overline{k}}$ is normal (see \autoref{lem:normal_geom_F_pure_is_geom_normal}) and the coefficients of $\Delta_{\overline{k}}$ must all be $1$. In particular, this shows that $H^0(\Delta_i, \cO_{\Delta_i}) = k$ too (if $H^0(\Delta_i, \cO_{\Delta_i})$ was a purely inseparable extension of $k$, then $\Delta_{i, \overline{k}}$ would have higher multiplicity as a divisor). 
    
    Note that for all $i$, $K_S + \Delta_i$ is Cartier. Indeed, it is Cartier outside of the other $\Delta_j$'s by assumption, and it is Cartier at intersection with other $\Delta_j$'s by the snc assumption. Since the pair $(S_{\overline{k}}, \Delta_{\overline{k}})$ is log canonical, we know by \cite[Theorem 2.31]{Kollar_Singularities_of_the_minimal_model_program} that each $\Delta_{i, \overline{k}}$ has at worst nodal singularities. In particular, each $\Delta_{i, \overline{k}}$ is $F$--pure, so they are smooth by \autoref{lem:normal_geom_F_pure_is_geom_normal}.
    
    Let $x \in S$, and let us show that $(S_{\overline{k}}, \Delta_{\overline{k}})$ is dlt at $x$ (recall that the map $S_{\overline{k}} \to S$ is a homeomorphism, hence the identification).
    
    \begin{itemize}
        \item If $x \notin \Delta$, then let us show that $S_{\overline{k}}$ is canonical at $x$. Since $S$ is rational at $x$ (\cite[Proposition 2.28]{Kollar_Singularities_of_the_minimal_model_program}), it is in particular $\bQ_p$--rational at $x$. Hence, so is $S_{\overline{k}}$ by \cite[Proposition 3.22]{Patakfalvi_Zdanowicz_Ordinary} (their proof shows that $\bQ_p$--rationality is a geometric property). Since $K_S$ is Cartier at $x$ by assumption, also $K_{S_{\overline{k}}}$ is Cartier at $x$. Thus, we deduce by \autoref{main_thm_surfaces} that $S_{\overline{k}}$ is in fact canonical at $x$. 
        
        \item If $x \in \Delta_i \cap \Delta_j$, we deduce by the dlt assumption ($(S, \Delta)$ is snc around $x$) and $F$--pure adjunction (see \autoref{rem:F_adj}) that locally around $x$, $(\Delta_i, \Delta_j|_{\Delta_i} = x)$ is sharply $F$--pure. We then deduce as we did at the beginning of the proof that $k(x) = k$. By the dlt assumption, $(S, \Delta)$ is snc around $x$, so we conclude that both $\Delta_i$ and $\Delta_j$ are Cartier at $x$, and that $(\Delta_i \cdot \Delta_j)_x = 1$ (we mean the intersection number at $x$). In particular, $\Delta_{i, \overline{k}}$ and $\Delta_{j, \overline{k}}$ are also Cartier at $x$, and $(\Delta_{i, \overline{k}} \cdot \Delta_{j, \overline{k}})_x = 1$. This forces $x$ to be a smooth point of $S_{\overline{k}}$, so $(S_{\overline{k}}, \Delta_{\overline{k}})$ is dlt at $x$.
        
        \item If $x \in \Delta_i$ and not in the other components of $\Delta$, then let us show that $(S_{\overline{k}}, \Delta_{\overline{k}})$ is plt at $x$. Since $(S, \Delta_i)$ is plt at $x$, we know that we have \[ (K_S + \Delta_i)|_{\Delta_i} \sim K_{\Delta_i} \] at $x$. Indeed, if there was a different, then $x$ would be an lc center of $(S, \Delta_i)$ by \cite{Kollar_Singularities_of_the_minimal_model_program}. The dlt assumption would then imply that both $K_S$ and $\Delta_i$ would be Cartier and $S$ would be smooth at $x$, contradicting the existence of a non--zero different.

        But then, by base changing we also have \[ (K_{S_{\overline{k}}} + \Delta_{i, \overline{k}})|_{\Delta_{i, \overline{k}}} \sim K_{\Delta_{i, \overline{k}}} \] at $x$. Since $\Delta_{i, \overline{k}}$ is smooth, we conclude the proof by plt inversion of adjunction (\cite[Theorem 4.9]{Kollar_Singularities_of_the_minimal_model_program}). \qedhere
    \end{itemize}
\end{proof}

\begin{remark}
    To conclude that the components $\Delta_{i, \overline{k}}$ are $F$--pure, we could have also used the more general version of $F$--adjunction \cite[Theorem B]{Polstra_Simpson_Tucker_On_F_pure_inversion_of_adjunction}.
\end{remark}

\begin{lemma}\label{lem:normal_geom_F_pure_is_geom_normal}
    Let $S$ be a normal and geometrically $F$--pure scheme of finite type over an $F$--finite field $k$. Then $S$ is geometrically normal.
\end{lemma}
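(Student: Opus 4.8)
The plan is to reduce to a single purely inseparable base change and then to run a local computation at codimension one points via Fedder's criterion. Since an $F$--pure ring is reduced, geometric $F$--purity of $S$ already forces $S_{\overline k}$ to be reduced, hence $S$ is geometrically reduced (in particular the function field of each component is separable over $k$); so the only possible obstruction to geometric normality is a failure of condition $(R_1)$ after a purely inseparable extension. Now I would invoke the standard reductions: normality is preserved under separable base field extensions and coincides with geometric normality over a perfect field; $k$ is $F$--finite, so $k^{1/p^\infty}=\bigcup_n k^{1/p^n}$ is an increasing union of finite $F$--finite extensions, each obtained by adjoining finitely many $p$--th roots; and a Noetherian filtered colimit of DVRs is again a DVR, which takes care of the passage $k^{1/p^n}\leadsto k^{1/p^\infty}$. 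These together reduce the statement to: \emph{if $X$ is normal and geometrically $F$--pure over an $F$--finite field $\ell$ and $a\in\ell\setminus\ell^p$, then $X_{\ell(a^{1/p})}$ is normal.}

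To prove this, set $Y\coloneqq X_{\ell(a^{1/p})}$. Then $Y\to X$ is finite and flat with Artinian fibres, so $Y$ is $(S_2)$ because $X$ is (being normal); $Y$ is reduced because $Y_{\overline\ell}=X_{\overline\ell}$ is; and $Y$ is $F$--pure, because $F$--purity descends along the faithfully flat map $\mathcal O_Y\to\mathcal O_{X_{\overline\ell}}$ (a one--line check from the definition of purity, using that purity descends along faithfully flat maps). Hence it remains to show that $Y$ is regular at its codimension one points. Such a point $\xi$ maps to a codimension one point $\eta$ of $X$; writing $A\coloneqq\mathcal O_{X,\eta}$, a DVR with uniformizer $\pi$ and residue field $\kappa$, we have $C\coloneqq\mathcal O_{Y,\xi}=A[z]/(z^p-a)$, which is $F$--pure (a localization of $\mathcal O_Y$) and hence reduced. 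Reducedness forces $a^{1/p}\notin\Frac(A)$ --- otherwise $z^p-a=(z-a^{1/p})^p$ --- so $z^p-a$ is irreducible over $\Frac(A)$ and $C$ is a one--dimensional local domain, finite over $A$. If $\bar a$ has no $p$--th root in $\kappa$, then $C/\pi C=\kappa[z]/(z^p-\bar a)$ is a field, so $\mathfrak m_C=\pi C$ is principal and $C$ is a DVR.

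If instead $\bar a^{1/p}\in\kappa$, pick $c\in A$ with $\bar c=\bar a^{1/p}$; then $a-c^p\in\pi A\setminus\{0\}$, say $a-c^p=\pi^m v$ with $v\in A^\times$ and $m\ge 1$, and the substitution $w=z-c$ identifies $C$ with $A[w]/(w^p-\pi^m v)$. If $m=1$ then $\pi\in(w)$, so $\mathfrak m_C=(w)$ is principal and $C$ is a DVR. If $m\ge 2$, then passing to completions gives $\widehat C\cong\kappa[[\pi,w]]/(w^p-\pi^m v)$, and by Fedder's criterion (applicable since $\kappa$ is $F$--finite) $\widehat C$ is $F$--pure if and only if $(w^p-\pi^m v)^{p-1}\notin(\pi^p,w^p)$; but in the expansion of $(w^p-\pi^m v)^{p-1}$ every monomial already lies in $(\pi^p,w^p)$ except $\pi^{m(p-1)}$, and this one lies there as well, since $m\ge 2$ forces $m(p-1)\ge p$. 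Thus $C$ would fail to be $F$--pure, a contradiction; hence $m=1$ and $C$ is a DVR. This establishes $(R_1)$ for $Y$, so $Y$ is normal, which proves the claim and hence the lemma.

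The essential, non--formal input is precisely this last Fedder computation, which converts $F$--purity of $A[z]/(z^p-a)$ into the statement that the ``ramification exponent'' $m$ equals $1$; everything else is the standard reduction to one inseparable step together with the elementary observation that geometric reducedness already excludes a non--reduced base change. The points that need a little care are: the behaviour of $(S_2)$ under flat morphisms with $(S_2)$ base and Artinian fibres; the faithfully flat descent of $F$--purity; and the fact that $F$--finiteness and excellence legitimize both Fedder's criterion and the comparison $C\leadsto\widehat C$.
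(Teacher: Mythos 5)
Your proof is correct, but it takes a genuinely different route from the one in the paper. You reduce to a single inseparable step $\ell\subset\ell(a^{1/p})$ and then verify Serre's criterion for $Y=X_{\ell(a^{1/p})}$: condition $(S_2)$ comes for free from flatness of $Y\to X$ with zero--dimensional fibres, and $(R_1)$ is extracted from $F$--purity at codimension one points via Fedder's criterion, which forces the ramification exponent $m$ in $\widehat{\mathcal O}_{Y,\xi}\cong\kappa[[\pi,w]]/(w^p-\pi^m v)$ to equal $1$. The paper instead argues globally and in one stroke: for $R$ normal, the normalization map $R_{\overline k}\to (R_{\overline k})^{\nu}$ is a finite universal homeomorphism (Tanaka's lemma), hence factors a power of Frobenius; composing with a Frobenius splitting of $R_{\overline k}$ then splits the inclusion $R_{\overline k}\subseteq (R_{\overline k})^{\nu}$, which is generically an isomorphism into a torsion--free module and therefore an equality. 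The paper's argument is shorter and avoids all local computation, at the cost of invoking Tanaka's lemma; yours is longer but essentially self--contained and more informative, since it localizes the obstruction to geometric normality at codimension one points and shows that only $F$--purity there is actually used. Two points to phrase carefully in a final write--up: the descent of $F$--purity along $\mathcal O_Y\to\mathcal O_{X_{\overline\ell}}$ is best justified via the factorization $R\to F_*R\to F_*S$ of the pure composite $R\to S\to F_*S$ (purity of a composite implies purity of the first factor); and the comparison $C\rightsquigarrow\widehat C$ uses that $C$ is $F$--finite, so that $F$--purity equals Frobenius splitting and passes to the completion. Finally, the detour through $k^{1/p^\infty}$ and colimits of DVRs can be avoided: geometric normality is equivalent to normality after the single finite extension $k^{1/p}$, which for $F$--finite $k$ is reached by finitely many one--root adjunctions.
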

\begin{proof}
    We can show the statement locally, so let $U = \Spec R$ be an affine open of $S$. By \cite[Lemma 2.2]{Tanaka_Behavior_of_canonical_divisors_under_purely_inseparable_base_changes}, the morphism $R_{\overline{k}} \to (R_{\overline{k}})^{\nu}$ is a universal homeomorphism. By \stacksproj{0CNF}, we then have a factorization
    \[ \begin{tikzcd}
		R_{\overline{k}} \arrow[r, "\nu"] \arrow[rr, "F^e"', bend right] & (R_{\overline{k}})^{\nu} \arrow[r] & R_{\overline{k}}
	\end{tikzcd} \] for some $e > 0$ big enough. Let $\psi \colon F^e_*R_{\overline{k}} \to R_{\overline{k}}$ be a splitting of the Frobenius. Then the composition
	\[ \begin{tikzcd}
		R_{\overline{k}} \arrow[r, "\inc"] & (R_{\overline{k}})^{\nu} \arrow[r, "(\cdot)^{p^e}"] & F^e_*R_{\overline{k}} \arrow[r, "\psi"] & R_{\overline{k}},
	\end{tikzcd} \] is the identity, so in other words the inclusion $R_{\overline{k}} \inc (R_{\overline{k}})^{\nu}$ splits as $R_{\overline{k}}$--modules. Since this inclusion is generically an isomorphism and $(R_{\overline{k}})^{\nu}$ is a torsion--free $R_{\overline{k}}$--module, we must have $R_{\overline{k}} = (R_{\overline{k}})^{\nu}$. In other words, $R_{\overline{k}}$ is normal.
\end{proof}

\begin{remark}
    In the following proposition, the components $E_i^{\nu}$ will be geometrically normal by \autoref{lem:normal_geom_F_pure_is_geom_normal}. Thus, the notation $E_{i, \overline{k}}^{\nu}$ will not lead to any confusion.
\end{remark}

\begin{definition}
    A pair $(T, D)$ is said to be smooth dlt log Calabi--Yau if $T$ is smooth, proper over an algebraically closed field, $D$ is a reduced divisor, the pair $(T, D)$ is dlt and satisfies $K_T + D \sim 0$.
\end{definition}

\begin{lemma}\label{lem:dlt_blowup_gorenstein_comps_of_exc}
 Let $\pi \colon (Y,E) \to X$ be the dlt modification of a quasi--Gorenstein, $F$--pure threefold singularity which is canonical in codimension two, such that the residue field of $X$ is separably closed and $F$--finite. Let $E=\bigcup_i E_i$ be the decomposition into irreducible components, let $\Delta_i=(E-E_i)|_{E_i^{\nu}}$ be the associated different divisor on $E_i^{\nu}$, and let $k \coloneqq H^0(E^{\wn}, \cO_{E^{\wn}})$. Then the following holds:
 
 \begin{enumerate}
     \item each $E_{i, \overline{k}}^{\nu}$ has canonical singularities, the minimal resolution $\pi_i\colon T_i\to E_{i, \overline{k}}^{\nu}$ is an isomorphism over $D_i\coloneqq\Delta_{i,\overline{k}}$, and $(T_i,D_i)$ is a smooth dlt log Calabi--Yau pair; 
     \item for any $L \in \{\bF_p, \bQ_p\}$ and $r 
     \geq 0$, the natural morphism \[ H^r_{\et}(E_i, L) \to H^r_{\et}(T_i, L) \] is an isomorphism.
 \end{enumerate} 
 \end{lemma}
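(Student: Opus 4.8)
The plan is to derive everything from the structural results on dlt pairs (\autoref{prop:dlt_structure}, \autoref{cor:full_exc_div_GFS_and_K_trivial}), the geometric-dlt criterion \autoref{lem:dlt_and_F_pure_surf_is_geom_dlt}, and the cohomological comparison results of the Preliminaries. Write $x\in X$ for the closed point and $\kappa$ for its residue field. Since $X$ is canonical in codimension two, $\pi$ is an isomorphism over $X\setminus\{x\}$, so $E=\Exc(\pi)$ — and hence each $E_i$ — is proper over $\kappa$. As $\kappa$ is separably closed, $\bar k$ is an algebraic closure of $\kappa$ and $\Spec\bar k\to\Spec\kappa$ is a universal homeomorphism; hence $Z_{\bar k}\to Z$ is a universal homeomorphism for every finite-type $\kappa$-scheme $Z$. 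I will use this repeatedly, together with \autoref{lem:topological_invariance_of_etale_cohomology}, to pass freely between a $\kappa$-scheme and its base change to $\bar k$ when computing $\bF_p$- or $\bQ_p$-cohomology.

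To prove part (1), I would start from the setup of \autoref{ss:log_calabi_yau_surfaces}: since $K_X$ is Cartier, so is $K_Y+E\sim\pi^*K_X$, so \autoref{cor:full_exc_div_GFS_and_K_trivial}.\autoref{F_adj_cpnts} gives that $(E_i^\nu,\Delta_i)$ is dlt and globally sharply $F$-split with $K_{E_i^\nu}+\Delta_i\sim 0$, and \autoref{prop:dlt_structure}.\autoref{prop:dlt_structure_it5} gives that $K_{E_i^\nu}+\Delta_i$ is Cartier and $\Delta_i$ reduced. In particular $E_i^\nu$ is globally $F$-split, hence geometrically $F$-pure (base change of global $F$-splitting, \cite[Lemma 2.4]{Gongyo_Li_Patakfalvi_Schwede_Tanaka_Zong_On_rational_connectedness_of_globally_F_regular_threefolds}), hence geometrically normal by \autoref{lem:normal_geom_F_pure_is_geom_normal}, so $E_{i,\bar k}^\nu=(E_i^\nu)_{\bar k}$ is both normal and the normalization of $E_{i,\bar k}$ (as in the remark preceding the lemma). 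The same base change shows $(E_i^\nu,\Delta_i)$ is geometrically sharply $F$-pure, so \autoref{lem:dlt_and_F_pure_surf_is_geom_dlt} yields that $(E_{i,\bar k}^\nu,\Delta_{i,\bar k})$ is a dlt log Calabi--Yau surface over $\bar k$ with $K+\Delta$ Cartier. Now I would run the discrepancy argument from the proof of \autoref{prop:dlt_structure}.\autoref{prop:dlt_structure_it4} over $\bar k$: a singular point of $E_{i,\bar k}^\nu$ on $\Delta_{i,\bar k}$ would force an exceptional divisor of discrepancy $\le 0$ for $E_{i,\bar k}^\nu$ — hence $<0$ for the pair, hence $=-1$ since $K+\Delta$ is Cartier — over a point of $\Delta_{i,\bar k}$, contradicting dlt. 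Thus $E_{i,\bar k}^\nu$ is regular along $\Delta_{i,\bar k}$, so $\Delta_{i,\bar k}$ is Cartier, so $K_{E_{i,\bar k}^\nu}$ is Cartier; away from $\Delta_{i,\bar k}$ the surface is klt with Cartier canonical divisor, hence canonical. So $E_{i,\bar k}^\nu$ has canonical (Du Val, in particular rational) singularities, all lying off $\Delta_{i,\bar k}$; the minimal resolution $\pi_i\colon T_i\to E_{i,\bar k}^\nu$ is therefore crepant and an isomorphism over $D_i=\Delta_{i,\bar k}$, so $K_{T_i}+D_i=\pi_i^*(K_{E_{i,\bar k}^\nu}+\Delta_{i,\bar k})\sim 0$ and $(T_i,D_i)$ is snc (a smooth surface with the snc boundary of a dlt pair), in particular a smooth dlt log Calabi--Yau pair.

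For part (2), I would factor the natural morphism as $T_i\xrightarrow{\pi_i}E_{i,\bar k}^\nu\to E_i^\nu\to E_i$, and check that each factor induces an isomorphism on $H^r_{\et}(-,L)$ for $L\in\{\bF_p,\bQ_p\}$ and all $r\ge 0$. For $E_i^\nu\to E_i$: by \autoref{cor:strata_of_dlt_locus_is_S_2_up_to_univ_homeo} applied to the dlt pair $(Y,E)$ and the prime divisor $E_i$, the weak normalization $E_i^{\wn}$ is normal, so $E_i^\nu=E_i^{\wn}\to E_i$ is a universal homeomorphism by \autoref{lem:weak_nor_univ_homeo}; now apply \autoref{lem:topological_invariance_of_etale_cohomology}. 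For $E_{i,\bar k}^\nu\to E_i^\nu$: it is a universal homeomorphism by the reduction in the first paragraph, so \autoref{lem:topological_invariance_of_etale_cohomology} applies again. For $\pi_i$: since $E_{i,\bar k}^\nu$ has rational singularities (Part (1)), one has $\pi_{i,*}\cO_{T_i}=\cO_{E_{i,\bar k}^\nu}$ and $R^j\pi_{i,*}\cO_{T_i}=0$ for $j>0$, so $\pi_i$ induces isomorphisms $H^r(E_{i,\bar k}^\nu,\cO)\xrightarrow{\sim}H^r(T_i,\cO)$ for all $r$, and \autoref{we_suck_at_writing} then upgrades this to isomorphisms on $\bF_p$- and $\bQ_p$-cohomology. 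Composing the three isomorphisms proves the claim.

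The main obstacle is the geometric half of part (1): one must combine the Cartier-ness of $K_{E_{i,\bar k}^\nu}+\Delta_{i,\bar k}$ with geometric dlt-ness (the real content of \autoref{lem:dlt_and_F_pure_surf_is_geom_dlt}) to pin $E_{i,\bar k}^\nu$ down as a Du Val surface that is \emph{regular along} the boundary and \emph{resolved isomorphically and crepantly over} it. This is exactly the property that renders the minimal resolution harmless both for the log Calabi--Yau structure and, in part (2), for the cohomology; once it is established, part (2) is a formal chain of the comparison lemmas, the only point of care being the field-theoretic reduction to $\bar k$ noted in the first paragraph.
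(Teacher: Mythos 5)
Your proposal is correct and follows essentially the same route as the paper: reduce to $\bar k$ via \autoref{lem:normal_geom_F_pure_is_geom_normal} and \autoref{lem:dlt_and_F_pure_surf_is_geom_dlt}, pin down $E_{i,\bar k}^{\nu}$ as canonical and regular along the boundary via the Cartier-discrepancy argument, and then chain universal homeomorphisms (\autoref{lem:topological_invariance_of_etale_cohomology}) with rationality of canonical surface singularities and \autoref{we_suck_at_writing} for part (2). The only cosmetic difference is that you rerun the discrepancy computation directly over $\bar k$, whereas the paper deduces canonicity of $E_{i,\bar k}^{\nu}$ by first noting $K_{E_i^{\nu}}$ is Cartier and base changing; both hinge on the same inputs.
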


\begin{proof}
    Since $k$ is a finite extension of a separably closed field, it is itself separably closed. By \autoref{cor:full_exc_div_GFS_and_K_trivial}, we know that $E^{\wn}$ is globally $F$--split, so it is geometrically globally $F$--split (over $k$) by \cite[Lemma 2.4]{Gongyo_Li_Patakfalvi_Schwede_Tanaka_Zong_On_rational_connectedness_of_globally_F_regular_threefolds}. In particular, it is geometrically reduced, so all $E_i^{\nu}$ are generically geometrically reduced too. This forces $H^0(E_i^{\nu}, \cO_{E_i^{\nu}}) = k$ (recall that $k$ is separably closed).

    By \autoref{cor:full_exc_div_GFS_and_K_trivial}.\autoref{F_adj_cpnts}, we also know that each $(E_i^{\nu}, \Delta_i)$ is a globally sharply $F$--split dlt pair with $K_{E_i^{\nu}} + \Delta_i \sim 0$ (in particular it is Cartier). By the same argument as in \cite[Corollary 2.8]{Gongyo_Li_Patakfalvi_Schwede_Tanaka_Zong_On_rational_connectedness_of_globally_F_regular_threefolds} in the $F$--split setup, we obtain that $(E_i^{\nu}, \Delta_i)$ is geometrically globally sharply $F$--split. We then deduce from \autoref{lem:dlt_and_F_pure_surf_is_geom_dlt} that each $(E_i^{\nu}, \Delta_i)$ is geometrically dlt. 

    Since $E_i^{\nu}$ has canonical singularities by \autoref{prop:dlt_structure}.\autoref{prop:dlt_structure_it5}, we know that $K_{E_i^{\nu}}$ is Cartier by \cite[Theorem 2.29]{Kollar_Singularities_of_the_minimal_model_program}, so $K_{E_{i, \overline{k}}^{\nu}}$ is also Cartier. In particular, each $E^{\nu}_{i, \overline{k}}$ has canonical singularities too.

    Now consider the minimal resolution $\pi_i\colon T_i \to E^{\nu}_{i, \overline{k}}$. Since $E^{\nu}_{i, \overline{k}}$ has canonical singularities, $\pi_i$ is crepant. Since $E^{\nu}_{i, \overline{k}}$ is regular along $D_i \coloneqq \Delta_{i, \overline{k}}$ by \autoref{prop:dlt_structure}.\autoref{prop:dlt_structure_it4}, $\pi_i$ is an isomorphism over $D_i$. The pair $(T_i,D_i)$ is then a smooth dlt log Calabi--Yau surface pair.

    Let us show the last part for $L = \bF_p$ (the case $L = \bQ_p$ will then follow formally). For any $r \geq 0$, we have \[ H^r_{\et}(E_i, \bF_p) \underset{\autoref{prooflem:dlt_blowup_gorenstein_comps_of_exc_1}}{\cong} H^r_{\et}(E_{i, \overline{k}}^{\nu}, \bF_p) \underset{\autoref{prooflem:dlt_blowup_gorenstein_comps_of_exc_2}}{\cong} H^r_{\et}(T_i, \bF_p).\]
    In the above equation, we used the following:

    \begin{enumerate}
        \item\label{prooflem:dlt_blowup_gorenstein_comps_of_exc_1} Since both $E_i^{\nu} \to E_i$ and $E_{i, \overline{k}}^{\nu} \to E_i^{\nu}$ are universal homeomorphisms (see \autoref{prop:dlt_structure}.\autoref{prop:dlt_struct_geometry} for the first morphism), the map $H^r_{\et}(E_i, \bF_p) \to H^r_{\et}(E_{i, \overline{k}}^{\nu}, \bF_p)$ is an isomorphism by \autoref{lem:topological_invariance_of_etale_cohomology};
        \item\label{prooflem:dlt_blowup_gorenstein_comps_of_exc_2} Since $E_{i, \overline{k}}^{\nu}$ has canonical singularities, we know by \cite[Proposition 2.28]{Kollar_Singularities_of_the_minimal_model_program} that it has rational singularities, so the natural map $H^r(E^{\nu}_{i, \overline{k}},\cO_{E^{\nu}_{i, \overline{k}}}) \to H^r(T_i, \cO_{T_i})$ is an isomorphism. By \autoref{we_suck_at_writing}, the same holds for $\bF_p$--cohomology.\qedhere
    \end{enumerate}
\end{proof}

We now come to the classification of smooth dlt log Calabi--Yau surface pairs over an algebraically closed field.

\begin{lemma}\label{lem:classification_dlt_surfaces}
Let $(T,D)$ be a smooth dlt log Calabi--Yau surface pair. Then one of the following conditions hold:
\begin{enumerate}
\item\label{lem:classification_dlt_surfaces_it_CY} $K_T \sim 0$ and $D = 0$;
\item\label{lem:classification_dlt_surfaces_it_rul} there is a projective birational morphism $T \to S$, where $S$ is ruled over an elliptic curve $C$, and $D$ is a disjoint sum of at most two elliptic curves. Furthermore:
\begin{enumerate}
    \item\label{lem:classification_dlt_surfaces_it_rul_red} if $D$ is reducible (say $D = D_{+} + D_{-}$) then both maps $D_{\pm} \to C$ are isomorphisms;
    \item\label{lem:classification_dlt_surfaces_it_rul_irr} if $D$ is irreducible, then the induced morphism $D \to C$ is a degree two isogeny. Furthermore, this can only happen in characteristic $2$;
    
\end{enumerate}
\item\label{lem:classification_dlt_surfaces_it_rat_ell} $T$ is rational and $D$ is an elliptic curve;
\item\label{lem:classification_dlt_surfaces_it_rat_cyc} $T$ is rational (i.e.\ birational to $\bP^2$) and $D$ is a single cycle of smooth rational curves.
\end{enumerate}
\end{lemma}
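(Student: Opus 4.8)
The plan is to classify $(T,D)$ by running a minimal model program on $T$ and reading off the boundary on the minimal model. First I would record that since $T$ is smooth and $D$ reduced, $(T,D)$ dlt just means $D$ is a nodal curve (smooth components, meeting transversally, at most two through a point). Adjunction on each component,
\[ \deg K_{D_j} = (K_T+D)\cdot D_j - (D-D_j)\cdot D_j = -(D-D_j)\cdot D_j \le 0, \]
forces $g(D_j)\in\{0,1\}$, with elliptic components isolated (connected components of $D$) and rational components meeting $D-D_j$ in exactly two points; hence $D$ is a disjoint union of elliptic curves and of cycles of smooth rational curves. The case $D=0$ is trivial: $K_T\sim -D=0$, which is \autoref{lem:classification_dlt_surfaces_it_CY}. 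So from now on assume $D\ne 0$.

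Next I would show $\kappa(T)=-\infty$: otherwise a relative minimal model $f\colon T\to T_0$ has $K_{T_0}$ nef, so $-K_T\cdot f^*A=-K_{T_0}\cdot A\le 0$ for $A$ ample on $T_0$; as $-K_T\sim D\ge 0$ and $f^*A$ is nef this forces $D$ to be $f$-exceptional and $K_{T_0}\equiv 0$, and then $D+(K_T-f^*K_{T_0})$ is an effective $f$-exceptional divisor that is numerically trivial, hence zero, so $D=0$ — contradiction. So by the classification of surfaces $T$ is rational or birationally ruled over a smooth curve $C$ with $g(C)=q(T)$. Now run a $K_T$-MMP $T\to\bar T$ to a minimal model. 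The key point is that every contracted $(-1)$-curve $E$ satisfies $D\cdot E=1$ (from $(K_T+D)\cdot E=0$): if $E\not\subset D$ it meets $D$ transversally at one smooth point (in particular it avoids the nodes of $D$, where the intersection would be $\ge 2$), and if $E\subset D$ then $(D-E)\cdot E=2$ and $E$ meets $D-E$ transversally at two points. In both cases the contraction stays smooth, and the push-forward boundary $\bar D$ (the unique divisor with $K_{\bar T}+\bar D\sim 0$) stays reduced, nodal, dlt and effective — and nonzero, since a minimal $\kappa=-\infty$ surface has $K\not\sim 0$. Thus it suffices to treat $\bar T\in\{\mathbb{P}^2\}\cup\{\mathbb{F}_e\}\cup\{\mathbb{P}^1\text{-bundles over }C,\ g(C)\ge 1\}$ with $\bar D\in|-K_{\bar T}|$ a nonzero reduced nodal curve of arithmetic genus $1$.

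Then I would go through the minimal cases. On $\mathbb{P}^2$, $\bar D$ is a nodal cubic — a smooth elliptic curve, or a cycle of rational curves (irreducible nodal cubic, conic $+$ transversal line, triangle of lines). On $\mathbb{F}_e$, the sequence $0\to\cO(K_{\bar T})\to\cO\to\cO_{\bar D}\to 0$ together with $h^0(\cO(K_{\bar T}))=h^1(\cO(K_{\bar T}))=0$ shows $\bar D$ is connected with $h^0(\cO_{\bar D})=1$; a reduced connected nodal curve of arithmetic genus $1$, subject to the earlier constraint on rational components (which rules out trees hanging off), is a smooth elliptic curve or a cycle of rational curves. In both cases $\bar T$, hence $T$, is rational, giving \autoref{lem:classification_dlt_surfaces_it_rat_ell} or \autoref{lem:classification_dlt_surfaces_it_rat_cyc}. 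On a $\mathbb{P}^1$-bundle over $C$ with $g(C)\ge 1$, since $\bar D\cdot f=2$ for a fiber $f$, adjunction applied to fiber components and to the components of the horizontal part forces that there are no fiber components, that $g(C)=1$, and that $\bar D$ is either two disjoint sections (each $\cong C$) — case \autoref{lem:classification_dlt_surfaces_it_rul_red} — or an irreducible smooth elliptic curve mapping to $C$ with degree $2$, necessarily an isogeny. Finally I would transfer back up along $\bar T\to\cdots\to T$: each step blows up a point of multiplicity $m$ on the current boundary and replaces the boundary by its strict transform plus $(m-1)$ times the exceptional curve; since every boundary above has only points of multiplicity $\le 2$ (smooth points and nodes), this preserves the type of $D$ — a smooth elliptic curve stays smooth elliptic, two disjoint elliptic curves stay so, and a node of a cycle is simply replaced by a longer cycle — so $(T,D)$ falls into \autoref{lem:classification_dlt_surfaces_it_CY}--\autoref{lem:classification_dlt_surfaces_it_rat_cyc}, with $T$ ruled over $C$ in the last case.

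The main obstacle is the closing assertion of case \autoref{lem:classification_dlt_surfaces_it_rul_irr}, that an irreducible $D$ occurs only in characteristic $2$. Here the argument I have in mind is: on $\bar T=\mathbb{P}_C(\cE)$ the curve $\bar D$ is cut out by a section $s$ of $\Sym^2\cE\otimes(\det\cE)^{-1}$, whose discriminant is an intrinsic global section of $(\det\cE)^{\otimes 2}\otimes(\det\cE)^{\otimes(-2)}=\cO_C$, hence a constant $\delta\in k$. If $\delta\ne 0$ then $\bar D\to C$ is everywhere unramified, and in a basis of $\cE_{K(C)}$ the generic fibre of $s$ is a binary quadratic form over $K(C)$ of discriminant $\delta\in k^*=(k^*)^2\subseteq (K(C)^*)^2$; in characteristic $\ne 2$ this form is then reducible over $K(C)$, so $\bar D$ splits into two sections, contradicting irreducibility; if $\delta=0$ then $\bar D$ is non-reduced. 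Only in characteristic $2$ does completing the square fail, allowing $\bar D$ to be irreducible with $\bar D\to C$ a (purely inseparable) degree-$2$ isogeny. I also expect the routine bookkeeping that the $K_T$-MMP preserves "reduced nodal dlt boundary" to need some care — in particular the fact that contracted $(-1)$-curves do not pass through the nodes of $D$, so that no tacnode (hence non-lc point) is created.
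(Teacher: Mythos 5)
Your proof is correct and, for the main classification, follows essentially the same route as the paper: contract $(-1)$--curves to a minimal model, observe that each contraction is log crepant (your computation $D\cdot E=1$, resp.\ $(D-E)\cdot E=2$, is the same as the paper's intersection argument showing $a=0$), classify the minimal anticanonical pairs on $\bP^2$, $\bF_e$ and $\bP^1$--bundles over a curve of positive genus, and transfer back up noting that log crepant blowups at smooth points preserve the configuration while blowups at nodes lengthen cycles. One small terminological slip: the pushed-forward boundary stays reduced, nodal and \emph{log canonical}, but not dlt in general (contracting one curve of a $2$--cycle produces a self--nodal irreducible boundary); this is harmless since you only use ``reduced nodal''. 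The genuine divergence is your proof of the characteristic--$2$ restriction in case \autoref{lem:classification_dlt_surfaces_it_rul_irr}. The paper (\autoref{lem:leaf_only_char_2}) pushes forward $0\to\omega_T\to\cO_T\to\cO_D\to 0$ along the ruling, uses that an étale double cover in characteristic $\neq 2$ gives $f_*\cO_D\cong\cO_C\oplus\cL$ with $\cL$ a \emph{nontrivial} $2$--torsion line bundle, and derives $\cL\cong\cO_C$ by taking determinants. You instead view $\bar D$ as the zero locus of a section of $\Sym^2\cE\otimes(\det\cE)^{-1}$ on the minimal model, note that its discriminant is a global section of $\cO_C$, hence a constant, and conclude that in characteristic $\neq 2$ a nonzero (hence square) discriminant splits the binary quadratic form over $K(C)$ into two sections while a zero discriminant makes $\bar D$ non--reduced. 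Both arguments are complete; yours is more algebraic and pinpoints the failure of ``completing the square'' as the source of the characteristic--$2$ phenomenon (a purely inseparable bisection), whereas the paper's is cohomological and also explains the example that follows it (the Verschiebung of an ordinary elliptic curve). Your discriminant argument does require working on the genuinely minimal model $\bP_C(\cE)$ and uses $k=\overline k$ so that $\delta\in(k^*)^2$, both of which are available in the paper's setting.
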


\begin{proof}
Taking a contraction of a sequence of $(-1)$--curves $\pi \colon T \to T_{\rm min}$ we obtain a smooth minimal surface $T_{\rm min}$. Let $D_{\rm min} \coloneqq \pi_*D$. Note that each contraction $\pi_n \colon (T_n, D_n) \to (T_{n + 1}, D_{n + 1})$ of the above sequence is log crepant (i.e.\ $K_{T_n} + D_n \sim \pi_n^*(K_{T_{n + 1}} + D_{n + 1})$). Indeed, we know by induction that $K_{T_n} + D_n \sim 0$, so if we write \[ K_{T_n} + D_n \sim \pi_n^*(K_{T_{n + 1}} + D_{n + 1}) + a\Exc(\pi_n) \] for some $a \in \bZ$, intersecting with the divisor $\Exc(\pi_n)$ yields $a = 0$.

Thus, \[ K_T + D \sim \pi^*(K_{T_{\rm min}} + D_{\rm min}),\] so in particular $K_{T_{\rm min}} + D_{\rm min} \sim 0$ and $(T_{\rm min},D_{\rm min})$ is log canonical. Log crepant blowups $\psi \colon (V',D_{V'}) \to (V,D_V)$ with $(V, D_V)$ a log canonical surface pair with $V$ smooth fall in three classes:
\begin{enumerate}[(a)]
\item a blowup in a point of intersection of components of $D_V$, in this case $D_{V'} = \psi^{-1}_*D +\Exc(\psi)$;
\item a blowup in a nodal self--intersection of a component of $D_V$, in this case $D_{V'} = \psi^{-1}_*D + \Exc(\psi)$;
\item a blowup in a smooth point of $D_V$; in this case $D_{V'} = \psi^{-1}_*D$.  
\end{enumerate}

In particular, if at any point in the proof it happens that $(T_{\rm min}, D_{\rm min})$ satisfies \autoref{lem:classification_dlt_surfaces_it_CY}, \autoref{lem:classification_dlt_surfaces_it_rul}, \autoref{lem:classification_dlt_surfaces_it_rat_ell} or \autoref{lem:classification_dlt_surfaces_it_rat_cyc}, we will be done. Let us now understand $(T_{\rm min}, D_{\rm min})$. There are three cases of $T_{\rm min}$ to consider: $K_{T_{\rm}}$ is nef, $T_{\rm min} \cong \bP^2$, or $T_{\rm min}$ is ruled over a curve. 

In the first case, then also $-D_{\rm min}$ is nef, so $D_{\rm min} = 0$. Hence, we are in case \autoref{lem:classification_dlt_surfaces_it_CY}. Second, let us assume that $T_{\rm min} \cong \bP^2$. The divisor $D_{\min}$ is therefore a section of $\cO_{\bP^2}(3)$ such that the respective pair is log canonical. It is a straightforward verification to see that the only possibilities for $D_{\rm min}$ are either a smooth elliptic curve, a nodal curve, three lines or a quadric and a line intersecting transversely. In all the cases except the nodal one, the pair $(T,D)$ falls into cases \autoref{lem:classification_dlt_surfaces_it_rat_ell} or \autoref{lem:classification_dlt_surfaces_it_rat_cyc}. If we are in the nodal case, note that since $(T, D)$ is dlt, the nodal point has to be blown up at some point (i.e.\ some log crepant blowup is of type (b)). Thus, after this blowup, we end up in case \autoref{lem:classification_dlt_surfaces_it_rat_cyc}. 

Let us treat the remaining case: $T_{\rm min}$ is a ruled surface over a curve $C$. Let $F$ denote a fiber, and let $D_{\rm min} = D^h_{\rm min} + D^v_{\rm min}$ be its decomposition into horizontal and vertical components. By adjunction on $F$, we have
\begin{equation}\label{classification_dlt_CY:intersection_horizontal_cpnts_and_fiber}
    D^h_{\rm min}.F = 2,
\end{equation}
so $D^h_{\rm min}$ consists of either two sections or a two--to--one multisection. Let $E$ be a component of $D_{\rm min}$, write $D_{\rm min} = D'_{\rm min} + E$. By \cite[Theorem 2.31]{Kollar_Singularities_of_the_minimal_model_program}, the singularities of $E$ are at worst nodal, so denote by $\delta$ the number of nodes of $E$. Then by adjunction,
\begin{equation}\label{classification_dlt_CY:adjunction_horizontal_components}
    0 = (K_{T_{\rm min}} + E + D'_{\min}).E = \deg K_{E^{\nu}} + 2\delta + D'_{\min}.E.
\end{equation}
In particular, $E^{\nu}$ is either $\bP^1$ or an elliptic curve. 

Assume that for some component $E$ of $D_{\min}$, $E^{\nu}$ is an elliptic curve. Then by \autoref{classification_dlt_CY:adjunction_horizontal_components}, $\delta = 0$ (i.e.\ $E$ is smooth) and $E$ is isolated in $D_{\min}$. Since $E$ has to be horizontal, we deduce that $D_{\min}^v = 0$. 

\begin{itemize}
    \item If $C$ is also an elliptic curve, then all components of $D_{\min} = D_{\min}^h$ must also be elliptic curves, so in fact $D_{\min}$ is a disjoint union of elliptic curves. We then deduce from \autoref{classification_dlt_CY:intersection_horizontal_cpnts_and_fiber} that we are in cases \autoref{lem:classification_dlt_surfaces_it_rul_red} or \autoref{lem:classification_dlt_surfaces_it_rul_irr}. It is then the content of \autoref{lem:leaf_only_char_2} that case \autoref{lem:classification_dlt_surfaces_it_rul_irr} can only happen in characteristic $2$.
    \item If $C = \bP^1$, then the morphism $E \to C$ cannot be bijective, i.e.\ $E.F \geq 2$. We then conclude by \autoref{classification_dlt_CY:intersection_horizontal_cpnts_and_fiber} that $D_{\min} = E$, so we are in case \autoref{lem:classification_dlt_surfaces_it_rat_ell}.
\end{itemize}

Finally, suppose that all horizontal components are rational. We will show that we are in case \autoref{lem:classification_dlt_surfaces_it_rat_cyc}. Let $E$ denote a horizontal component, and let us first assume that $E$ is singular. Let $x\in E$ be a singular point, that is, a node. Then as $E.F\geq\mult_x(E)\geq 2$, we have by  \autoref{classification_dlt_CY:intersection_horizontal_cpnts_and_fiber} that $D_{\rm min}^h=E$. If by contradiction we have $D_{\rm min}^v\neq 0$, then in \autoref{classification_dlt_CY:adjunction_horizontal_components} we have $D'_{\rm min}.E\geq 2$. As $\deg K_{E^\nu}=-2$, we obtain $\delta= 0$, which is impossible as $E$ is singular. Therefore, we obtain in fact $D_{\rm min}=E$. By \autoref{classification_dlt_CY:adjunction_horizontal_components} we then obtain that $\delta=1$, so $E$ has precisely one node. By the same argument as in the case where $T_{\rm min}\cong\bP^2$, this node has to be blown up at some point, so we are in case \autoref{lem:classification_dlt_surfaces_it_rat_cyc}.

Hence, we are left with the case where all the components of $D_{\rm min}$ are smooth and rational. Note that for any two distinct components $E_0, E_1$ and $x \in E_0 \cap E_1$, the multiplicity of the intersection of $E_0 \cup E_1$ at $x$ is $1$ ($E_0 \cup E_1$ is at worst nodal at $x$ by \cite[Theorem 2.31]{Kollar_Singularities_of_the_minimal_model_program}, so this follows from the definition of a nodal point, see \cite[1.41]{Kollar_Singularities_of_the_minimal_model_program}). Then, by \autoref{classification_dlt_CY:adjunction_horizontal_components} we obtain that $(D_{\rm min}-E).E=2$ for every component $E$ of $D_{\rm min}$. Therefore, as all the appearing intersection multiplicities are equal to $1$, the degree of every component $E$ in the dual graph of $D_{\rm min}$ is precisely equal to $2$, i.e.\ the dual graph is a disjoint union of cycles of length at least $2$. In order to conclude that we are indeed in case \autoref{lem:classification_dlt_surfaces_it_rat_cyc}, it remains to see that $D_{\rm min}$ is connected. By \autoref{classification_dlt_CY:intersection_horizontal_cpnts_and_fiber}, $D_{\rm min}$ cannot consist of fibers only, and furthermore, if $D_{\rm min}$ contains a fiber, then it is automatically connected. So we are reduced to the case where $D_{\rm min}=D_{\rm min}^h$. But in this case, \autoref{classification_dlt_CY:intersection_horizontal_cpnts_and_fiber} again implies that there can be at most two components, so $D_{\rm min}$ must again be connected. Hence we are indeed in case \autoref{lem:classification_dlt_surfaces_it_rat_cyc}.
\end{proof}

\begin{remark}\label{rem:algebro_geometric_Poincare}
    In particular, \autoref{lem:classification_dlt_surfaces} states that for a smooth dlt log Calabi--Yau surface pair $(T,D)$, the dual complex $\cD(D)$ is a finite quotient of a sphere $\bS^k$ with $k< \dim T$. This phenomenon is conjectured to hold for log Calabi--Yau pairs in any dimension: it is the algebro--geometric Poincaré conjecture (see \cite[Question 4]{Kollar_Xu_The_dual_complex_of_Calabi_Yau_pairs}). By \emph{loc.\ cit.}, the conjecture is known to hold up to dimension $4$, at least over $\bC$.
\end{remark}

\begin{lemma}\label{lem:leaf_only_char_2}
    Let $f \colon T\to C$ be a smooth projective surface ruled over an elliptic curve $C$, and let $D\subset T$ be an elliptic curve such that $\pi|_D$ is of degree $2$ and $K_T+D\sim 0$. Then the characteristic of the base field must be equal to $2$.
\end{lemma}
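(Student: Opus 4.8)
The plan is to prove the contrapositive: assuming $\characteristic k\neq 2$ (and, as in the setting of \autoref{lem:classification_dlt_surfaces}, that $k$ is algebraically closed), I will show that $|{-}K_T|$ contains no integral curve, which in particular forbids the existence of an elliptic curve $D$ with $K_T+D\sim 0$. Write $T=\bP_C(\cE)$ for a rank two bundle $\cE$ on $C$, with projection $\pi\colon T\to C$. Since $\omega_C\cong\cO_C$, the relative Euler sequence gives $-K_T\sim\cO_{\bP(\cE)}(2)\otimes\pi^*(\det\cE)^{-1}$, so by the projection formula
\[
H^0(T,-K_T)\;\cong\;H^0\!\big(C,\ \Sym^2\cE\otimes(\det\cE)^{-1}\big).
\]
The single place where the characteristic hypothesis enters is the $S_2$--equivariant splitting $\cE\otimes\cE\cong\Sym^2\cE\oplus\det\cE$, which is valid only when $2$ is invertible; combined with the isomorphism $\cE^{\vee}\cong\cE\otimes(\det\cE)^{-1}$ for rank two bundles, it yields $\cEnd(\cE)\cong\cO_C\oplus\big(\Sym^2\cE\otimes(\det\cE)^{-1}\big)$, hence
\[
h^0(T,-K_T)\;=\;\dim_k\End(\cE)-1.
\]

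I would then run a trichotomy according to the slope--(semi)stability of $\cE$. If $\cE$ is \emph{unstable}, it has a sub--line bundle of slope larger than that of $\cE$, which produces a section $\Sigma\cong C$ of $\pi$ with $\Sigma^2<0$. Adjunction on the elliptic curve $\Sigma$ gives $K_T\cdot\Sigma=-\Sigma^2$, so $(-K_T)\cdot\Sigma=\Sigma^2<0$; hence $\Sigma$ is a component of every effective divisor linearly equivalent to $-K_T$, and since $\Sigma\cdot f=1\neq 2=(-K_T)\cdot f$ for a ruling fibre $f$, no such divisor is integral. If $\cE$ is \emph{stable}, then $\cE$ is simple, so $\End(\cE)=k$ and the displayed formula gives $h^0(T,-K_T)=0$: there is no effective anticanonical divisor at all.

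Finally, suppose $\cE$ is \emph{strictly semistable}; then $\deg\cE$ is even, and after tensoring $\cE$ by a line bundle (which changes neither $T$ nor $|{-}K_T|$) we may assume there is an exact sequence $0\to\cO_C\to\cE\to Q\to 0$ with $\deg Q=0$. If $Q\not\cong\cO_C$, then $\Ext^1(Q,\cO_C)=H^1(C,Q^{-1})=0$, so $\cE\cong\cO_C\oplus Q$; here $\dim\End(\cE)=2$, so $h^0(T,-K_T)=1$, and the unique member of $|{-}K_T|$ is the sum of the two sections attached to the two summands, hence reducible. If $Q\cong\cO_C$ and the sequence splits, then $T\cong C\times\bP^1$ and every member of $|{-}K_T|$ is a union of two fibres of the second projection. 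If $Q\cong\cO_C$ and the sequence does not split, then $\cE\cong F_2$ (the Atiyah bundle), and $\dim\End(F_2)=2$ (it is the local ring $k[\varepsilon]/\varepsilon^2$), so again $h^0(T,-K_T)=1$; moreover $h^0(\cO_{\bP(F_2)}(1))=h^0(F_2)=1$, so, writing $s$ for the unique section of $\cO_{\bP(F_2)}(1)$ and $C_0=\Div(s)$, the section $s^2$ spans the one--dimensional space $H^0(\cO_{\bP(F_2)}(2))$; therefore the unique anticanonical divisor is $\Div(s^2)=2C_0$, which is non--reduced. In every sub-case $|{-}K_T|$ has no integral member.

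Since all three cases are incompatible with the hypotheses on $D$, it follows that $\characteristic k=2$. I expect the main obstacle to be the last sub-case of the trichotomy, $\cE\cong F_2$: one must pin down that the unique anticanonical divisor is the non--reduced $2C_0$ rather than an integral curve, and this uses the characteristic hypothesis a second time, through the computation $h^0(\Sym^2 F_2)=1$ together with the explicit description of the generator as $s^2$. The unstable and stable cases, as well as the decomposable sub-cases, are comparatively formal bookkeeping.
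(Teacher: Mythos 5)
Your proof is correct, but it takes a genuinely different route from the paper. The paper argues directly with the given curve $D$: from $K_T\sim -D$ it pushes the sequence $0\to\omega_T\to\cO_T\to\cO_D\to 0$ forward along $f$, uses that in characteristic $\neq 2$ the degree--two isogeny $f|_D\colon D\to C$ is étale so that $(f|_D)_*\cO_D\cong\cO_C\oplus\cL$ with $\cL$ a \emph{nontrivial} $2$--torsion line bundle, and then takes determinants of the resulting extension $0\to\cO_C\to\cO_C\oplus\cL\to\cO_C\to 0$ to force $\cL\cong\cO_C$, a contradiction. You instead never touch $D$ until the end: writing $T=\bP_C(\cE)$, you identify $H^0(T,-K_T)$ with $H^0(C,\Sym^2\cE\otimes(\det\cE)^{-1})$, use the characteristic--$\neq 2$ splitting $\cE\otimes\cE\cong\Sym^2\cE\oplus\det\cE$ to compute $h^0(T,-K_T)=\dim_k\End(\cE)-1$, and run the standard trichotomy on (semi)stability of $\cE$ to show $|{-}K_T|$ has no integral member at all. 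All the individual steps check out (the adjunction in the unstable case, simplicity of stable bundles on an elliptic curve, the splitting when $Q\not\cong\cO_C$ via $\Ext^1(Q,\cO_C)=H^1(C,Q^{-1})=0$, and $\dim\End(F_2)=2$ giving $h^0(\Sym^2 F_2)=1$ spanned by $s^2$). Your argument is longer and more case--heavy, but it proves the stronger statement that a $\bP^1$--bundle over an elliptic curve in characteristic $\neq 2$ carries no integral anticanonical curve whatsoever, and it isolates exactly where the characteristic enters; the paper's proof is a ten--line computation that exploits the specific hypothesis that $D\to C$ is a degree--two isogeny. Two small points of hygiene if you write this up: say explicitly that $T\cong\bP_C(\cE)$ because the Brauer group of a curve over an algebraically closed field vanishes, and in the unstable case spell out that $D\cdot\Sigma=(-K_T)\cdot\Sigma<0$ forces $\Sigma\subseteq D$, hence $D=\Sigma$ by integrality, contradicting $D\cdot F=2\neq 1=\Sigma\cdot F$.
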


\begin{proof}
    Suppose by contradiction that we are in characteristic $p\neq 2$. As $K_T\sim -D$, we have a short exact sequence
    \begin{equation}\label{eq:char2_ses}
        0\to\omega_T\to\cO_T\to\cO_D\to 0.
    \end{equation}
    Let us examine what happens if we push forward \autoref{eq:char2_ses} along $f \colon T\to C$.

    First of all, note that as the fibers of $f$ are projective lines, we have $f_*\omega_T=0$ as well as $R^1f_*\cO_T=0$. Also, since $f$ is a fibration, we have $f_*\cO_T=\cO_C$. We then deduce by relative Grothendieck duality that
    \begin{equation*}
        R^1f_*\omega_T\cong\cO_C.
    \end{equation*}
    
    Finally, as $f|_D$ is a degree $2$ map between elliptic curves in characteristic $p\neq 2$, it is automatically étale. Now by \cite[Exercise IV.2.7]{Hartshorne_Algebraic_geometry}, we must have
    \begin{equation*}
        (f|_D)_*\cO_D\cong\cO_C\oplus\cL,
    \end{equation*}
    where $\cL$ is a non-trivial $2$--torsion line bundle on $C$.

    Putting all of this together, we obtain that pushing forward \autoref{eq:char2_ses} along $f$, gives the exact sequence
    \begin{align*}
        0\to \cO_C\to \cO_C\oplus\cL\to \cO_C\to 0.
    \end{align*}
    Taking determinants gives us an isomorphism $\cL\cong\cO_C$, leading to a contradiction. Hence we must be in characteristic $2$.
\end{proof}

\begin{example}
    In characteristic $p = 2$, the behaviour described in \autoref{lem:leaf_only_char_2} can happen. Let us give an example.

    Let $C$ be an ordinary elliptic curve, and let $F_{C/k} \colon C \to C^{(p)}$ denote its $k$-linearized Frobenius. Its dual map $V  \colon C^{(p)} \to C$ is then the Verschiebung, and it is étale (of degree $p = 2$) since $C$ is ordinary (see e.g. \cite[Proposition 2.3.2]{Hacon_Pat_GV_Characterization_Ordinary_AV}) . By definition of dual maps, the group morphism $V^* \colon \Pic(C) \to \Pic(C^{(p)})$ corresponds to $F_{C/k}$, and hence is injective. Throughout, we will use this without further mention.

    Consider $T \coloneqq \bP_{C}(V_*\cO_{C^{(p)}})$ with its natural morphism $f \colon T \to C$. Let us show that the map $\cO_{C^{(p)}} = V^*\cO_C \to V^*V_*\cO_{C^{(p)}}$ splits (the proof will also show that $V^*V_*\cO_{C^{(p)}} \cong \cO_{C^{(p)}}^{\oplus 2}$). By Fourier--Mukai theory (see \cite{Mukai_Duality_between_DX_and_D_hat_X}), this is equivalent to showing that the natural map $k(0) \to F_{C/k, *}F_{C/k}^*k(0)$ splits, where $k(0)$ denotes the skyscraper sheaf of $k$ at $0 \in C^{(p)}$. Since $F_{C/k}^*k(0)$ is $\fm_{C, 0}^p$--torsion by definition of the Frobenius, we have that $F_{C/k, *}F_{C/k}^*k(0)$ is $\fm_{C^{(p)}, 0}$--torsion. It is then a $k$--vector space, so the map above automatically splits, and $F_{C/k, *}F_{C/k}^*k(0) \cong k(0)^{\oplus 2}$. We then have a base change diagram
    \[ \begin{tikzcd}
       C^{(p)}   \times \bP^1 \cong T' \arrow[d, "f'"'] \arrow[rr, "g"] &  & T \arrow[d, "f"] \\
        C^{(p)} \arrow[rr, "V"']                                  &  & C,    
    \end{tikzcd} \] 
    where $T' \coloneqq \bP_C(V^*V_*\cO_{C^{(p)}})$. Let $S_0$ (resp.\ $S_0'$) denote the section of $f$ (resp.\ $f'$) coming from the inclusion $\cO_C \hookrightarrow V_*\cO_{C^{(p)}}$ (resp.\ its pullback by $V$). Let also $S_1'$ be a section coming from a splitting of $\cO_{C^{(p)}} \hookrightarrow V^*V_*\cO_{C^{(p)}} \cong \cO_{C^{(p)}}^{\oplus 2}$, so that by construction, $S_1' \neq S_0'$ but $S_1' \sim S_0'$. Finally, set $D \coloneqq g(S_1')$; we will show that $D$ is an elliptic curve, that $D \sim -K_T$ and that $f|_D \colon D \to C$ has degree $2$ (and is étale).

    Assume by contradiction that $f|_D \colon D \to C$ has degree $1$. Since $\cO_C$ is integrally closed, then $f|_D$ is an isomorphism. Hence, $D$ corresponds to some sub-line bundle $L \hookrightarrow V_*\cO_{C^{(p)}}$. But then $S_1' \to D$ has degree two, so $g^{-1}(D) = S_1'$. In other words, $V^*L$ corresponds to the section $S_1'$, so in particular $V^*L \cong \cO_{C^{(p)}}$. We then obtain that $L \cong \cO_C$. Since (up to scaling) there is only one inclusion $\cO_C \hookrightarrow V_*\cO_{C^{(p)}}$, we conclude that $S_1' = S_0'$, contradicting our construction.

    Thus, $D \to C$ has degree $2$, so $S_1' \to D$ has degree $1$. Let $G \coloneqq \ker(C^{(p)} \to C) \cong \bZ/2\bZ$, so that $G$ acts on $T'$ and by construction, $g$ corresponds to the quotient by $G$. Let $\sigma$ be the involution generating $G$. Then $f^{-1}(D) = S_1' \cup \sigma(S_1')$, and since $S_1' \to D$ has degree $1$, we deduce that $S_1' \neq \sigma(S_1')$. Since $S_0'$ is fixed by $\sigma$, we have that \[ \sigma(S_1') \sim \sigma(S_0') = S_0', \]  so $S_1'.\sigma(S_1') = (S_0')^2 = 0$. Thus, $\sigma(S_1') \cap S_1' = \emptyset$, so $S_1' \to D$ is an isomorphism.

    Finally, let us show that $D \sim -K_T$. By our calculation above, $g^*D \sim 2S_0' \sim -K_{T'}$. Since $g$ is étale, $g^*K_{T} \sim K_{T'}$, so we obtain that $g^*(K_T + D) \sim 0$. Write $K_T + D \sim a.S_0 + b.f^*R$ for $a, b \in \bZ$ and a divisor $R$ on $C$ (see \cite[Proposition V.2.3]{Hartshorne_Algebraic_geometry}). Since $g^*S_0 \sim S_0'$, we deduce from $g^*(K_T + D) \sim 0$ that $a = 0$ and $V^*R \sim 0$, so again $R \sim 0$. In other words, $K_T + D \sim 0$ and the proof is complete. 
    
    Note that the pair $(T, D)$ is globally sharply $F$--split. Indeed, given that $H^0(T, \omega_T) = 0$ ($T$ is ruled), the natural morphism $H^0(T, \omega_T(D)) \to H^0(D, \omega_D)$ is injective (hence an isomorphism, since $H^0(T, \omega_T(D)) \neq 0$). By ordinarity of $D$, the Cartier operator acts bijectively on $H^0(D, \omega_D)$, and hence also on $H^0(T, \omega_T(D))$. In other words, the trace map $F_*\omega_T(D) \to \omega_T(D)$ is surjective on global sections. Given that $\omega_T(D) \cong \cO_T$, we therefore obtain the existence of a splitting, i.e. $(T, D)$ is globally sharply $F$--split.
\end{example}

\subsection{Global description of the dual complex}

Now that we have a precise description of $(E_{i,\overline{k}}^{\nu},\Delta_{i,\overline{k}})$, we essentially know the shape of the dual complex $\cD(E)$ around the vertices $E_i$. Using this local description, we now deduce a global description of $\cD(E)$, pointing out what cases of \autoref{lem:classification_dlt_surfaces} can fit together.

\begin{proposition}\label{prop:dlt_blowup_gorenstein_dual_complex}
 With the same setting and notations as in \autoref{lem:dlt_blowup_gorenstein_comps_of_exc}, the dual complex $\cD(E)$ is defined by \autoref{lem:dlt_dual_complex} and we have the following case distinction:
\begin{enumerate}
\item \label{prop:dlt_blowup_gorenstein_dual_complex_it_k3} the dual complex $\cD(E)$ is a point, and $E^{\nu}_{\overline{k}}$ is an irreducible, globally $F$--split surface satisfying $K_{E^{\nu}_{\overline{k}}} \sim 0$. The minimal resolution $(T,D)$ of $E^{\nu}_{\overline{k}}$ falls into case \autoref{lem:classification_dlt_surfaces_it_CY} of \autoref{lem:classification_dlt_surfaces};

\item \label{prop:dlt_blowup_gorenstein_dual_complex_it_ec} the dual complex $\cD(E)$ is one--dimensional, it is either a cycle or a path graph, and each $D_i$ is a disjoint union of at most two ordinary elliptic curves. Furthermore, we have the following:
\begin{enumerate}
    \item\label{prop:dlt_blowup_gorenstein_dual_complex_it_ec_deg2} if $E_i$ is a vertex of degree $2$ in $\cD(E)$ (i.e.\ $D_i\coloneqq\Delta_{i,\overline{k}}$ is reducible) then $(T_i,D_i)$ falls into case \autoref{lem:classification_dlt_surfaces_it_rul_red} of \autoref{lem:classification_dlt_surfaces};
    \item\label{prop:dlt_blowup_gorenstein_dual_complex_it_ec_deg1} if $E_i$ is a vertex of degree $1$ in $\cD(E)$ (i.e.\ $D_i$ is irreducible) then $(T_i,D_i)$ falls into case \autoref{lem:classification_dlt_surfaces_it_rul_irr} or \autoref{lem:classification_dlt_surfaces_it_rat_ell} of \autoref{lem:classification_dlt_surfaces}. In particular, if $p\neq 2$, then only case \autoref{lem:classification_dlt_surfaces_it_rat_ell} can happen;
\end{enumerate}

\item \label{prop:dlt_blowup_gorenstein_dual_complex_it_rc} the dual complex $\cD(E)$ is two--dimensional, it is a closed compact connected topological manifold, every $E_{i,\overline{k}}^{\nu}$ is a birational to $\bP^2$, and every pair $(T_i,D_i)$ falls into case \autoref{lem:classification_dlt_surfaces_it_rat_cyc} of \autoref{lem:classification_dlt_surfaces}.
\end{enumerate}
\end{proposition}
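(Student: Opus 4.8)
The plan is to recover the global combinatorics of $\cD(E)$ from local data at each vertex $[E_i]$, which by \autoref{lem:dlt_blowup_gorenstein_comps_of_exc} and \autoref{lem:classification_dlt_surfaces} is governed by which of the four cases of the classification the pair $(T_i,D_i)$ belongs to. First, $\cD(E)$ is well defined by \autoref{lem:dlt_dual_complex}, and it is connected since $E=\Exc(\pi)$ is connected (it is the fibre of $\pi$ over the closed point of the local scheme $X$, and $\pi_*\cO_Y=\cO_X$). The crucial bookkeeping step is the identification of the link of $[E_i]$ in $\cD(E)$ with $\cD(D_i)$, and — what we actually use — of the auxiliary graph $G_{[E_i]}$ of \autoref{lem:dual_complex_2manifold} with the dual graph of $D_i$: its vertices are the components of $D_i$ and its edges are the nodes of $D_i$. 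This rests on the two universal homeomorphisms $E^{\nu}_{i,\overline{k}}\to E^{\nu}_i\to E_i$ (\autoref{prop:dlt_structure}.\autoref{prop:dlt_struct_geometry}) together with geometric integrality of the relevant strata: $E^{\nu}_i$ is geometrically normal by \autoref{lem:normal_geom_F_pure_is_geom_normal} with $H^0(E^\nu_i,\cO)=k$ separably closed, so $E_i$ and each component of the (smooth, hence geometrically reduced) boundary $\Delta_i$ are geometrically integral; consequently no component of a double or triple intersection $E_i\cap E_j$, $E_i\cap E_j\cap E_\ell$ splits or merges upon normalising or base--changing, and all incidence data match up on the nose.

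Next I would read off, from \autoref{lem:classification_dlt_surfaces}, the possible homeomorphism types of $\cD(D_i)$: it is empty in case \autoref{lem:classification_dlt_surfaces_it_CY}; a single point in cases \autoref{lem:classification_dlt_surfaces_it_rul_irr} and \autoref{lem:classification_dlt_surfaces_it_rat_ell} ($D_i$ irreducible); two disjoint points in case \autoref{lem:classification_dlt_surfaces_it_rul_red} ($D_i$ a disjoint union of two curves); and a cycle of length $\geq 2$, i.e.\ homeomorphic to $\bS^1$, in case \autoref{lem:classification_dlt_surfaces_it_rat_cyc} — here using that $(T_i,D_i)$ being dlt forces the $\bP^2$--with--nodal--cubic situation to have the node blown up, so case \autoref{lem:classification_dlt_surfaces_it_rat_cyc} genuinely has $\geq 2$ smooth rational components with nodal crossings, whose dual graph is a single cycle. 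This gives a dictionary: $[E_i]$ isolated $\Leftrightarrow$ case \autoref{lem:classification_dlt_surfaces_it_CY}; $[E_i]$ of degree $1$ $\Leftrightarrow$ $D_i$ irreducible $\Leftrightarrow$ case \autoref{lem:classification_dlt_surfaces_it_rul_irr} or \autoref{lem:classification_dlt_surfaces_it_rat_ell}; $[E_i]$ of degree $2$ lying on no $2$--simplex $\Leftrightarrow$ case \autoref{lem:classification_dlt_surfaces_it_rul_red}; and $[E_i]$ lying on a $2$--simplex $\Leftrightarrow$ case \autoref{lem:classification_dlt_surfaces_it_rat_cyc}, in which case $G_{[E_i]}$ is a cycle of length $\geq 2$.

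Then I would split into the three cases of the statement. If $\cD(D_i)=\emptyset$ for some $i$, then $E_i$ meets no other component, so by connectedness $E=E_i$ and $\cD(E)$ is a point; here $D_i=\Delta_{i,\overline{k}}=0$, so by \autoref{cor:full_exc_div_GFS_and_K_trivial} and \cite[Lemma 2.4]{Gongyo_Li_Patakfalvi_Schwede_Tanaka_Zong_On_rational_connectedness_of_globally_F_regular_threefolds} the irreducible surface $E^{\nu}_{\overline{k}}=E^{\nu}_{i,\overline{k}}$ is globally $F$--split with $K\sim 0$, and $(T_i,0)$ falls in case \autoref{lem:classification_dlt_surfaces_it_CY}, giving \autoref{prop:dlt_blowup_gorenstein_dual_complex_it_k3}. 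If no vertex lies on a $2$--simplex but $\cD(E)$ has $\geq 2$ vertices, then $\cD(E)$ is a connected, loop--free graph all of whose vertices have degree $1$ or $2$, hence a cycle or a path graph, and each $D_i$ is a disjoint union of at most two elliptic curves (cases \autoref{lem:classification_dlt_surfaces_it_rul} and \autoref{lem:classification_dlt_surfaces_it_rat_ell}); these elliptic curves are ordinary, since each component of $\Delta_i$ is globally $F$--split by $F$--adjunction (\autoref{rem:F_adj}, applied where $E^{\nu}_i$ is regular along $\Delta_i$ by \autoref{prop:dlt_structure}.\autoref{prop:dlt_structure_it5}, using that $(E^{\nu}_i,\Delta_i)$ is globally sharply $F$--split by \autoref{cor:full_exc_div_GFS_and_K_trivial}.\autoref{F_adj_cpnts}), hence so is its base change, and a globally $F$--split elliptic curve is ordinary; the refinements \autoref{prop:dlt_blowup_gorenstein_dual_complex_it_ec_deg2} and \autoref{prop:dlt_blowup_gorenstein_dual_complex_it_ec_deg1} are immediate from the dictionary, case \autoref{lem:classification_dlt_surfaces_it_rul_irr} being excluded for $p\neq 2$ already by \autoref{lem:classification_dlt_surfaces}. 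Finally, if some vertex lies on a $2$--simplex (case \autoref{lem:classification_dlt_surfaces_it_rat_cyc}), let $\cD_2\subseteq\cD(E)$ be the union of the closed $2$--simplices; it is a nonempty subcomplex, and it is open because a vertex $v$ of $\cD_2$ is in case \autoref{lem:classification_dlt_surfaces_it_rat_cyc}, where $G_v$ is a cycle of length $\geq 2$, so every edge at $v$ lies on exactly two $2$--simplices and hence the whole open star of $v$ lies in $\cD_2$; by connectedness $\cD_2=\cD(E)$, so every $G_v$ is a cycle of length $\geq 2$ and \autoref{lem:dual_complex_2manifold} shows $\cD(E)$ is a topological $2$--manifold, necessarily compact (a finite complex), connected, and without boundary (the cycle condition on the $G_v$'s forbids boundary edges); moreover each $E^{\nu}_{i,\overline{k}}$ is birational to $T_i$, hence to $\bP^2$ by \autoref{lem:classification_dlt_surfaces}.\autoref{lem:classification_dlt_surfaces_it_rat_cyc}. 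This is \autoref{prop:dlt_blowup_gorenstein_dual_complex_it_rc}.

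The main obstacle I expect is the first step: making rigorous the identification of the local combinatorics of $\cD(E)$ at $[E_i]$ with the dual graph of $D_i$. One must chase components of triple intersections $E_i\cap E_j\cap E_\ell$ through the two universal homeomorphisms $E^{\nu}_{i,\overline{k}}\to E^{\nu}_i\to E_i$ and through base change to $\overline{k}$, and verify — using condition $(\star)$, separable closedness of $k$, and geometric integrality of the $E_i$ and of the components of $\Delta_i$ — that the bijections of edges and $2$--simplices with components and nodes of $D_i$ are compatible with all incidence relations. The clopen propagation of case \autoref{lem:classification_dlt_surfaces_it_rat_cyc} and the extraction of ordinariness via $F$--adjunction are comparatively routine.
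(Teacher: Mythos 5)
Your proposal is correct and follows essentially the same route as the paper's proof: identify the local structure of $\cD(E)$ at $[E_i]$ with the dual graph of $D_i$ via the universal homeomorphisms of \autoref{prop:dlt_structure}, invoke connectedness of $E$, read off the trichotomy from \autoref{lem:classification_dlt_surfaces}, get ordinarity from $F$--adjunction, and apply \autoref{lem:dual_complex_2manifold} in the two--dimensional case. The only cosmetic difference is that you establish homogeneity of the case across components by a clopen argument on the union of $2$--simplices, where the paper simply observes that a shared boundary curve must be of the same type (elliptic vs.\ rational) on both adjacent components.
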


\begin{proof}
    By definition, the shape of the dual complex $\cD(E)$ around a vertex $E_i$ is determined by $(E-E_i)|_{E_i}$: the edges containing $E_i$ correspond to the components of $(E-E_i)|_{E_i}$, and the $2$--simplices containing $E_i$ correspond to the components of intersections of components of $(E-E_i)|_{E_i}$. Hence, as $E_{i,\overline{k}}^{\nu}\to E_i$ is a universal homeomorphism, the local behaviour of $\cD(E)$ around $E_i$ is completely determined by the divisor $D_i \coloneqq \Delta_{i,\overline{k}}$ on $E_{i,\overline{k}}^{\nu}$.
    
    Recall that the minimal resolution $T_i\to E_{i,\overline{k}}^{\nu}$ is an isomorphism over $D_i$ by \autoref{lem:dlt_blowup_gorenstein_comps_of_exc}, and the pair $(T_i,D_i)$ satisfies the hypotheses of \autoref{lem:classification_dlt_surfaces} by \autoref{lem:dlt_blowup_gorenstein_comps_of_exc}. Therefore, $D_i$ is either empty, a cycle of rational curves, or a disjoint union of at most two elliptic curves. Note also that we already have a useful piece of global information about $\cD(E)$: by Zariski's main theorem, $E$ is connected, and thus $\cD(E)$ is connected as well. We now distinguish cases.
    \begin{enumerate}
    \itemsep 2mm
    \item Assume that for some $i$, we have that $D_i=0$. Then $E_i$ is disjoint from all other components so by connectedness of $E$, we know that $E = E_i$. Since $E_i^{\nu}$ is globally $F$--split by \autoref{cor:full_exc_div_GFS_and_K_trivial}, it is also geometrically globally $F$--split by \cite[Corollary 2.8]{Gongyo_Li_Patakfalvi_Schwede_Tanaka_Zong_On_rational_connectedness_of_globally_F_regular_threefolds}. Hence, we are in case \autoref{prop:dlt_blowup_gorenstein_dual_complex_it_k3} of the present proposition. 
    
    \item By the above, we may assume that $D_i\neq 0$ for all $i$. Since $E$ is connected and each $E^{\nu}_{i,\overline{k}} \to E_i$ is a universal homeomorphism (\autoref{cor:strata_of_dlt_locus_is_S_2_up_to_univ_homeo}), either all $D_i$'s are disjoint unions elliptic curves, or they are all cycles of smooth rational curves. Assume that we are in the first case, i.e.\ disjoint unions of elliptic curves.
    
    In particular, all triple intersections of the $E_i$'s are empty, so $\cD(E)$ is one--dimensional. Seeing $\cD(E)$ as a graph, it is then connected with vertices of degree at most $2$, and so either a cycle or a path graph. Finally, by $F$--adjunction (see \autoref{rem:F_adj}), every component of $D_i$ is globally $F$--split, and thus an ordinary elliptic curve. As the degree of the vertex $E_i$ in $\cD(E)$ is given by the number of irreducible components of $D_i$, \autoref{lem:classification_dlt_surfaces} gives that we are in case \autoref{prop:dlt_blowup_gorenstein_dual_complex_it_ec} of the present proposition.
    
    \item We are now left with the last case where all the components $E^{\nu}_{i,\overline{k}}$ are rational surfaces and each $D_i$ is a cycle of smooth rational curves. We are going to prove that we are in case \autoref{prop:dlt_blowup_gorenstein_dual_complex_it_rc}. Note that then $\cD(E)$ is $2$--dimensional: as $D_i$ is a cycle, it has at least two irreducible components with non--empty intersection. As explained in the first paragraph of the proof, their intersection corresponds to a $2$--simplex containing the vertex $E_i$. Then, as $\cD(E)$ is by definition a finite $\Delta$--complex (hence compact) and as $E$ (and thus $\cD(E)$) is connected, it suffices to prove that $\cD(E)$ is a topological manifold. But note that for any vertex $E_i$ of $\cD(E)$, the edge--graph $G_{E_i}$ around $E_i$ (see \autoref{lem:dual_complex_2manifold}) is precisely given by the dual graph of $(E-E_i)|_{E_i}$, as explained in the beginning of the proof. Since $(E-E_i)|_{E_i}$ is universally homeomorphic to $D_i$, this dual graph is a cycle of length at least $2$. So we conclude by \autoref{lem:dual_complex_2manifold} that $\cD(E)$ is a topological manifold.\qedhere
\end{enumerate}
\end{proof}
\begin{remark}\label{rem:dual_complex}
    \begin{itemize}
        \item Note that \autoref{prop:dlt_blowup_gorenstein_dual_complex} and \autoref{lem:dual_complex_2manifold} are instances of a more general local--to--global principle: if for every vertex $E_i$ the dual complex of $(E-E_i)|_{E_i}$ is a finite quotient of a sphere (see \autoref{rem:algebro_geometric_Poincare}), then $\cD(E)$ is an orbifold. If furthermore it is always a sphere itself, then $\cD(E)$ is a manifold (see \cite[Lemma 5.2.5]{Mauri_Mazzon_Stevenson_On_the_geometric_P=W_conjecture} for this circle of ideas). The case \autoref{prop:dlt_blowup_gorenstein_dual_complex}.\autoref{prop:dlt_blowup_gorenstein_dual_complex_it_ec} is then just an instance of the fact that the only compact connected one--dimensional orbifolds are the interval and $\bS^1$.
        \item Note that in the proof above, the only instances where the $F$--purity assumption was used was to reduce to the case of an algebraically closed field, and to argue about global $F$--splitness of the surface in \autoref{prop:dlt_blowup_gorenstein_dual_complex_it_k3}, or ordinarity of the elliptic curves in \autoref{prop:dlt_blowup_gorenstein_dual_complex_it_ec}.

        In particular, if $X$ was of finite type over an algebraically closed field to begin with, then the same statement would hold without the $F$--purity condition of $X$ (but of course without the global $F$--splitness or the ordinarity consequences discussed above).
    \end{itemize}
\end{remark}

\subsection{The proof}

We now have all the required ingredients to prove the main theorem announced in the introduction, in the case of threefolds. 

\begin{thm}[{\autoref{Theorem_B}}]\label{main_thm_threefolds}
Assume that $p \neq 2$. Then every normal, quasi--Gorenstein, $F$--pure threefold with $\bQ_p$--rational singularities essentially of finite type over an $F$--finite field has canonical singularities.
\end{thm}

\begin{proof}
    Let $X$ be a threefold essentially of finite type over an $F$--finite field, such that $X$ is $F$--pure, has $\bQ_p$--rational singularities and that $K_X$ is Cartier. We want to show that $X$ has only canonical singularities. Since all the notions involved are invariant under étale extensions and localizations, we may assume that $X$ is the spectrum of a strictly Henselian ring. We will proceed by contradiction, assuming that $X$ is not klt (this is equivalent to being non--canonical since $K_X$ is Cartier).
    
    First of all, $X$ is canonical in codimension two by \autoref{main_thm_surfaces}. Since $\dim(X)  = 3$ and $X$ is local, it can only have a unique non--klt center, namely the closed point $x \in X$. Let $\pi \colon Y \to X$ be a dlt modification as in \autoref{existence_dlt_modifications}, so that by the contradiction hypothesis, we have $K_Y + E \sim \pi^*K_X$ with $E = \Exc(\pi) = \pi^{-1}(x)$ (hence it is non--zero). As in \autoref{lem:dlt_blowup_gorenstein_comps_of_exc}, set $k \coloneqq H^0(E^{\wn}, \cO_{E^{\wn}})$, where we recall that $E^{\wn}$ is the weak normalization of $E$ (the field $k$ is then automatically separably closed).
    
    Thanks to the $\bQ_p$--rationality hypothesis, we know by \autoref{lem:proper_base_change} that for all $r > 0$, 
    \begin{equation}\label{threefold_Q_p_rationality}
        H^r_{\et}(E, \bQ_p) = 0.
    \end{equation}
    Before doing a case by case analysis, let us show that 
    \begin{equation}\label{eq:non_vanishing_H2}
        H^2_{\et}(E, \bF_p) \neq 0
    \end{equation} 
    (this will only be used in Case 4 of the proof). By \autoref{cor:full_exc_div_GFS_and_K_trivial}, $E^{\wn}$ is globally $F$--split and $\omega_{E^{\wn}} \cong \cO_{E^{\wn}}$. By Serre duality, we obtain that \[ H^2(E^{\wn}, \cO_{E^{\wn}}) \cong H^0(E^{\wn}, \omega_{E^{\wn}})^{\vee} \neq 0 \] so by \autoref{lem:baby_Riemann_Hilbert}, we deduce that $H^2_{\et}(E^{\wn}, \bF_p) \neq 0$. Since $E^{\wn} \to E$ is a universal homeomorphism (see \autoref{lem:weak_nor_univ_homeo}), also $H^2_{\et}(E, \bF_p) \neq 0$. \medskip
    
    We will now do a case by case analysis of what $E$ can be, according to \autoref{prop:dlt_blowup_gorenstein_dual_complex}. Let $\cD(E)$ denote the associated dual complex.\medskip

    \textbf{Case 1: the dual complex is a point.} Equivalently, we know by \emph{loc.\ cit.} that $E^{\nu}_{\overline{k}}$ is an irreducible globally $F$--split surface with rational singularities, such that $K_{E^{\nu}_{\overline{k}}} \sim 0$. Let $T \to E^{\nu}_{\overline{k}}$ be a minimal resolution of $E^{\nu}_{\overline{k}}$. Since it is crepant, $T$ also is globally $F$--split and satisfies $K_T \sim 0$. For any $r > 0$, we then have \[ H^r_{\et}(T, \bQ_p) \expl{=}{\autoref{lem:dlt_blowup_gorenstein_comps_of_exc}} H^r_{\et}(E, \bQ_p) \expl{=}{\autoref{threefold_Q_p_rationality}} 0. \]
    However, by the Bombieri--Mumford classification and the fact that $p \neq 2$ (\cite[Theorems 5 and 6 and the Proposition afterwards]{Bombieri_Mumford_Enriques_classification_of_surfaces_in_char_p_II}), the surface $T$ either satisfies $\Alb(T) \neq 0$ or it is an ordinary K3 surface. In the latter case, we immediately conclude by \autoref{lem:wo_o_cohomology_ordinary} that $H^2_{\et}(T, \bQ_p) \neq 0$, giving a contradiction.
    
    Assume now that $\Alb(T) \neq 0$. Note that we have an isomorphism \[ H^1_{\et}(\Alb(T), \bQ_p) \cong H^1_{\et}(T, \bQ_p). \] Indeed, there is such an isomorphism in crystalline cohomology by \cite[II.3.11.2]{Illusie_Complexe_de_de_Rham_Witt_et_cohomologie_cristalline}, so there is also an isomorphism in $\bQ_p$--cohomology by II.5.4 in \emph{loc.\ cit.}. Thus, we also conclude that $H^1_{\et}(T, \bQ_p) \neq 0$, since equation II.7.1.2 in \emph{loc.\ cit.} and the fact that $\Alb(T)$ is ordinary (see \cite[Theorem 1.2]{Ejiri_When_is_the_Albanese_morphism_an_algebraic_fiber_space_in_positive___characteristic?}) tell us that $\dim_{\bQ_p}H^1_{\et}(\Alb(T), \bQ_p) = \dim(\Alb(T))$. We then have a contradiction in this case too.  \medskip
    
    \textbf{Case 2: the dual complex is a cycle.} Let $n$ denote the number of irreducible components of $E$. Since $\cD(E)$ is a cycle, all $\Delta_{i, \overline{k}}$ must consist of disjoint unions of two elliptic curves. Hence, each non--empty intersection $E_{ij}$ is either irreducible if $n > 2$, or has two disjoint irreducible components if $n = 2$. Hence, we have \[ \bigoplus_{i < j} H^0_{\et}(E_{ij}, \bQ_{p, E_{ij}}) = \bQ_p^{\oplus n}. \]
    Consider the exact sequence \[ 0 \to \bQ_{p, E} \to \bigoplus_i \bQ_{p, E_i} \to \bigoplus_{i < j} \bQ_{p, E_{ij}} \to 0 \] given by \autoref{lem:exact_dual_complex} (since $\cD(E)$ has dimension one, there are no non--empty triple intersections). Since $H^1_{\et}(E, \bQ_p) = 0$, we have an induced exact sequence of cohomology groups

    \[ 0 \to \bQ_p \to \bQ_p^{\oplus n} \to \bQ_p^{\oplus n} \to 0, \] which is impossible. \medskip

    \textbf{Case 3: the dual complex is a path graph.} As before, let $n$ denote the number of irreducible components of $E$. As usual, let us write $D_i \coloneqq \Delta_{i, \overline{k}}$ for all $i$.
    
    By assumption, we can order the irreducible components $E_1, \dots, E_n$ where $E_1$ is only connected to $E_2$, $E_n$ is only connected to $E_{n - 1}$ and for all $1 < i < n$, $E_i$ is connected to both $E_{i - 1}$ and $E_{i + 1}$. As in \autoref{lem:dlt_blowup_gorenstein_comps_of_exc}, let $\pi_i\colon T_i\to E^{\nu}_{i,\overline{k}}$ be the minimal resolution, which we recall is an isomorphism over $D_i$. By \autoref{prop:dlt_blowup_gorenstein_dual_complex}.\autoref{prop:dlt_blowup_gorenstein_dual_complex_it_ec}, $(T_1,D_1)$ and $(T_n,D_n)$ fall into case \autoref{lem:classification_dlt_surfaces_it_rat_ell} of \autoref{lem:classification_dlt_surfaces} (as we are in characteristic $p \neq 2$, case \autoref{lem:classification_dlt_surfaces_it_rul_irr} is impossible). On the other hand, for $1<i<n$, the pair $(T_i,D_i)$ falls into case \autoref{lem:classification_dlt_surfaces_it_rul_red}. For any $1\leq i\leq n$, let $D_{i,1}$ (resp.\ $D_{i,-1}$) be the component of $D_i$ coming from the intersection $E_{i,i+1}$ (resp.\ $E_{i,i-1})$. In particular, we have $D_{1,-1}=0$ and $D_{n,1}=0$.

    In order to analyze the $\bQ_p$--cohomology of $E$, we compute it for all the strata, i.e.\ for all $E_i$ and all $E_{i,i+1}$. These computations fall into three cases.\medskip

    \textit{Cohomology of $E_i$ for $1<i<n$.}
    Recall that $(T_i,D_i)$ falls into case \autoref{lem:classification_dlt_surfaces_it_rul} of \autoref{lem:classification_dlt_surfaces}. Therefore, there exists a birational morphism $T_i\to S_i$ where $S_i$ is smooth projective and ruled over an elliptic curve $C_i$. By composing, we hence obtain a fibration $f_i\colon T_i\to C_i$ whose fibers are trees of smooth rational curves. In particular, the components of $D_i$ (which are ordinary elliptic curves) dominate $C_i$, so the elliptic curve $C_i$ is ordinary as well. 
    
    In summary, combining the following two facts
    \[ \begin{cases}
        (F1):\quad Rf_{i, *}\cO_{T_i} = \cO_{C_i}; \\
        (F2):\quad C_i \mbox{ is an ordinary elliptic curve, }
    \end{cases} \] 
    we deduce that \[ H^1_{\et}(E_i,\bQ_p)\expl{\xrightarrow{\makebox[2cm]{$\cong$}}}{\autoref{lem:dlt_blowup_gorenstein_comps_of_exc}}H^1_{\et}(T_i,\bQ_p) \expl{\xleftarrow{\makebox[2cm]{$\cong$}}}{(F1) and \autoref{we_suck_at_writing}} H^1_{\et}(C_i,\bQ_p)\expl{=}{(F2)}\bQ_p,\] and similarly \[H^2_{\et}(E_i,\bQ_p)=0.\]
    
    \textit{Cohomology of $E_i$ for $i\in\{1,n\}$.} As explained in the beginning of Case 3 (where we used that $p \neq 2$), in this case $T_i$ is rational, so we have $H^1(T_i, \cO_{T_i}) = H^2(T_i, \cO_{T_i}) = 0$. We then obtain that $H^r_{\et}(E_i,\bQ_p)=0$ for all $r>0$. \medskip
    
    \textit{Cohomology of $E_{i,i+1}$.} Note that for $1\leq i<n$, $E_{i,i+1}$ is universally homeomorphic to $D_{i,1}$, which is an ordinary elliptic curve. Therefore, we obtain that \[H^1_{\et}(E_{i,i+1},\bQ_p) \expl{=}{\autoref{lem:topological_invariance_of_etale_cohomology}} H^1_{\et}(D_{i,1},\bQ_p)=\bQ_p.\]
    
    Now we are ready to analyze the $\bQ_p$--cohomology of $E$. Consider the exact sequence 
    \begin{align}\label{eq:SES_dual_comp}
        0 \to \bQ_{p,E} \to \bigoplus_{i} \bQ_{p,E_i} \to \bigoplus_{i < j}\bQ_{p,E_{ij}} \to 0
    \end{align}
    given by \autoref{lem:exact_dual_complex}. Then the long exact sequence in cohomology gives
    \begin{align}\label{eq:LES_Phi_K}
        0\to H^1_{\et}(E,\bQ_p)\to\bigoplus_{i} H^1_{\et}(E_i,\bQ_p) \overset{\Phi}{\longrightarrow}\bigoplus_{i<j}H^1_{\et}(E_{ij},\bQ_p)\to H^2_{\et}(E,\bQ_p)\to 0
    \end{align}
    (the sequence in $H^0$ groups is exact because the graph is a path). As the two outer groups vanish by \autoref{threefold_Q_p_rationality}, the map $\Phi$ is an isomorphism. On the other hand, from the above computations, we have \[ \dim_{\bQ_p}\left(\bigoplus_{i<j}H^1_{\et}(E_{ij},\bQ_p)\right) = n - 1, \] and \[ \dim_{\bQ_p}\left(\bigoplus_{i} H^1_{\et}(E_i,\bQ_p)\right) = n-2, \] so we have a contradiction.\medskip
        
    \textbf{Case 4: the dual complex is a two--dimensional connected closed compact manifold.} We are in case \autoref{prop:dlt_blowup_gorenstein_dual_complex_it_rc} of \autoref{prop:dlt_blowup_gorenstein_dual_complex}, so each $T_i$ is birational to $\bP^2$. In particular, we have $H^1(T_i, \cO_{T_i}) = H^2(T_i, \cO_{T_i} ) = 0$ so for $L \in \{ \bF_p, \bQ_p \}$, we have \[ H^r_{\et}(E_i, L) \expl{=}{\autoref{lem:dlt_blowup_gorenstein_comps_of_exc}} H^r_{\et}(T_i, L) \expl{=}{\autoref{we_suck_at_writing}} 0 \] for all $r>0$. Similarly, as $D_i$ is universally homeomorphic to $(E-E_i)|_{E_i}$, every component of an intersection $E_{ij}$ for $i<j$ is universally homeomorphic to a smooth rational curve. Since $E_{ij}$ is the disjoint union of its irreducible components by \autoref{prop:dlt_structure}.\autoref{prop:dlt_struct_geometry}, we deduce as above that \[H^r_{\et}(E_{ij}, L)=0\] for all $r>0$ and $L\in\{\bF_p,\bQ_p\}$. 
    
    Hence, as the higher $\bQ_p$-- and $\bF_p$--cohomology of all strata of $E$ vanishes, we obtain by \autoref{lem:cohomology_dual_complex_and_coherent} that for all $r \geq 0$ and $L\in\{\bF_p,\bQ_p\}$, we have 
    \[H^r(\cD(E), L) = H^r_{\et}(E, L).\]
    Now, take $L = \bQ_p$. We then deduce that \[\chi(\cD(E), \bQ_p)=\chi_{\et}(E,\bQ_p) \expl{=}{\autoref{threefold_Q_p_rationality}} 1,\] so by the classification of two--dimensional closed compact manifolds, $\cD(E)$ must be the real projective plane $\bR\bP^2$. Now, take $L = \bF_p$. We then obtain in particular that \[ 0 \expl{=}{$p \neq 2$} H^2(\bR\bP^2, \bF_p) = H^2(\cD(E), \bF_p) = H^2_{\et}(E, \bF_p), \] contradicting \autoref{eq:non_vanishing_H2}.
\end{proof}

\begin{remark}\label{example:singular_enriques_surfaces}

Already in the case where the dual complex is a point, we used that $p \neq 2$. This is necessary, as the following example shows: Let $S$ be a singular Enriques surface. As already mentioned in the introduction, a cone over $S$ has $\bQ_p$--rational singularities. It is also $F$--pure since $S$ is globally $F$--split (use \cite[Proposition 5.3]{Schwede_Smith_Globally_F_regular_an_log_Fano_varieties}). It is also quasi--Gorenstein and strictly log canonical, since $\omega_S \cong \cO_S$. This shows that \autoref{main_thm_threefolds} fails for $p = 2$.

In the cases where the dual complex was a path graph or a two--dimensional manifold, we also used that $p \neq 2$. It would be interesting to find counterexamples in those cases when $p = 2$.
\end{remark}


\section{The case of fourfolds}

\begin{thm}[{\autoref{Theorem_C}}]\label{main_thm_foufolds}
    Let $X$ be a normal fourfold of finite type over a perfect field $k$ of characteristic $p > 5$. Assume that $X$ is quasi--Gorenstein, $F$--pure, has $\bQ_p$--rational singularities and satisfies \autoref{hyp}. Then $X$ has canonical singularities.
\end{thm}
\begin{proof}
    As in the threefold case, we assume by contradiction that $X$ does not have canonical singularities. First of all, by \autoref{main_thm_threefolds}, we know that $X$ is canonical in codimension 3, so by shrinking we may assume that some closed point $x \in X$ is the only log canonical center. Let $\pi \colon Y \to X$ be a dlt modification (see \autoref{existence_dlt_modifications}), and let $E$ denote the exceptional divisor, so that $K_Y + E \sim \pi^*K_X$. We may now base change this setup to $\overline{k}$, and hence assume that $k$ is algebraically closed. As in the surface case, we then know by our hypotheses that $H^i_{\et}(E, \bQ_p) = 0$ for all $i > 0$. 

    Since $\wk \colon E^{\wk} \to E$ is a universal homeomorphism by \autoref{lem:weak_nor_univ_homeo}, we know by \autoref{lem:topological_invariance_of_etale_cohomology} that $H^i_{\et}(E^{\wk}, \bQ_p) = 0$ for all $i > 0$, and hence $\chi_{\et}(E^{\wk}, \bQ_p) = 1$. By \autoref{cor:full_exc_div_GFS_and_K_trivial}, we also know that $E^{\wn}$ is globally $F$--split and $\omega_{E^{\wn}} \cong \cO_{E^{\wn}}$.

    Assume for now that $E^{\wk}$ is Cohen--Macaulay. Then combining $\omega_{E^{\wk}} \cong \cO_{E^{\wk}}$, Serre duality and Cohen--Macaulayness of $E^{\wk}$, we deduce that \[ \chi(E^{\wk}, \cO_{E^{\wk}}) = 0 \] (recall that $\dim(E^{\wk}) = 3$, which is an odd number). On the other hand, we have \[ \chi(E^{\wk}, \cO_{E^{\wk}}) \expl{=}{\autoref{lem:baby_Riemann_Hilbert}}  \chi_{\et}(E^{\wk}, \bF_p) \expl{=}{\autoref{lem:euler_char}} \chi_{\et}(E^{\wk}, \bQ_p) = 1,  \] giving us our final contradiction. \medskip

    Thus, we are left to show that $E^{\wk}$ is Cohen--Macaulay. Let $E = \bigcup_i E_i$ denote the decomposition of $E$ into irreducible components. Since each $E_i$ is normal up to universal homeomorphism (see \autoref{prop:dlt_structure}.\autoref{prop:dlt_struct_geometry}), we know by \autoref{lem:topological_invariance_of_etale_cohomology} that 
    \begin{equation}\label{eq:end_paper}
        \cO_{E_i}^{1/p^{\infty}} = \nu_*\cO_{E_i^{\nu}}^{1/p^{\infty}},
    \end{equation} where $\nu \colon E_i^{\nu} \to E_i$ denotes the normalization morphism. Since each $E_i^{\nu}$ is a dlt type threefold by \autoref{prop:dlt_structure}.\autoref{prop:dlt_structure_adjunction} and $p > 5$, it is Cohen--Macaulay by \cite[Theorems 3 and 19] {Bernasconi_Kollar_Vanishing_theorems_for_threefolds_in_char_p}. By \cite[Tags ~\href{http://stacks.math.columbia.edu/tag/0955}{0955} and ~\href{http://stacks.math.columbia.edu/tag/0AVZ}{0AVZ}]{stacks-project}, this is equivalent to saying that for all closed points $x \in E_i^{\nu}$ and $j < \dim(X)$, we have the vanishing \[ H^j_x(E_i^{\nu}, \cO_{E_i^\nu}) = 0, \] where $H^j_x(X, -)$ denotes the $j$'th derived functor of the functor $H^0_x(X, -)$ of global sections supported at the point $x$ on abelian sheaves. Since this is a purely topological notion and since this functor commutes with colimits, this shows that the same vanishing holds for $\cO_{E_i^{\nu}}^{1/p^\infty}$. Given that $\nu$ is a homeomorphism, \autoref{eq:end_paper} gives us that the same vanishing holds for the sheaf $\cO_{E_i}^{1/p^\infty}$ on $E_i$, namely $\cO_{E_i}^{1/p^\infty}$ is Cohen--Macaulay.
    
    Note that for $l \geq 2$, the sheaves $\cO_{E_{i_1 \dots i_l}}^{1/p^{\infty}}$ are also Cohen--Macaulay. Indeed, this follows from the fact that $\dim(E_{i_1 \dots i_l}) \leq 2$, that $(E_{i_1 \dots i_l})^{\wn}$ is $S_2$ (see \autoref{prop:dlt_structure}.\autoref{prop:dlt_struct_geometry}) and the same argument as above. By \autoref{lem:exact_dual_complex} we have an exact sequence \[ 0 \to \bF_{p, E} \to \bigoplus_i \bF_{p, E_i} \to \bigoplus_{i < j}\bF_{p, E_{ij}} \to \cdots. \]
    By the Riemann--Hilbert correspondence (\cite[Theorem 10.2.7]{Bhatt_Lurie_A_Riemann-Hilbert_correspondence_in_positive_characteristic}), we deduce an exact sequence \begin{equation}\label{pf:les_exceptional_perfection}
        0 \to \cO_E^{1/p^{\infty}} \to \bigoplus_{i} \cO_{E_i}^{1/p^{\infty}} \to \bigoplus_{i < j} \cO_{E_{ij}}^{1/p^{\infty}} \to \cdots.
    \end{equation} 
    Let us now show that $\cO_E^{1/p^{\infty}}$ is Cohen--Macaulay. Let $x \in E$ be a closed point. By \autoref{pf:les_exceptional_perfection}, we know that for all $l \geq 0$, \[ H^l_x\left(E, \cO_E^{1/p^{\infty}}\right) \cong H^l_x\left(E, \: \bigoplus_{i} \cO_{E_i}^{1/p^{\infty}} \to \bigoplus_{i < j} \cO_{E_{ij}}^{1/p^{\infty}} \to \cdots \right), \]
    where the right term denotes hyper--local--cohomology. In particular, we have a spectral sequence \[ H^a_x\left(E, \bigoplus_{i_1, \dots, i_{b + 1}}\cO_{E_{i_1\dots i_{b + 1}}}^{1/p^{\infty}}\right) \implies H^{a + b}_x\left(E, \cO_E^{1/p^{\infty}}\right). \] For $a + b < 3$, the left term above must vanish, since $\cO_{E_{i_1\dots i_{b + 1}}}^{1/p^{\infty}}$ is Cohen--Macaulay of dimension $3 - b$ (see \autoref{prop:dlt_structure}.\autoref{prop:dlt_struct_geometry}). A straightforward spectral sequence argument then shows that $\cO_E^{1/p^{\infty}}$ is Cohen--Macaulay. Given that $E^{\wk} \to E$ is a universal homeomorphism (\autoref{lem:weak_nor_univ_homeo}), we can argue as previously to obtain that $\cO_{E^{\wk}}^{1/p^{\infty}}$ is also Cohen--Macaulay. By global $F$--splitness of $E^{\wk}$, the sheaf $\cO_{E^{\wk}}$ is a direct summand of $\cO_{E^{\wk}}^{1/p^{\infty}}$, whence $E^{\wk}$ is indeed Cohen--Macaulay.   
\end{proof}

\bibliographystyle{amsalpha} 
\bibliography{includeNice}

@book {Hartshorne_Algebraic_geometry,
    AUTHOR = {Hartshorne, R.},
     TITLE = {Algebraic geometry},
      NOTE = {Graduate Texts in Mathematics, No. 52},
 PUBLISHER = {Springer-Verlag},
   ADDRESS = {New York},
      YEAR = {1977},
     PAGES = {xvi+496},
      ISBN = {0-387-90244-9},
   MRCLASS = {14-01},
  MRNUMBER = {MR0463157 (57 \#3116)},
MRREVIEWER = {Robert Speiser},
}

@book {Kollar_Mori_Birational_geometry_of_algebraic_varieties,
    AUTHOR = {Koll{\'a}r, J. and Mori, S.},
     TITLE = {Birational geometry of algebraic varieties},
    SERIES = {Cambridge Tracts in Mathematics},
    VOLUME = {134},
      NOTE = {With the collaboration of C. H. Clemens and A. Corti,
              Translated from the 1998 Japanese original},
 PUBLISHER = {Cambridge University Press},
   ADDRESS = {Cambridge},
      YEAR = {1998},
     PAGES = {viii+254},
      ISBN = {0-521-63277-3},
   MRCLASS = {14E30},
  MRNUMBER = {MR1658959 (2000b:14018)},
MRREVIEWER = {Mark Gross},
}

@book {Mumford_Abelian_varieties,
    AUTHOR = {Mumford, D.},
     TITLE = {Abelian varieties},
    SERIES = {Tata Institute of Fundamental Research Studies in Mathematics},
    VOLUME = {5},
 PUBLISHER = {Published for the Tata Institute of Fundamental Research,
              Bombay by Oxford University Press, London},
      YEAR = {1970},
     PAGES = {viii+242},
   MRCLASS = {14.51},
  MRNUMBER = {282985},
MRREVIEWER = {James Milne},
}

@book {Kollar_Rational_curves_on_algebraic_varieties,
    AUTHOR = {Koll{\'a}r, J.},
     TITLE = {Rational curves on algebraic varieties},
    SERIES = {Ergebnisse der Mathematik und ihrer Grenzgebiete. 3. Folge. A
              Series of Modern Surveys in Mathematics [Results in
              Mathematics and Related Areas. 3rd Series. A Series of Modern
              Surveys in Mathematics]},
    VOLUME = {32},
 PUBLISHER = {Springer-Verlag, Berlin},
      YEAR = {1996},
     PAGES = {viii+320},
      ISBN = {3-540-60168-6},
   MRCLASS = {14-02 (14C05 14E05 14F17 14J45)},
  MRNUMBER = {1440180},
MRREVIEWER = {Yuri G. Prokhorov},
       DOI = {10.1007/978-3-662-03276-3},
       URL = {https://doi.org/10.1007/978-3-662-03276-3},
}

@book {Kollar_Singularities_of_the_minimal_model_program,
    AUTHOR = {Koll{\'a}r, J.},
     TITLE = {Singularities of the minimal model program},
    SERIES = {Cambridge Tracts in Mathematics},
    VOLUME = {200},
      NOTE = {With a collaboration of S\'{a}ndor Kov\'{a}cs},
 PUBLISHER = {Cambridge University Press, Cambridge},
      YEAR = {2013},
     PAGES = {x+370},
      ISBN = {978-1-107-03534-8},
   MRCLASS = {14E30 (14B05)},
  MRNUMBER = {3057950},
MRREVIEWER = {Tommaso De Fernex},
       DOI = {10.1017/CBO9781139547895},
       URL = {https://doi.org/10.1017/CBO9781139547895},
}

@preamble{
   "\def\lfhook#1{\setbox0=\hbox{#1}{\ooalign{\hidewidth
    \lower1.5ex\hbox{'}\hidewidth\crcr\unhbox0}}} "
}

@article{Gongyo_Li_Patakfalvi_Schwede_Tanaka_Zong_On_rational_connectedness_of_globally_F_regular_threefolds,
    AUTHOR = {Gongyo, Y. and Li, Z. and Patakfalvi, Zs. and
              Schwede, K. and Tanaka, H. and Zong, R.},
     TITLE = {On rational connectedness of globally {$F$}-regular
              threefolds},
   JOURNAL = {Adv. Math.},
  FJOURNAL = {Advances in Mathematics},
    VOLUME = {280},
      YEAR = {2015},
     PAGES = {47--78},
      ISSN = {0001-8708},
   MRCLASS = {14E30 (13A35 14D06 14M22)},
  MRNUMBER = {3350212},
MRREVIEWER = {Yong Hu},
       DOI = {10.1016/j.aim.2015.04.012},
       URL = {https://doi.org/10.1016/j.aim.2015.04.012},
}

@article {Illusie_Complexe_de_de_Rham_Witt_et_cohomologie_cristalline,
    AUTHOR = {Illusie, L.},
     TITLE = {Complexe de de {R}ham-{W}itt et cohomologie cristalline},
   JOURNAL = {Ann. Sci. \'{E}cole Norm. Sup. (4)},
  FJOURNAL = {Annales Scientifiques de l'\'{E}cole Normale Sup\'{e}rieure. Quatri\`eme
              S\'{e}rie},
    VOLUME = {12},
      YEAR = {1979},
    NUMBER = {4},
     PAGES = {501--661},
      ISSN = {0012-9593},
   MRCLASS = {14F30},
  MRNUMBER = {565469},
MRREVIEWER = {William E. Lang},
       URL = {http://www.numdam.org/item?id=ASENS_1979_4_12_4_501_0},
}

@incollection {Bombieri_Mumford_Enriques_classification_of_surfaces_in_char_p_II,
    AUTHOR = {Bombieri, E. and Mumford, D.},
     TITLE = {Enriques' classification of surfaces in char. {$p$}. {II}},
 BOOKTITLE = {Complex analysis and algebraic geometry},
     PAGES = {23--42},
 PUBLISHER = {Iwanami Shoten Publishers, Tokyo},
      YEAR = {1977},
   MRCLASS = {14J10},
  MRNUMBER = {491719},
MRREVIEWER = {S. L. Kleiman},
}

@book {Serre_Local_fields,
    AUTHOR = {Serre, J.-P.},
     TITLE = {Local fields},
    SERIES = {Graduate Texts in Mathematics},
    VOLUME = {67},
      NOTE = {Translated from the French by Marvin Jay Greenberg},
 PUBLISHER = {Springer-Verlag, New York-Berlin},
      YEAR = {1979},
     PAGES = {viii+241},
      ISBN = {0-387-90424-7},
   MRCLASS = {12Bxx},
  MRNUMBER = {554237},
}

@article {Schwede_Smith_Globally_F_regular_an_log_Fano_varieties,
    AUTHOR = {Schwede, K. and Smith, K. E.},
     TITLE = {Globally {$F$}-regular and log {F}ano varieties},
   JOURNAL = {Adv. Math.},
  FJOURNAL = {Advances in Mathematics},
    VOLUME = {224},
      YEAR = {2010},
    NUMBER = {3},
     PAGES = {863--894},
      ISSN = {0001-8708},
     CODEN = {ADMTA4},
   MRCLASS = {14J45 (13A35 14B05)},
  MRNUMBER = {2628797 (2011e:14076)},
MRREVIEWER = {Yukihide Takayama},
       DOI = {10.1016/j.aim.2009.12.020},
       URL = {http://dx.doi.org/10.1016/j.aim.2009.12.020},
}

@article {Mukai_Duality_between_DX_and_D_hat_X,
    AUTHOR = {Mukai, Shigeru},
     TITLE = {Duality between {$D(X)$}\ and {$D(\hat X)$}\ with its
              application to {P}icard sheaves},
   JOURNAL = {Nagoya Math. J.},
  FJOURNAL = {Nagoya Mathematical Journal},
    VOLUME = {81},
      YEAR = {1981},
     PAGES = {153--175},
      ISSN = {0027-7630},
   MRCLASS = {14K05 (14H40)},
  MRNUMBER = {607081},
MRREVIEWER = {Allen B. Altman},
       URL = {http://projecteuclid.org/euclid.nmj/1118786312},
}

@article {Ejiri_When_is_the_Albanese_morphism_an_algebraic_fiber_space_in_positive___characteristic?,
    AUTHOR = {Ejiri, S.},
     TITLE = {When is the {A}lbanese morphism an algebraic fiber space in
              positive characteristic?},
   JOURNAL = {Manuscripta Math.},
  FJOURNAL = {Manuscripta Mathematica},
    VOLUME = {160},
      YEAR = {2019},
    NUMBER = {1-2},
     PAGES = {239--264},
      ISSN = {0025-2611},
   MRCLASS = {14D06 (14G17)},
  MRNUMBER = {3983395},
MRREVIEWER = {Chen Jiang},
       DOI = {10.1007/s00229-018-1056-6},
       URL = {https://doi.org/10.1007/s00229-018-1056-6},
}

@article {Hacon_Witaszek_On_the_rationality_of_klt_sings_in_pos_char,
    AUTHOR = {Hacon, C. and Witaszek, J.},
     TITLE = {On the rationality of {K}awamata log terminal singularities in
              positive characteristic},
   JOURNAL = {Algebr. Geom.},
  FJOURNAL = {Algebraic Geometry},
    VOLUME = {6},
      YEAR = {2019},
    NUMBER = {5},
     PAGES = {516--529},
      ISSN = {2313-1691,2214-2584},
   MRCLASS = {14E30 (14G17 14J17)},
  MRNUMBER = {4009171},
MRREVIEWER = {Adrian\ Langer},
       DOI = {10.14231/ag-2019-023},
       URL = {https://doi.org/10.14231/ag-2019-023},
}

@article{Tanaka_Behavior_of_canonical_divisors_under_purely_inseparable_base_changes,
    AUTHOR = {Tanaka, H.},
     TITLE = {Behavior of canonical divisors under purely inseparable base
              changes},
   JOURNAL = {J. Reine Angew. Math.},
  FJOURNAL = {Journal f\"{u}r die Reine und Angewandte Mathematik. [Crelle's
              Journal]},
    VOLUME = {744},
      YEAR = {2018},
     PAGES = {237--264},
      ISSN = {0075-4102},
   MRCLASS = {14E30},
  MRNUMBER = {3871445},
MRREVIEWER = {Sung Rak Choi},
       DOI = {10.1515/crelle-2015-0111},
       URL = {https://doi.org/10.1515/crelle-2015-0111},
}

@article {Berthelot_Bloch_Esnault_On_Witt_vector_cohomology_for_singular_varieties,
    AUTHOR = {Berthelot, P. and Bloch, S. and Esnault, H.},
     TITLE = {On {W}itt vector cohomology for singular varieties},
   JOURNAL = {Compos. Math.},
  FJOURNAL = {Compositio Mathematica},
    VOLUME = {143},
      YEAR = {2007},
    NUMBER = {2},
     PAGES = {363--392},
      ISSN = {0010-437X},
   MRCLASS = {14F30 (11G25 14G05)},
  MRNUMBER = {2309991},
MRREVIEWER = {Nobuo Tsuzuki},
       DOI = {10.1112/S0010437X06002533},
       URL = {https://doi.org/10.1112/S0010437X06002533},
}

@article{Bhatt_Lurie_A_Riemann-Hilbert_correspondence_in_positive_characteristic,
    AUTHOR = {Bhatt, B. and Lurie, J.},
     TITLE = {A {R}iemann-{H}ilbert correspondence in positive
              characteristic},
   JOURNAL = {Camb. J. Math.},
  FJOURNAL = {Cambridge Journal of Mathematics},
    VOLUME = {7},
      YEAR = {2019},
    NUMBER = {1-2},
     PAGES = {71--217},
      ISSN = {2168-0930},
   MRCLASS = {14G17 (13A35 14F20)},
  MRNUMBER = {3922360},
MRREVIEWER = {Stefan Schr\"{o}er},
       DOI = {10.4310/CJM.2019.v7.n1.a3},
       URL = {https://doi.org/10.4310/CJM.2019.v7.n1.a3},
}

@article{Totaro_The_failure_of_Kodaira_vanishing_for_Fano_varieties__and_terminal___singularities_that_are_not_Cohen-Macaulay,
    AUTHOR = {Totaro, B.},
     TITLE = {The failure of {K}odaira vanishing for {F}ano varieties, and
              terminal singularities that are not {C}ohen-{M}acaulay},
   JOURNAL = {J. Algebraic Geom.},
  FJOURNAL = {Journal of Algebraic Geometry},
    VOLUME = {28},
      YEAR = {2019},
    NUMBER = {4},
     PAGES = {751--771},
      ISSN = {1056-3911},
   MRCLASS = {14F17 (14B05 14E30 14G17 14J45)},
  MRNUMBER = {3994312},
MRREVIEWER = {Zhiwei Wang},
       DOI = {10.1090/jag/724},
       URL = {https://doi.org/10.1090/jag/724},
}

@article{Yasuda_Discrepancies_of_p-cyclic_quotient_varieties,
    AUTHOR = {Yasuda, T.},
     TITLE = {Discrepancies of {$p$}-cyclic quotient varieties},
   JOURNAL = {J. Math. Sci. Univ. Tokyo},
  FJOURNAL = {The University of Tokyo. Journal of Mathematical Sciences},
    VOLUME = {26},
      YEAR = {2019},
    NUMBER = {1},
     PAGES = {1--14},
      ISSN = {1340-5705},
   MRCLASS = {14B05 (14E16 14E30)},
  MRNUMBER = {3929517},
MRREVIEWER = {Ana\ Bravo},
}

@article {Bernasconi_Non-normal_purely_log_terminal_centres,
    AUTHOR = {Bernasconi, F.},
     TITLE = {Non-normal purely log terminal centres in characteristic
              {$p\geqslant3$}},
   JOURNAL = {Eur. J. Math.},
  FJOURNAL = {European Journal of Mathematics},
    VOLUME = {5},
      YEAR = {2019},
    NUMBER = {4},
     PAGES = {1242--1251},
      ISSN = {2199-675X},
   MRCLASS = {14E30 (14J17)},
  MRNUMBER = {4015456},
MRREVIEWER = {Sho Tanimoto},
       DOI = {10.1007/s40879-018-00310-7},
       URL = {https://doi.org/10.1007/s40879-018-00310-7},
}

@article {Rulling_Chatzistamatiou_Hodge_Witt_and_Witt_rational,
    AUTHOR = {Chatzistamatiou, A. and R\"{u}lling, K.},
     TITLE = {Hodge-{W}itt cohomology and {W}itt-rational singularities},
   JOURNAL = {Doc. Math.},
  FJOURNAL = {Documenta Mathematica},
    VOLUME = {17},
      YEAR = {2012},
     PAGES = {663--781},
      ISSN = {1431-0635},
   MRCLASS = {14F43 (13F35 14B05)},
  MRNUMBER = {3001634},
MRREVIEWER = {Markus Szymik},
}

@article {Chambert_Loir_Points_Rationnels_et_Groupes_Fondamentaux,
    AUTHOR = {Chambert-Loir, A.},
     TITLE = {Points rationnels et groupes fondamentaux: applications de la
              cohomologie {$p$}-adique (d'apr\`es {P}. {B}erthelot, {T}.
              {E}kedahl, {H}. {E}snault, etc.)},
   JOURNAL = {Ast\'{e}risque},
  FJOURNAL = {Ast\'{e}risque},
    NUMBER = {294},
      YEAR = {2004},
     PAGES = {viii, 125--146},
      ISSN = {0303-1179},
   MRCLASS = {14G15 (11G25 14E20 14M20)},
  MRNUMBER = {2111642},
MRREVIEWER = {Burt Totaro},
}

@incollection {De_Fernex_Kollar_Xu_The_dual_complex_of_sings,
    AUTHOR = {de Fernex, T. and Koll{\'a}r, J. and Xu, C.},
     TITLE = {The dual complex of singularities},
 BOOKTITLE = {Higher dimensional algebraic geometry---in honour of
              {P}rofessor {Y}ujiro {K}awamata's sixtieth birthday},
    SERIES = {Adv. Stud. Pure Math.},
    VOLUME = {74},
     PAGES = {103--129},
 PUBLISHER = {Math. Soc. Japan, Tokyo},
      YEAR = {2017},
   MRCLASS = {14B05 (14D06 14J17)},
  MRNUMBER = {3791210},
MRREVIEWER = {Chen Jiang},
       DOI = {10.2969/aspm/07410103},
       URL = {https://doi.org/10.2969/aspm/07410103},
}

@article {Gongyo_Nakamura_Tanaka_Rational_points_on_log_Fano_threefolds_over_a_finite_field,
    AUTHOR = {Gongyo, Y. and Nakamura, Y. and Tanaka, H.},
     TITLE = {Rational points on log {F}ano threefolds over a finite field},
   JOURNAL = {J. Eur. Math. Soc. (JEMS)},
  FJOURNAL = {Journal of the European Mathematical Society (JEMS)},
    VOLUME = {21},
      YEAR = {2019},
    NUMBER = {12},
     PAGES = {3759--3795},
      ISSN = {1435-9855},
   MRCLASS = {14J45 (14E30 14G05)},
  MRNUMBER = {4022715},
MRREVIEWER = {Marta Pieropan},
       DOI = {10.4171/JEMS/913},
       URL = {https://doi.org/10.4171/JEMS/913},
}

@article {Gongyo_Takagi_Surfaces_of_gloally_F_regular_and_F_split_type,
    AUTHOR = {Gongyo, Y. and Takagi, S.},
     TITLE = {Surfaces of globally {$F$}-regular and {$F$}-split type},
   JOURNAL = {Math. Ann.},
  FJOURNAL = {Mathematische Annalen},
    VOLUME = {364},
      YEAR = {2016},
    NUMBER = {3-4},
     PAGES = {841--855},
      ISSN = {0025-5831},
   MRCLASS = {14J32 (13A35 14F18 14J17 14J45)},
  MRNUMBER = {3466854},
MRREVIEWER = {Karl Schwede},
       DOI = {10.1007/s00208-015-1238-4},
       URL = {https://doi.org/10.1007/s00208-015-1238-4},
}

@article {Patakfalvi_Zdanowicz_Ordinary,
    AUTHOR = {Patakfalvi, Zs. and Zdanowicz, M.},
     TITLE = {Ordinary varieties with trivial canonical bundle are not
              uniruled},
   JOURNAL = {Math. Ann.},
  FJOURNAL = {Mathematische Annalen},
    VOLUME = {380},
      YEAR = {2021},
    NUMBER = {3-4},
     PAGES = {1767--1799},
      ISSN = {0025-5831},
   MRCLASS = {14G17 (14J45 14M17 14M25)},
  MRNUMBER = {4297199},
MRREVIEWER = {Ana Bravo},
       DOI = {10.1007/s00208-021-02165-y},
       URL = {https://doi.org/10.1007/s00208-021-02165-y},
}

@article {Hacon_Witaszek_On_the_relative_MMP_for_threefolds_in_low_char,
    AUTHOR = {Hacon, C. and Witaszek, J.},
     TITLE = {On the relative minimal model program for threefolds in low
              characteristics},
   JOURNAL = {Peking Math. J.},
  FJOURNAL = {Peking Mathematical Journal},
    VOLUME = {5},
      YEAR = {2022},
    NUMBER = {2},
     PAGES = {365--382},
      ISSN = {2096-6075},
   MRCLASS = {14E30 (13A35 14J17)},
  MRNUMBER = {4492657},
MRREVIEWER = {Ziquan Zhuang},
       DOI = {10.1007/s42543-021-00037-7},
       URL = {https://doi.org/10.1007/s42543-021-00037-7},
}

@misc{stacks-project,
  shorthand    = {Stacks},
  author       = {The {Stacks Project Authors}},
  title        = {{Stacks Project}},
  howpublished = {\url{http://stacks.math.columbia.edu}},
  year         = {2026},
}

@article {Kollar_MMP_without_Q_factoriality,
    AUTHOR = {Koll{\'a}r, J.},
     TITLE = {Relative {MMP} without {$\mathbb{Q}$}-factoriality},
   JOURNAL = {Electron. Res. Arch.},
  FJOURNAL = {Electronic Research Archive},
    VOLUME = {29},
      YEAR = {2021},
    NUMBER = {5},
     PAGES = {3193--3203},
   MRCLASS = {14E30 (14F17 14J17)},
  MRNUMBER = {4342251},
MRREVIEWER = {Jihao Liu},
       DOI = {10.3934/era.2021033},
       URL = {https://doi.org/10.3934/era.2021033},
}

@article {Hacon_Witaszek_On_the_relative_MMP_for_4folds_in_pos_and_mixed_char,
    AUTHOR = {Hacon, C. and Witaszek, J.},
     TITLE = {On the relative minimal model program for fourfolds in
              positive and mixed characteristic},
   JOURNAL = {Forum Math. Pi},
  FJOURNAL = {Forum of Mathematics. Pi},
    VOLUME = {11},
      YEAR = {2023},
     PAGES = {Paper No. e10, 35},
   MRCLASS = {14E30 (14G17)},
  MRNUMBER = {4565409},
MRREVIEWER = {Artie Prendergast-Smith},
       DOI = {10.1017/fmp.2023.6},
       URL = {https://doi.org/10.1017/fmp.2023.6},
}

@article{Kollar_Witaszek_resolution_with_ample_divisor,
    AUTHOR = {Koll{\'a}r, J.
and Witaszek, J.},
     TITLE = {Resolution and alteration with ample exceptional divisor},
   JOURNAL = {Science China Mathematics},
      YEAR = {2024},
     MONTH = {Jan},
       DAY = {12},
  ABSTRACT = {In this note, we construct resolutions or regular alterations admitting an ample exceptional divisor, assuming the existence of projective resolutions or regular alterations. In particular, this implies the existence of such resolutions for three-dimensional schemes.},
      ISSN = {1869-1862},
       DOI = {10.1007/s11425-023-2249-3},
       URL = {https://doi.org/10.1007/s11425-023-2249-3},
}

@article{Baudin_Bernasconi_Kawakami_Frobenius_GR_fails,
    author = {J. Baudin and F. Bernasconi and T. Kawakami},
    title = "{The Frobenius--stable version of the Grauert--Riemenschneider vanishing theorem fails}",
	year = 2023,
    note =  "To appear in {J}ournal of {A}lgebraic {G}eometry. Available at \href{https://arxiv.org/abs/2312.13456}{arXiv:2102.13456v5}. ",
}

@article{Totaro_Terminal_3folds_that_are_not_CM,
    author = {B. Totaro},
    title = "{Terminal 3-folds that are not Cohen-Macaulay}",
	year = 2024,
    note =  "Available at \href{https://arxiv.org/abs/2407.02608}{arXiv:2407.02608v2}",
}

@article{Posva_Pathological_MMP_singularities_as_alpha_p_quotients,
    AUTHOR = {Posva, Q.},
     TITLE = {Pathological {MMP} singularities as {$\alpha_p$}-quotients},
   JOURNAL = {Forum Math. Sigma},
  FJOURNAL = {Forum of Mathematics. Sigma},
    VOLUME = {13},
      YEAR = {2025},
     PAGES = {Paper No. e185, 29},
      ISSN = {2050-5094},
   MRCLASS = {14J17 (13A35 14E30 14L30)},
  MRNUMBER = {4986561},
       DOI = {10.1017/fms.2025.10129},
       URL = {https://doi.org/10.1017/fms.2025.10129},
}

@article{Kollar_variants_of_normality_for_Noetherian_schemes,
    AUTHOR = {Koll{\'a}r, J.},
     TITLE = {Variants of normality for {N}oetherian schemes},
   JOURNAL = {Pure Appl. Math. Q.},
  FJOURNAL = {Pure and Applied Mathematics Quarterly},
    VOLUME = {12},
      YEAR = {2016},
    NUMBER = {1},
     PAGES = {1--31},
      ISSN = {1558-8599},
   MRCLASS = {14B05 (14E30)},
  MRNUMBER = {3613964},
MRREVIEWER = {Ilya Karzhemanov},
       DOI = {10.4310/PAMQ.2016.v12.n1.a1},
       URL = {https://doi.org/10.4310/PAMQ.2016.v12.n1.a1},
}

@article {Bernasconi_Kollar_Vanishing_theorems_for_threefolds_in_char_p,
    AUTHOR = {Bernasconi, F. and Koll{\'a}r, J.},
     TITLE = {Vanishing theorems for three-folds in characteristic
              {$p>5$}},
   JOURNAL = {Int. Math. Res. Not. IMRN},
  FJOURNAL = {International Mathematics Research Notices. IMRN},
      YEAR = {2023},
    NUMBER = {4},
     PAGES = {2846--2866},
      ISSN = {1073-7928},
   MRCLASS = {14F17 (14G17 14J30)},
  MRNUMBER = {4565629},
       DOI = {10.1093/imrn/rnab316},
       URL = {https://doi.org/10.1093/imrn/rnab316},
}

@book{Grothendieck_SGA_2,
    AUTHOR = {Grothendieck, A.},
     TITLE = {Cohomologie locale des faisceaux coh\'{e}rents et th\'{e}or\`emes de
              {L}efschetz locaux et globaux {$(SGA$} {$2)$}},
    SERIES = {Advanced Studies in Pure Mathematics, Vol. 2},
      NOTE = {Augment\'{e} d'un expos\'{e} par Mich\`ele Raynaud,
              S\'{e}minaire de G\'{e}om\'{e}trie Alg\'{e}brique du Bois-Marie, 1962},
 PUBLISHER = {North-Holland Publishing Co., Amsterdam; Masson \& Cie,
              Editeur, Paris},
      YEAR = {1968},
     PAGES = {vii+287},
   MRCLASS = {14B15 (14C20 14F20)},
  MRNUMBER = {476737},
}

@article{Bhatt_Scholze_The_pro-etale_topology_for_schemes,
    AUTHOR = {Bhatt, B. and Scholze, P.},
     TITLE = {The pro-\'{e}tale topology for schemes},
   JOURNAL = {Ast\'{e}risque},
  FJOURNAL = {Ast\'{e}risque},
    NUMBER = {369},
      YEAR = {2015},
     PAGES = {99--201},
      ISSN = {0303-1179},
      ISBN = {978-2-85629-805-3},
   MRCLASS = {14F05 (14F20 14F35 14H30 18B25)},
  MRNUMBER = {3379634},
MRREVIEWER = {Pieter Belmans},
}

@article{Raynaud_Anneaux_excellents,
     AUTHOR = {Raynaud, M. and Laszlo, Y.},
     TITLE = {Expos\'{e} {I}. {A}nneaux excellents},
      NOTE = {Travaux de Gabber sur l'uniformisation locale et la
              cohomologie \'{e}tale des sch\'{e}mas quasi-excellents},
   JOURNAL = {Ast\'{e}risque},
  FJOURNAL = {Ast\'{e}risque},
    NUMBER = {363-364},
      YEAR = {2014},
     PAGES = {1--19},
      ISSN = {0303-1179},
      ISBN = {978-2-85629-790-2},
   MRCLASS = {13F40},
  MRNUMBER = {3329770},
}

@article{Ranicki_Weiss_On_the_algebraic_L_theory_of_Delta-sets,
AUTHOR = {Ranicki, A. and Weiss, M.},
     TITLE = {On the algebraic {$L$}-theory of {$\Delta$}-sets},
   JOURNAL = {Pure Appl. Math. Q.},
  FJOURNAL = {Pure and Applied Mathematics Quarterly},
    VOLUME = {8},
      YEAR = {2012},
    NUMBER = {2},
     PAGES = {423--449},
      ISSN = {1558-8599},
   MRCLASS = {19G24},
  MRNUMBER = {2900173},
MRREVIEWER = {Ian Hambleton},
       DOI = {10.4310/PAMQ.2012.v8.n2.a3},
       URL = {https://doi.org/10.4310/PAMQ.2012.v8.n2.a3},
}

@book {Hatcher_algebraic_topology,
    AUTHOR = {Hatcher, A.},
     TITLE = {Algebraic topology},
 PUBLISHER = {Cambridge University Press, Cambridge},
      YEAR = {2002},
     PAGES = {xii+544},
      ISBN = {0-521-79160-X; 0-521-79540-0},
   MRCLASS = {55-01 (55-00)},
  MRNUMBER = {1867354},
MRREVIEWER = {Donald W. Kahn},
}

@article {Cascini_Tanaka_Purely_log_terminal_3folds_with_non_normal_centers_in_char_2,
    AUTHOR = {Cascini, P. and Tanaka, H.},
     TITLE = {Purely log terminal threefolds with non-normal centres in
              characteristic two},
   JOURNAL = {Amer. J. Math.},
  FJOURNAL = {American Journal of Mathematics},
    VOLUME = {141},
      YEAR = {2019},
    NUMBER = {4},
     PAGES = {941--979},
      ISSN = {0002-9327},
   MRCLASS = {14E30 (14B05 14G17)},
  MRNUMBER = {3992570},
MRREVIEWER = {Sho Tanimoto},
       DOI = {10.1353/ajm.2019.0025},
       URL = {https://doi.org/10.1353/ajm.2019.0025},
}

@article {Nayak_Compactification_for_essentially_finite_type_maps,
    AUTHOR = {Nayak, S.},
     TITLE = {Compactification for essentially finite-type maps},
   JOURNAL = {Adv. Math.},
  FJOURNAL = {Advances in Mathematics},
    VOLUME = {222},
      YEAR = {2009},
    NUMBER = {2},
     PAGES = {527--546},
      ISSN = {0001-8708},
   MRCLASS = {14A15},
  MRNUMBER = {2538019},
MRREVIEWER = {Jon Eivind Vatne},
       DOI = {10.1016/j.aim.2009.05.002},
       URL = {https://doi.org/10.1016/j.aim.2009.05.002},
}

@incollection {Polstra_Simpson_Tucker_On_F_pure_inversion_of_adjunction,
    AUTHOR = {Polstra, T. and Simpson, A. and Tucker, K.},
     TITLE = {On {$F$}-pure inversion of adjunction},
 BOOKTITLE = {Higher dimensional algebraic geometry---a volume in honor of
              {V}. {V}. {S}hokurov},
    SERIES = {London Math. Soc. Lecture Note Ser.},
    VOLUME = {489},
     PAGES = {319--344},
 PUBLISHER = {Cambridge Univ. Press, Cambridge},
      YEAR = {2025},
      ISBN = {978-1-009-39624-0},
   MRCLASS = {13A35},
  MRNUMBER = {4844638},
MRREVIEWER = {Cheng\ Meng},
}

@article {Elkik_Rationalite_des_singularites_canoniques,
    AUTHOR = {Elkik, R.},
     TITLE = {Rationalit\'{e} des singularit\'{e}s canoniques},
   JOURNAL = {Invent. Math.},
  FJOURNAL = {Inventiones Mathematicae},
    VOLUME = {64},
      YEAR = {1981},
    NUMBER = {1},
     PAGES = {1--6},
      ISSN = {0020-9910},
   MRCLASS = {14B05 (14J30)},
  MRNUMBER = {621766},
MRREVIEWER = {G. Horrocks},
       DOI = {10.1007/BF01393930},
       URL = {https://doi.org/10.1007/BF01393930},
}

@article {Kollar_Xu_The_dual_complex_of_Calabi_Yau_pairs,
    AUTHOR = {Koll{\'a}r, J. and Xu, C.},
     TITLE = {The dual complex of {C}alabi-{Y}au pairs},
   JOURNAL = {Invent. Math.},
  FJOURNAL = {Inventiones Mathematicae},
    VOLUME = {205},
      YEAR = {2016},
    NUMBER = {3},
     PAGES = {527--557},
      ISSN = {0020-9910},
   MRCLASS = {14J17 (14D06 14F35 14J32)},
  MRNUMBER = {3539921},
MRREVIEWER = {Alan Matthew Thompson},
       DOI = {10.1007/s00222-015-0640-6},
       URL = {https://doi.org/10.1007/s00222-015-0640-6},
}

@article {Yobuko_Quasi_F_splitting_and_lifting_of_CY,
    AUTHOR = {Yobuko, F.},
     TITLE = {Quasi-{F}robenius splitting and lifting of {C}alabi-{Y}au
              varieties in characteristic {$p$}},
   JOURNAL = {Math. Z.},
  FJOURNAL = {Mathematische Zeitschrift},
    VOLUME = {292},
      YEAR = {2019},
    NUMBER = {1-2},
     PAGES = {307--316},
      ISSN = {0025-5874},
   MRCLASS = {14J32 (13F35 14G17)},
  MRNUMBER = {3968903},
MRREVIEWER = {Tyler L. Kelly},
       DOI = {10.1007/s00209-018-2198-7},
       URL = {https://doi.org/10.1007/s00209-018-2198-7},
}

@article{Mauri_Mazzon_Stevenson_On_the_geometric_P=W_conjecture,
    AUTHOR = {Mauri, M. and Mazzon, E. and Stevenson, M.},
     TITLE = {On the geometric {${\rm P}={\rm W}$} conjecture},
   JOURNAL = {Selecta Math. (N.S.)},
  FJOURNAL = {Selecta Mathematica. New Series},
    VOLUME = {28},
      YEAR = {2022},
    NUMBER = {3},
     PAGES = {Paper No. 65, 45},
      ISSN = {1022-1824},
   MRCLASS = {14H60 (14D20 14E30 14M35 32P05 32S35 53D12)},
  MRNUMBER = {4418388},
MRREVIEWER = {Dmitry Kerner},
       DOI = {10.1007/s00029-022-00776-0},
       URL = {https://doi.org/10.1007/s00029-022-00776-0},
}

@article {Hacon_Pat_GV_Characterization_Ordinary_AV,
    AUTHOR = {Hacon, C. and Patakfalvi, Zs.},
     TITLE = {Generic vanishing in characteristic {$p>0$} and the
              characterization of ordinary abelian varieties},
   JOURNAL = {Amer. J. Math.},
  FJOURNAL = {American Journal of Mathematics},
    VOLUME = {138},
      YEAR = {2016},
    NUMBER = {4},
     PAGES = {963--998},
      ISSN = {0002-9327},
   MRCLASS = {14F17 (14G17 14K05)},
  MRNUMBER = {3538148},
MRREVIEWER = {Thomas Kr\"{a}mer},
       DOI = {10.1353/ajm.2016.0031},
       URL = {https://doi.org/10.1353/ajm.2016.0031},
}

@article {Baudin_Grauert-Riemenschneider_vanishing_for_Witt_canonical_sheaves,
    AUTHOR = {Baudin, J.},
     TITLE = {A {G}rauert--{R}iemenschneider vanishing theorem for {W}itt canonical sheaves},
     note =  "Available at \href{https://arxiv.org/abs/2506.14647}{arXiv:2506.14647v1}",
     YEAR = {2025},
}

@article {Krah_Vial_Proper_spliters_in_positive_characteristic,
    AUTHOR = {Krah, J. and Vial, C.},
     TITLE = {On proper splinters in positive characteristic},
   JOURNAL = {Compos. Math.},
  FJOURNAL = {Compositio Mathematica},
    VOLUME = {161},
      YEAR = {2025},
    NUMBER = {10},
     PAGES = {2493--2544},
      ISSN = {0010-437X,1570-5846},
   MRCLASS = {14G17 (14F08 14F35 14J17 14L30)},
  MRNUMBER = {5002150},
       DOI = {10.1112/S0010437X25102418},
       URL = {https://doi.org/10.1112/S0010437X25102418},
}

@article {Arvidsson_Bernasconi_Lacini_On_the_Kawamata-Viehweg_vanishing_theorem_for_log_del_Pezzo_surfaces_in_positive_characteristic,
    AUTHOR = {Arvidsson, E. and Bernasconi, F. and Lacini, J.},
     TITLE = {On the {K}awamata-{V}iehweg vanishing theorem for log del
              {P}ezzo surfaces in positive characteristic},
   JOURNAL = {Compos. Math.},
  FJOURNAL = {Compositio Mathematica},
    VOLUME = {158},
      YEAR = {2022},
    NUMBER = {4},
     PAGES = {750--763},
      ISSN = {0010-437X,1570-5846},
   MRCLASS = {14E30 (14F17 14G17 14J17)},
  MRNUMBER = {4438290},
MRREVIEWER = {Ana\ Bravo},
       DOI = {10.1112/S0010437X22007394},
       URL = {https://doi.org/10.1112/S0010437X22007394},
}
\end{document}